\newcommand{\comments}[1]{}
\renewcommand{\leq}{\leqslant}
\renewcommand{\bar}{\overline}
\renewcommand{\geq}{\geqslant}
\newcommand{\w}{\omega}
\newcommand{\al}{\alpha}
\newcommand{\be}{\beta}
\newcommand{\g}{\gamma}
\newcommand{\de}{\delta}
\newcommand{\e}{\epsilon}
\newcommand{\si}{\sigma}
\newcommand{\Z}{\mathbb{Z}}
\newcommand{\R}{\mathbb{R}}
\newcommand{\N}{\mathbb{N}}
\newcommand{\A}{\mathcal{A}}
\newcommand{\B}{\mathcal{B}}
\newtheorem{thm}{Theorem}
\newtheorem{propn}[thm]{Proposition}
\newtheorem{lem} [thm]{Lemma}
\newtheorem{cor}[thm]{Corollary}
\theoremstyle{definition}
\newtheorem{defn}[thm]{Definition}
\newtheorem{xmpl}[thm]{Example}
\newtheorem{nexmpl}[thm]{Counterexample}
\newtheorem{hyp}{Hypothesis}
\newtheorem*{ackn}{Acknowledgements}
\theoremstyle{remark}
\newtheorem{rmk}[thm]{Remark}
\newtheorem{nt}[thm]{Note}
\newtheorem{ntn}[thm]{Notation}
\title{The Ellis Semigroup of Certain Constant Length Substitutions}
\author{Petra Staynova}
\date{}
\begin{document}

\maketitle

\begin{abstract}
In this article, we calculate the Ellis semigroup of a certain class of constant length substitutions. 
This generalises a result of Haddad and Johnson \cite{HaddadJohnson} from the binary case to substitutions over arbitrarily large finite alphabets. 
Moreover, we provide a class of counter-examples to one of the propositions in their paper, which is central to the proof of their main theorem. 
We give an alternative approach to their result, which centers on the properties of the Ellis semigroup. 
To do this, we also show a new way to construct an AI tower to the maximal equicontinuous factor of these systems, which gives a more particular approach than the one given by Dekking \cite{Dekking}. 
\end{abstract}

\section{Introduction}

The Ellis semigroup has shown to be a useful tool in topological dynamics. 
Its machinery has helped provide significantly shorter proofs of several beautiful theorems, in particular the theorems of of  Auslander-Ellis \cite{EllisSquared} and  Maliutin and Margulis \cite{EliShorterProofs}. 
In spite of its usefulness and academic interest, specific calculations and description of the Ellis semigroup remain  rare. 
The few examples include those given by Namioka \cite{Namioka31}, Milnes \cite{Milnes29} and \cite{Milnes30}, Glasner \cite{Glasner16} and \cite{Glasner20}, Berg, Gove and Haddad \cite{BergGoveHadad4}, Haddad and Johnson \cite{HaddadJohnson}, Budak, Isik, Milnes and Pym \cite{BIMP9}, Glasner and Megrelishvili \cite{GlasMegHNS}, and, most recently, Glasner and Glasner \cite{GlasnerPreprint}. 
Another recent example is Marcy Barge's calculation of the Ellis semigroup of the Thue-Morse system \cite{Barge_TM}, which involves various auxiliary codings and Bratteli diagrams.

In this article, we calculate the Ellis semigroup of a certain class of constant length substitutions over arbitrary alphabets. This generalises both Barge's result and an earlier result by Haddad and Johnson \cite{HaddadJohnson}, as well as fixing an error in the latter's proof. 
In their paper \cite{HaddadJohnson}, Haddad and Johnson prove that the Ellis semigroup of any generalised Morse sequence has two minimal ideals with two idempotents each. 
Their main technique uses the theory around IP cluster points and IP sets. 
They first compute the idempotents in the case of $\N$-cascades and use the fact that the closure of the set of idempotents is precisely the set of IP cluster points, so when this set is finite every IP cluster point is an idempotent. 
Then they use a technical proposition, which they give without proof, to extend this computation of the IP cluster points of the $\N$-cascade to the $\Z$-cascade case. 
We provide a counterexample to this key proposition, and generalize their main theorem without using IP cluster points. 
To do this, we combine some ideas from \cite{HaddadJohnson} with  properties of the Ellis semigroup from \cite{EllisSquared} and a new approach to the construction of a certain AI extension using notions from \cite{Martin}. 
Combining this with the result of Coven and Keane about the equicontinuous tower structure of constant length binary substitution systems, we give a complete characterization of the minimal ideals and idempotents in the Ellis semigroup of such a system. 

Our construction of an AI extension to the maximal equicontinuous factor of a substitution system is interesting in its own right, as it gives an explicit intermediate substitution system and a sliding block code from the main space to the intermediate space. 

The article is organised as follows. 
In Sections \ref{background_section} and \ref{Ellis_section}, we give the necessary background, definitions and notation from substitution dynamical systems, as well as the theory around the Ellis semigroup. 
In Section \ref{Factorizations_Chapter}, we give a detailed factorization of certain constant length substitutions onto their maximal equicontinuous factor, following in part \cite{Martin}. However, the description of the intermediate space as a substitution system is new. 
In Section \ref{Ellis_calculation_section}, we calculate the Ellis semigroup for the substitution systems from the previous section. 
Finally, in Section \ref{Counterexample_Section}, we show that in fact every continuous binary substitution is a counter-example to Haddad and Johnson's key proposition. 

\section{Substitution Dynamics Background} \label{background_section}

In this section, we will list necessary facts from substitution dynamical systems. These can be found in standard literature on the subject, for example \cite{BaakeGrimmPenrose}, \cite{JdV}, \cite{PytheasFogg}.

Since our counterexample to Haddad and Johnson's proposition, which we give in Section \ref{Counterexample_Section}, will require some specific properties of binary words, we will intersperce comments for the binary case whenever needed in the below discussion. 
We will also use the Thue-Morse substitution, $0\mapsto0110$, $1\mapsto1001$, as a running example. 

We presume $0\in\N$.

By a \emph{dynamical system $(X,T)$}, we mean a compact Hausdorff space $X$ together with a semigroup $T$ whose elements $t:X\rightarrow X$ are continuous surjections. 
Moreover, when $T$ is a group, we require for all $t\in T$ to be homeomorphisms. 
Sometimes, a dynamical system with acting (semi-)group $\N$ or $\Z$ is called an \emph{$\N$- (respectively, $\Z$-) cascade}, and denoted $(X,f)$, where $f$ is the map whose iterations represent actions of the natural numbers (resp, integers). 
A dynamical system is \emph{minimal} if and only if it has no nonempty closed set which is invariant under $T$, i.e. there is no set $\emptyset\neq A\subset X$ such that $\cup\{t(A):t\in T\}\subseteq A$. 

Sometimes we will write `system' instead of `dynamical system', for short. Moreover, we will sometimes use the more general term `dynamical systems' for cascades, where it is clear that the acting group is $\N$ or $\Z$. 

Let $(X,f)$ be a $\Z$-cascade and let $x,y$ be points in $X$. We say $x$ and $y$ are \emph{positively (resp, negatively) asymptotic} if and only if $\lim_{n\rightarrow+\infty} f^n (x)=\lim_{n\rightarrow+\infty} f^n(y)$ (respectively, $\lim_{n\rightarrow-\infty} f^n (x)=\lim_{n\rightarrow-\infty} f^n(y)$).

Let $(X,T)$ be a dynamical system. 
We call two points $x,y\in X$ \emph{proximal} if and only if there is a point $z\in X$, and a net  $\{t_\al\}_{\al\in A}$, such that $\lim t_\al (x)=\lim t_\al (y) = z$. 
A point $x\in X$ is called \emph{distal} if and only if whenever $y\in X$ is proximal to $x$, then $y=x$. 
If every point $x\in X$ is distal, we call the whole system $(X,T)$ \emph{distal}.

An important object associated to a minimal dynamical system is the maximal equicontinuous factor. In some informal sense, it shows the system's simplest underlying structure. 
We give the necessary definitions to introduce it rigorously.

We say that a dynamical system $(Y,T)$ is an \emph{extension} of a dynamical system $(X,T)$ if and only if there exists an onto continuous map $\pi:Y\rightarrow X$ such that $t\circ \pi=\pi\circ t$ for all $t\in T$. 
In this case, we call $(X,T)$ a \emph{factor} of $(Y,T)$, and the map $\pi$ will be called a \emph{factor map}. 
This extension is \emph{almost one-to-one} (or almost automorphic) if and only if the restriction of $\pi$ to a residual set is one-to-one. 

An extension is called \emph{almost $k$-to-one} if and only if there $\inf_{y\in Y}|\pi^{-1}(y)|=k$, in other words, if $k$ is the minimal cardinality of a fiber of $\pi$, and moreover the set $\{x\in X:|\pi^{-1}(\pi(x))|=k\}$ is residual.

A dynamical system $(X,T)$ is called \emph{equicontinuous} if and only if  $X$ is a metric space (with metric $d$), and  for all $\e>0$ there exists $\de>0$ such that if $d(x,y)<\de$ then $d(tx,ty)<\e$ for all $t\in T$.

We may now give the definition of the maximal equicontinuous factor.

A dynamical system $(Y,T)$ is the \emph{maximal equicontinuous factor} of a system $(X,T)$ if and only if $(Y,T)$ is an equicontinuous factor of $(X,T)$ and whenever $(Z,T)$ is an equicontinuous factor of $(X,T)$, then $(Z,T)$ is also a factor of $(Y,T)$.

By an application of Zorn's Lemma, one can show that the maximal equicontinuous factor of a minimal dynamical system always exists. 

In this paper, we consider specific types of cascades, called substitution systems. We now introduce the constructions of our underlying compact metric spaces and the map which acts on them. 
Again, we point out this material can be found in standard books on the subject, for example \cite{BaakeGrimmPenrose} or \cite{JdV}.

We consider sequences of \emph{letters} over the  \emph{alphabet} $\A=\{0,1,\ldots,n\}$. 
We denote by $\A^{<\N}$ the set of finite blocks, where by a \emph{block} we mean a sequence $B=b_0\ldots b_m$ formed by concatenation of letters in the alphabet. 
We will denote the \emph{length} of such a block $B=b_0\ldots b_m$ by $|B|=m+1$, and for a letter $a\in \A$, we will denote by $|B|_a$ the number of occurrences of the letter $a$ in the block $B$. 
We write $\A^{=n}$ for the set of all finite blocks in $\A^{<\N}$ of length $n$. 
For a block $B=b_0\ldots b_m$ from the binary alphabet $\A=\{0,1\}$, we will denote by $\bar{B}$ the \emph{dual} of $B$, obtained from $B$ by interchanging the zeroes and ones. 

We define the set $\A^\N$ as the set of all \emph{right-infinite words} with elements $w=w_0w_1\ldots$, and $\A^\Z$ as the set of all \emph{bi-infinite} words over $\A$ with elements $w=\ldots w_{-1}\cdot w_0 w_1\ldots$. 
Note that for (bi-) infinite words, the first letter after the `center dot' is indexed with $0$, not $1$. 
The spaces $\A^\N$, $\A^\Z$ are endowed with a natural metric $d$ defined as $d(x,y):=2^{-k}$, where $k=\min\{n\in\N: x_n\neq y_n\}$, respectively, $k=\min\{n\in\N: x_n\neq y_n \textrm{ or } x_{-n+1}\neq y_{-n+1}\}$. 
For a (bi-) infinite word $z$, and for $m\in \Z$ and $n\in \N$, denote $z[m,n]:=z_m z_{m+1}\ldots z_{m+n-1}$ with $z[m,1]=z[m]=z_m$.

A (bi-) infinite word $w$ (over a finite alphabet) is called \emph{uniformly recurrent} when every finite subword of $w$ reappears in $w$ with bounded gaps. Since in what follows, the only type of recurrence we consider is uniform, we will often drop the qualifier `uniform', and write `recurrent' for `uniformly recurrent'.

Let $\si$ be the \emph{shift map} defined on $\A^\N$ or $\A^\Z$ by $\si(x)_n=x_{n+1}$ for $x$ in $ \A^\N$ or in $\A^\Z$ and $n$ an integer or natural number, respectively. 
It is easy to check that $\si$ is a contiuous surjection, and that $\si:\A^\Z\rightarrow\A^\Z$ is 1-1 and invertible with continuous inverse.

For a word $x$ in $\A^\N$ or $\A^Z$, we define the \emph{shift orbit} of $x$ by 
$O_x:=\{\si^n(x):n\in\N\}\subset \A^\N$, respectively $O_x:=\{\si^n(x):n\in\Z\}\subset\A^\Z$. 
For a word $x$ in $\A^\N$ or $\A^\Z$, we define the \emph{shift-orbit closure} as $\overline{O_x}\subset\A^\N$, respectively $\overline{O_x}\subset\A^\Z$. 

The shift orbit closure is a closed subset of $\A^\Z$ (or $\A^\N$), which is invariant under the shift operator. 
Hence, $(\overline{O_x}, \si)$ is a cascade. 
We will mostly concern ourselves with systems in $\A^\Z$ arising from a special class of bi-infinite words $x$.

A map $\theta:\A\rightarrow\A^{<\N}$ is called a \emph{substitution}. 
For a substitution $\theta$, there is at least one $\theta$-periodic point, i.e. a word $w\in\A^\N$ such that for some $n\in\N^+$, $\theta^n(w)=w$. 
Without loss of generality in what follows, we may consider $\theta^n$ instead of $\theta$, so instead of `$\theta$-periodic', we will call such a $w$ a \emph{fixed point} of $\theta$. 
We say a substitution $\theta$ is \emph{$\si$-aperiodic} if  its fixed point is not a periodic word under the shift and moreover $\theta(a)\neq \theta(b)$ for all letters $a\neq b$. 
If there is a number $n\in\N^+$ such that for all letters $a$, $|\theta(a)|=n$, we say $\theta$ is of \emph{constant length} and call $n$ its \emph{length}. 
Following the terminology of Coven and Keane \cite{CovenKeane}, if  $\theta$ is a binary substitution which is $\si$-aperiodic and of constant length, and $\theta(0)=\bar{\theta(1)}$, we say $\theta$ is a \emph{continuous substitution}.

\begin{xmpl}
The \emph{Thue-Morse substitution} is a continuous substitution defined by the rule 
\begin{align*}
\theta(0)=01,\\
\theta(1)=10.
\end{align*}
\end{xmpl}

\begin{hyp}\label{Hyp_1}
From now on, let $\theta$ be an aperiodic substitution of constant length $r$ over the alphabet $\A$. 
\end{hyp}

From now on, let $x\in\A^\Z$ be a fixed point of $\theta$, and let $X_\theta$ be the orbit closure of $x$ in $\A^\Z$, i.e. $X_\theta=\overline{O_x}$. 
It is well-known that for primitive substitutions $\theta$, the space $X_\theta$ does not depend on the choice of fixed points $w\in\A^\N$ and $x\in\A^\Z$. 
Then $(X_\theta,\si)$ is the unique substitution cascade associated with $\theta$. 

When $\theta$ is a continuous (in particular, binary) substitution, we will without loss of generality assume that the first letter of $\theta (0)$ is 0, respectively, the first letter of $\theta(1)$ would be $1$. In the next page, we will generalise this assumption to substitutions on more than two letters. 
We will denote the four bi-infinite fixed points of the binary $\theta$ by $v, w, \bar{v}, \bar{w}$, where $v_0=w_0=1$ and $v_{-1}=\bar{w}_{-1}=1$. 

\begin{xmpl}
According to the above remarks, we may without loss of generality consider the square of the Thue-Morse substitution, $\theta(0)=0110, \ \theta(1)=1001$. 
The  words denoted by $v,w,\bar{v}$ and $\bar{w}$ associated with it would be:
\begin{align*}
v&= \ldots 1001\cdot1001\ldots\\
w&=\ldots 0110\cdot1001\ldots\\
\bar{v}&=\ldots 0110\cdot 0110\ldots\\
\bar{w}&=\ldots 1001\cdot0110\ldots
\end{align*}
\end{xmpl}

The proximal pairs amongst the four fixed points of the Thue-Morse substitution can be illustrated as follows: 

\begin{center}
\includegraphics[scale=0.4]{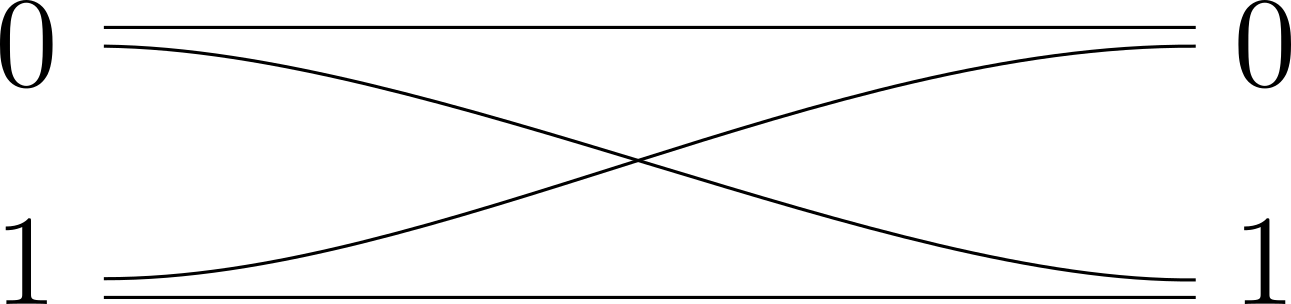}
\end{center}

\begin{defn}[legal letter words, $P_\theta$, basic $r^k$-block] \label{basic_block_defn}
Let $\theta$ be a length $r$ substitution. 
A finite word $A\in\A^{<\N}$ is called \emph{$\theta$-legal} if and only if there is a word $y\in X_\theta$ such that $A$ appears in $y$. 
We denote by $P_\theta$ the set of all $\theta$-legal two-letter words. 
For $k\in\N^+$, we call a word $B$ of length $r^k$ a \emph{basic $r^k$-block} if and only if there is a letter $a\in\A$ such that $\theta^k(a)=B$. 
\end{defn}

Note that every basic block is legal, but not every legal word is a basic block. 

\begin{xmpl}
For the Thue-Morse substitution, we have  $P_\theta=\{00,01,10,11\}$. 
The legal four letter words are $\{0010, 0011, 0100, 0101, 0110, 1001, 1010, 1011, 1100, 1101\}$, but only $0110$ and $1001$ are basic $r$-blocks. 
\end{xmpl}

When working with alphabets with more than two letters, it is useful to consider a notion similar to the property of continuous substitutions. Following the terminology of \cite{BaakeGrimmPenrose}, we call a substitution $\theta$ \emph{bijective} if and only if for all letters $a\neq b$, for all $n\in\{0,\ldots,r-1\}$, we have $\theta(a)_n\neq\theta(b)_n$. We note that some other sources (eg \cite{blanchard2004constant}) call such substitutions `coincidence-free'.

Intuitively, this means that if we write out all possible values of $\theta(a)$ for $a\in\A$ one above the other, every `column' will contain every letter of the alphabet without repetition. 
In particular, every continuous substitution is bijective. 

\begin{xmpl}\label{xmpl2}
Consider the bijective substitution $\phi$ on the six-letter alphabet:
\begin{align*}
\phi(0)&=0120, & \phi(1)&=1501,\\
\phi(2)&=2342, & \phi(3)&=3453,\\
\phi(4)&=4234, & \phi(5)&=5015.
\end{align*}
We have that $P_\phi=\{01,12,15,20,23,34,42,45,50,53\}$. 
\end{xmpl}

The proximal pairs amongst the fixed points of this substitution can be illustrated as:

\begin{center}
\includegraphics[scale=0.4]{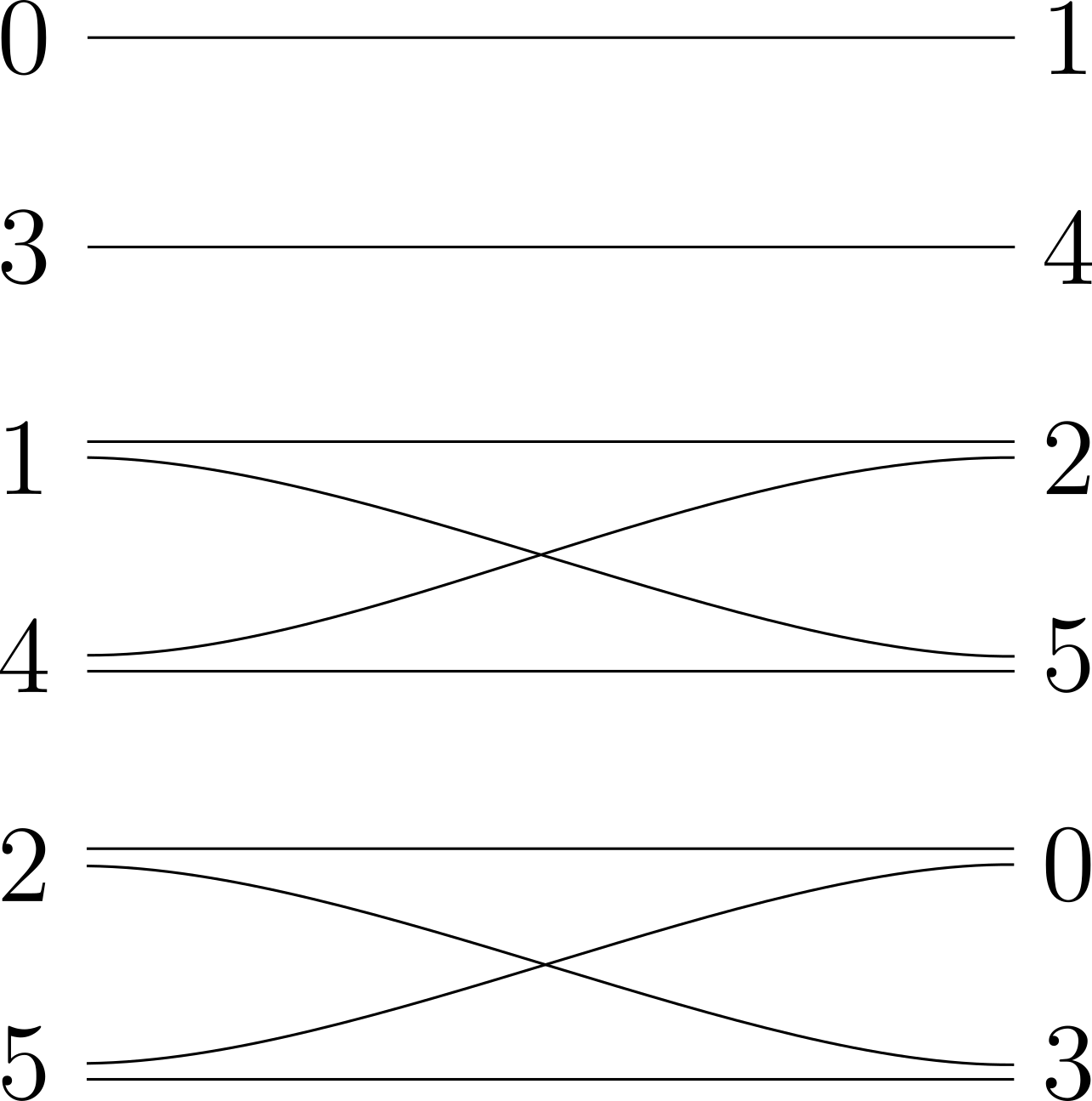}
\end{center}

We note that for each constant length bijective substitution $\theta$, there is a power $n\in\N^+$ such that for any letter $a$, $\theta^n(a)_0=\theta^n(a)_{r-1}=a$. 

\begin{hyp}\label{Hyp_2}
From now on, without loss of generality, assume any bijective $\theta$ is already in this standard form. 
\end{hyp}
We will consider special factor maps between substitution systems, called sliding block codes. 
For the background on this, we refer the reader to \cite{LindMarcus}.

Let $x\in\A^\Z$ and consider $\overline{O_x}$. 
Let $w=\ldots w_{-1}\cdot w_0w_1\ldots$ be a word in $\overline{O_x}$. 
We can transform $w$ into a new sequence $v=\ldots v_{-1}\cdot v_0v_1\ldots$ over another alphabet $\B$ as follows. 
Fix integers $m$ and $n$ with $-m\leq n$. 
To compute the $i$th coordinate $v_i$ of the transformed sequence, we use a function $\Phi$ that depends on the ``window'' of coordinates of $w$ from $i-m$ to $i+n$. 
Here $\Phi:\A^{=m+n+1}\rightarrow\B$ is a fixed \emph{block map}, called an $(m+n+1)$-\emph{block map} from $(m+n+1)$-blocks in $\A^{<\N}$ to symbols in $\B$, and so 
\begin{equation}\label{1-5-1}
v_i=\Phi(w_{i-m} w_{i-m+1}\ldots w_{i+n})=\Phi(w[i-m,i+n]). 
\end{equation}
Now, let $x\in\A^\Z$, consider $\overline{O_x}$, and let $\Phi:\A^{=n+m+1}\rightarrow\B$ be a block map. 
Then the map $\phi:\overline{O_x}\rightarrow\B^\Z$ defined by $v=\phi(w)$ with $v_i$ given by \eqref{1-5-1} is called the \emph{sliding block code} with \emph{memory} $m$ and \emph{anticipation $n$ induced by $\Phi$}. 
We will denote the formation of $\phi$ from $\Phi$ by $\phi=\Phi_\infty^{[-m,n]}$, or more simply by $\phi=\Phi_\infty$ if the memory and anticipation of $\phi$ are understood. 
If $Y$ is a shift-invariant system contained in $\A^\Z$ and $\phi(X)\subseteq Y$, we write $\phi:\overline{O_x}\rightarrow Y$.

To check that a given homomorphism between symbolic cascades is indeed a sliding block code, we have the following classical theorem of Curtis, Hedlund and Lyndon:

\begin{thm}[\cite{CurtHedLynd}] \label{SBCMarcusLind} 
Let $X$ and $Y$ be shift-invariant spaces over alphabets $\A$, $\B$ respectively. 
A map $\phi:X\rightarrow Y$ is a sliding block code if and only if $\phi\circ s _X= s _Y\circ \phi$ and there exists $N\geq0$ such that $\phi(w)_0$ is a function of $w[-N,N]$. 
\end{thm}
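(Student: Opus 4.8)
The plan is to prove the two implications separately, treating the forward direction as a routine verification straight from the definition of a sliding block code, and reserving the real content for the converse, where the shift-commuting hypothesis must be used to promote a single ``central-coordinate'' rule into a genuine sliding block code.

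For the forward implication I would suppose $\phi=\Phi_\infty^{[-m,n]}$ for a block map $\Phi\colon\B(m+n+1)\to\A$, so that $\phi(w)_i=\Phi(w[i-m,i+n])$ by \eqref{1-5-1}. First I would check that $\phi$ commutes with the shift: unwinding the definitions, $\phi(s_X w)_i=\Phi\big((s_X w)[i-m,i+n]\big)=\Phi(w[i-m+1,i+n+1])$, while $(s_Y\phi(w))_i=\phi(w)_{i+1}=\Phi(w[i-m+1,i+n+1])$, and since these agree in every coordinate $i$ we obtain $\phi\circ s_X=s_Y\circ\phi$. Then, taking $N=\max(m,n)$, the window $w[-m,n]$ is a subblock of $w[-N,N]$, so $\phi(w)_0=\Phi(w[-m,n])$ is visibly a function of $w[-N,N]$. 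This settles one direction with no real difficulty.

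For the converse I would assume $\phi\circ s_X=s_Y\circ\phi$ and that there is some $N\geq0$ together with a map $\Psi$ on allowed $(2N+1)$-blocks of $X$ such that $\phi(w)_0=\Psi(w[-N,N])$. The key step is to iterate the commuting relation: from $\phi\circ s_X=s_Y\circ\phi$ one gets $\phi\circ s_X^{i}=s_Y^{i}\circ\phi$ for every $i\geq0$ by induction, and hence for all $i\in\Z$, using that $s_X$ and $s_Y$ are invertible homeomorphisms (being restrictions of the invertible shift $\si$). With this in hand I would compute, for arbitrary $i$,
\[
\phi(w)_i=(s_Y^{i}\phi(w))_0=(\phi(s_X^{i}w))_0=\Psi\big((s_X^{i}w)[-N,N]\big)=\Psi(w[i-N,i+N]),
\]
which exhibits $\phi$ as the sliding block code $\Psi_\infty^{[-N,N]}$ with memory and anticipation both equal to $N$. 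Note that $\phi(X)\subseteq Y$ is given, so the resulting code indeed maps into $Y$. The conceptual heart is precisely this ``sliding'' observation: commutation with the shift forces the local rule at coordinate $0$ to be the very same rule applied at every coordinate, so a single central-window function automatically determines $\phi$ everywhere.

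I expect the only delicate point to be bookkeeping with the index conventions — making sure $(s_X^{i}w)[-N,N]$ is correctly identified with $w[i-N,i+N]$, and that the memory and anticipation are read off in the right order — rather than anything substantive; once the commuting relation has been iterated, the remainder is formal. It is also worth observing that the hypotheses presuppose neither continuity of $\phi$ nor shift-invariance of its codomain beyond $\phi(X)\subseteq Y$: the shift-commutation alone supplies all the global structure needed.
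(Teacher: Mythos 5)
Your argument is correct and complete in both directions; the paper itself states this result without proof, importing it verbatim from Lind--Marcus, and your converse (iterating $\phi\circ s_X=s_Y\circ\phi$ to transport the central-window rule $\Psi$ to every coordinate, giving $\phi=\Psi_\infty^{[-N,N]}$) is exactly the standard argument for Proposition 1.5.8 in that reference. The only hypothesis you should make explicit is that the shifts are invertible (true here since the paper's sliding block codes live on subshifts of $\A^\Z$), which you already flag when passing from $i\geq 0$ to all $i\in\Z$.
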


\section{The Ellis Semigroup} \label{Ellis_section}

In this section, we list some standard and needed results about the Ellis semigroup. 
The main source used is \cite{EllisSquared}.

For a dynamical system $(X,T)$, we consider $X^X$ with the Tychonoff topology and define the \emph{Ellis semigroup} (also know as the enveloping semigroup) as $$
E(X,T):=\overline{
\{f\in X^X:\exists t\in T \textrm{ such that }t(x)=f(x)\ \forall x\in X\}
}\subset X^X.
$$

When there is no ambiguity about the acting semigroup, we will write just $E(X)$.

The set $E(X,T)$ is a semigroup with respect to composition of functions. 
By a lemma of Ellis and Numakura \cite{Numakura, Ellis1960} and an application of Zorn's lemma, Ellis shows that there are minimal ideals in $E(X,T)$. 

How do the Ellis semigroups behave under homomorphisms? 
\begin{propn}[ {\cite[Proposition 2.10]{EllisSquared} } ]\label{Propn2.10}
Let $\pi:(X,T)\rightarrow (Y,T)$ be a surjective homomorphism of dynamical systems. 
There exists a unique map $\pi^*:E(X)\rightarrow E(Y)$ such that $\pi^*$ is surjective and continuous, $\pi^*(pq)=\pi^*(p)\pi^*(q)$ for all $p,q\in E(X)$, and such that for any $x_0$ in $X$, the following diagram is commutative:
$$ \begin{tikzcd}
E(X) \arrow{r}{\pi^*}[swap]{p\mapsto q} \arrow[swap]{d}{p \mapsto p x_0} & E(Y) \arrow{d}{q\mapsto q\pi(x_0)} \\%
X \arrow{r}{\pi}[swap]{x_0\mapsto \pi(x_0)} & Y
\end{tikzcd}
$$
\end{propn}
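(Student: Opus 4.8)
The plan is to let the commutative square dictate the only possible definition of $\pi^*$ and then verify that it has all the listed properties. Since $\pi$ is onto, every $y\in Y$ has the form $y=\pi(x)$, and commutativity of the diagram forces $\pi^*(p)(\pi(x))=\pi(p(x))$; so I would simply \emph{define} $\pi^*(p)\colon Y\ra Y$ by this formula. The entire construction then rests on one well-definedness question, which I expect to be the crux: if $\pi(x_1)=\pi(x_2)$, is $\pi(p(x_1))=\pi(p(x_2))$? Equivalently, does $\pi\circ p$ factor through $\pi$ for every $p\in E(X)$?

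To settle this I would exploit that $p$ is a limit of powers of $T$. Choose a net $(T^{n_\alpha})$ with $T^{n_\alpha}\ra p$ in the product (pointwise) topology of $X^X$. If $\pi(x_1)=\pi(x_2)$, then since $\pi\circ T^{n_\alpha}=T^{n_\alpha}\circ\pi$ we have, for every $\alpha$,
\[
\pi(T^{n_\alpha}x_1)=T^{n_\alpha}\pi(x_1)=T^{n_\alpha}\pi(x_2)=\pi(T^{n_\alpha}x_2).
\]
Now $T^{n_\alpha}x_i\ra p(x_i)$ and $\pi$ is continuous, so $\pi(T^{n_\alpha}x_i)\ra\pi(p(x_i))$ for $i=1,2$; as the two nets agree termwise and $Y$ is Hausdorff, their limits coincide, i.e.\ $\pi(p(x_1))=\pi(p(x_2))$. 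Hence $\pi^*(p)$ is a well-defined function on $Y$ (total, by surjectivity of $\pi$), and the square commutes by construction. The same net also shows $\pi^*(p)\in E(Y)$: for $y=\pi(x)$ we get $T^{n_\alpha}y=\pi(T^{n_\alpha}x)\ra\pi(p(x))=\pi^*(p)(y)$, so $T^{n_\alpha}\ra\pi^*(p)$ in $Y^Y$, placing $\pi^*(p)$ in the closure $E(Y)$ of $\{T^n\}$. Note also $\pi^*(T^n)=T^n$, since $\pi^*(T^n)(\pi(x))=\pi(T^nx)=T^n\pi(x)$.

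The remaining properties are then routine. For continuity I would argue directly: if $p_\beta\ra p$ in $E(X)$, then for $y=\pi(x)$ we have $\pi^*(p_\beta)(y)=\pi(p_\beta(x))\ra\pi(p(x))=\pi^*(p)(y)$ by pointwise convergence and continuity of $\pi$, so $\pi^*(p_\beta)\ra\pi^*(p)$. (Equivalently, $\pi^*$ factors as $p\mapsto\pi\circ p$ followed by the inverse of the precomposition map $g\mapsto g\circ\pi$, which is a homeomorphism onto its image because $Y^Y$ is compact Hausdorff and $\pi$ is onto, so both factors are continuous.) For the homomorphism property, fix $y=\pi(x)$: on one hand $\pi^*(pq)(y)=\pi\big((pq)(x)\big)=\pi(p(q(x)))$, and on the other $\big(\pi^*(p)\pi^*(q)\big)(y)=\pi^*(p)(\pi(q(x)))$, which equals $\pi(p(q(x)))$ once I evaluate $\pi^*(p)$ using the particular preimage $q(x)$ of $\pi(q(x))$; the two agree. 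For surjectivity, given $q\in E(Y)$ pick a net $T^{n_\alpha}\ra q$; viewing the same powers inside the compact space $E(X)$ (closed in $X^X$), a subnet converges to some $p\in E(X)$, and continuity of $\pi^*$ together with $\pi^*(T^n)=T^n$ gives $\pi^*(p)=q$. Finally, uniqueness is immediate: any map making the square commute must satisfy $\pi^*(p)(\pi(x))=\pi(p(x))$, and since $\pi$ is onto this pins down $\pi^*(p)$ on all of $Y$.
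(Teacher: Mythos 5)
Your proof is correct and complete; the paper itself states Proposition \ref{Propn2.10} without proof (it is quoted from \cite{EllisSquared}), so there is nothing to compare against. Your argument --- defining $\pi^*(p)(\pi(x)):=\pi(p(x))$, settling well-definedness and membership in $E(Y)$ by passing a net $T^{n_\alpha}\to p$ through the continuous, equivariant, surjective $\pi$, and then deducing continuity, the homomorphism property, surjectivity via compactness of $E(X)$, and uniqueness --- is exactly the standard proof of this fact.
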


\begin{defn}[ideal, minimal ideal \cite{EllisSquared}]
Let $(X,T)$ be a dynamical system and  $E(X,T)$ be its respective Ellis semigroup. 
We call a subset $I\subset E(X,T)$ an \emph{ideal} if and only if $EI\subset I$. 
We note this is the definition of a (one-sided) left ideal, however, as we only consider such ideals, we will omit the qualifier `left' and implicitly assume it.
We call an ideal $I\subset E(X)$ \emph{minimal} if and only if for any ideal $J\subset I$, $J=I$, i.e. if it contains no proper ideals. 
\end{defn}

\begin{defn}[idempotent, minimal idempotent, equivalent idempotents, {\cite[Definition 3.13]{EllisSquared}}]\label{Defn3.13}
We call an element $u\in E(X)$ an \emph{idempotent} if and only if $u^2=u$. 
If $u\in I$ for some minimal ideal $I\in E(X)$, then we call $u$ \emph{minimal}. 
We say that two idempotents $u,v\in E(X)$ are \emph{equivalent}, writing $u\sim v$, if and only if $uv=v$ and $vu=u$. 
\end{defn}

What is the relation between idempotents in different minimal ideals? We have the following proposition:

\begin{propn}[{\cite[Proposition 3.14]{EllisSquared}}]\label{Propn3.14}
Let $(X,T)$ be a dynamical system with Ellis semigroup $E(X)$, let $I,K$ be minimal ideals in $E(X)$ and let $u^2=u$ be an idempotent in $I$. 
Then there exists a unique idempotent $v\in K$ such that $u\sim v$. 
Moreover, if $u^2=u\in E(X)$ is minimal, and $v\sim u$, then $v$ is a minimal idempotent, as well. 
\end{propn}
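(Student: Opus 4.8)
The plan is to argue entirely inside the semigroup $E=E(X)$, using only three standard facts recalled from \cite{EllisSquared}: that $E$ is a compact right-topological monoid (right multiplication $\rho_p:q\mapsto qp$ is continuous, and the identity lies in $E$); that each minimal ideal is a closed minimal left ideal $I$, so that $EI\subseteq I$ and $Eu=I$ for every idempotent $u\in I$; and that every nonempty compact right-topological subsemigroup of $E$ contains an idempotent (Ellis's lemma). From $Eu=I$ one gets at once the \emph{basic fact} that any two idempotents $a,b$ lying in the same minimal ideal satisfy $ab=a$: since $a\in Eb$ we may write $a=pb$, whence $ab=pb^2=pb=a$. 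This basic fact, applied in $I$ and in $K$, is the engine of the whole argument.

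For existence, I would first produce an idempotent $v\in K$ satisfying only the one-sided relation $vu=u$. Since $K$ is a left ideal, $Ku$ is a nonempty left ideal contained in $Eu=I$, so minimality gives $Ku=I\ni u$; hence the set $K_u:=\{k\in K:ku=u\}$ is nonempty. It is a subsemigroup (closed under products by associativity) and it is closed, being $K\cap\rho_u^{-1}(\{u\})$, so Ellis's lemma supplies an idempotent $v\in K_u$, i.e. $v\in K$ with $v^2=v$ and $vu=u$. The decisive step is then to upgrade $vu=u$ to $uv=v$. Here the key observation is that $uv$ is again an idempotent of $K$: indeed $uv\in EK\subseteq K$, and $(uv)^2=u(vu)v=u^2v=uv$. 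Now $v$ and $uv$ are two idempotents in the single minimal ideal $K$, so the basic fact yields $v\cdot(uv)=v$; but associativity together with $vu=u$ gives $v\cdot(uv)=(vu)v=uv$. Comparing, $uv=v$, so $u\sim v$.

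Uniqueness is short: if $v,v'\in K$ are idempotents with $u\sim v$ and $u\sim v'$, then the basic fact in $K$ gives $vv'=v$, while on the other hand $vv'=v(uv')=(vu)v'=uv'=v'$ (using $uv'=v'$ and $vu=u$), so $v=v'$. For the ``moreover'' clause I must show that an arbitrary idempotent $v$ with $v\sim u$ is minimal, i.e. that $Ev$ is a minimal left ideal. Let $J\subseteq Ev$ be any nonempty left ideal. Then $Ju$ is a nonempty left ideal inside $(Ev)u=E(vu)=Eu=I$, so $Ju=I\ni u$ and $u=ju$ for some $j\in J$. Writing $j=sv\in Ev$ gives $jv=sv^2=sv=j$, and therefore $v=uv=(ju)v=j(uv)=jv=j\in J$. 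Thus $v\in J$ forces $Ev\subseteq J$, so $J=Ev$; hence $Ev$ is minimal and $v$ is a minimal idempotent.

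I expect the main obstacle to be the two ``direction-flips''. The first is genuinely the heart of the proof: passing from the cheap one-sided relation $vu=u$ to the full equivalence $uv=v$, which works only because $uv$ turns out to be an idempotent of $K$, letting me set the identity $v\cdot(uv)=v$ (from the basic fact) against $v\cdot(uv)=uv$ (from associativity). The second is bookkeeping but easy to get wrong: the relation $u\sim v$ secretly encodes ``same minimal \emph{right} ideal'' whereas $I$ and $K$ are minimal \emph{left} ideals, so the argument closes up only if every appeal to minimality, and every inclusion such as $Ku\subseteq Eu$ or $Ju\subseteq(Ev)u$, is taken on the correct side.
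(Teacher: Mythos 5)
The paper does not actually prove Proposition~\ref{Propn3.14}; it is quoted without proof as a standard fact from \cite{EllisSquared}, so there is no in-paper argument to compare against. Your proof is correct and is essentially the standard one: the existence step via Ellis's lemma applied to the closed subsemigroup $K_u=K\cap\rho_u^{-1}(\{u\})$, the upgrade from $vu=u$ to $uv=v$ by observing that $uv$ is an idempotent of $K$ and playing $v(uv)=v$ against $v(uv)=(vu)v=uv$, and the uniqueness and minimality arguments all check out (note that the paper's Proposition~\ref{no_prox_pairs} confirms it treats minimal ideals as minimal \emph{left} ideals, so your bookkeeping of sides matches the paper's conventions).
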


Minimal idempotents can help capture the idea of a minimal system, as the following proposition shows. 

\begin{propn}[{\cite[Proposition 4.3]{EllisSquared}}]\label{Propn4.3}
Let $(X,T)$ be a dynamical system with Ellis semigroup $E(X)$, let $I\subset E(X)$ be a minimal left ideal and let $x\in X$. 
Then $\overline{\{Tx\}}$ is minimal if and only if there exists an idempotent $u^2=u\in I$ with $ux=x$. 
\end{propn}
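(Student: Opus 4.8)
The plan is to run everything through the action of $E(X)$ on $X$ by evaluation, $p\cdot x:=p(x)$, using two foundational facts. First, each map $p\mapsto px$ is continuous (it is a coordinate projection in the product topology), and the orbit closure is exactly the orbit under this action: $\overline{\{T^n(x)\}}=E(X)x$. Indeed $\{T^nx\}\subseteq E(X)x$ trivially, $E(X)x$ is compact hence closed as a continuous image of the compact set $E(X)$, and $E(X)x\subseteq\overline{\{T^nx\}}$ because $E(X)=\overline{\{T^n\}}$ and evaluation at $x$ is continuous. Second, I will use the structure of a minimal left ideal $I$: for every $p\in I$ the set $E(X)p$ is a nonempty left ideal contained in $I$, so minimality forces $E(X)p=I$. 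I also record that $Ix$ is nonempty, closed (continuous image of the compact set $I$), contained in $E(X)x=\overline{\{T^n(x)\}}$, and $T$-invariant, since $E(X)(Ix)=(E(X)I)x\subseteq Ix$ as $I$ is a left ideal.

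For the direction assuming an idempotent $u=u^2\in I$ with $ux=x$, the goal is to show $E(X)x$ is minimal. Since $u=uu\in E(X)u\subseteq I$ and $E(X)u$ is a left ideal, minimality of $I$ gives $E(X)u=I$; using $ux=x$ and associativity of the action, $E(X)x=E(X)(ux)=(E(X)u)x=Ix$. It then remains to check $Ix$ is minimal: for an arbitrary $y=px\in Ix$ with $p\in I$, the identity $E(X)p=I$ yields $\overline{\{T^n(y)\}}=E(X)y=(E(X)p)x=Ix$. Thus every point of $Ix$ has orbit closure equal to $Ix$, which is exactly the minimality of $\overline{\{T^n(x)\}}$.

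For the converse, assume $\overline{\{T^n(x)\}}$ is minimal. Then $Ix$ is a nonempty closed $T$-invariant subset of $\overline{\{T^n(x)\}}$, so minimality forces $Ix=\overline{\{T^n(x)\}}$; in particular $x\in Ix$, i.e. there is $p\in I$ with $px=x$. The remaining step, which I expect to be the main obstacle, is to upgrade this $p$ to an \emph{idempotent} fixing $x$. To do this I would pass to the stabilizer $J:=\{q\in I: qx=x\}$. It is nonempty by the previous line; it is a subsemigroup, since $q_1x=q_2x=x$ gives $(q_1q_2)x=q_1(q_2x)=x$; and it is closed, being the preimage of the closed set $\{x\}$ under the continuous map $q\mapsto qx$, hence compact. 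Therefore $J$ is itself a compact right-topological semigroup, and the Ellis--Numakura idempotent lemma supplies an idempotent $u\in J\subseteq I$ with $u^2=u$ and $ux=x$, as required. The only delicate point is verifying that $J$ genuinely inherits the compact right-topological semigroup structure needed to invoke the idempotent lemma; everything else is bookkeeping with the facts assembled above.
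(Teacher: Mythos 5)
Your proof is correct. The paper does not actually prove Proposition \ref{Propn4.3}; it is quoted as a standard fact from \cite{EllisSquared}, and your argument is essentially the classical one found there: identifying $\overline{\{T^n(x)\}}$ with $E(X)x$, using $E(X)p=I$ for $p$ in a minimal left ideal for the forward direction, and applying the Ellis--Numakura idempotent lemma to the closed subsemigroup $\{q\in I: qx=x\}$ for the converse. The one point you flag as delicate is indeed fine: for fixed $q$ the map $p\mapsto pq$ is continuous on $X^X$ since $(pq)(y)=p(q(y))$ is just evaluation of $p$ at $q(y)$, so $E(X)$, and hence any closed subsemigroup of it, is a compact right-topological semigroup to which the idempotent lemma applies.
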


The next two propositions and lemma give examples of how the Ellis semigroup captures the asymptotic properties of a dynamical system. 

\begin{propn}[{\cite[Proposition 4.4]{EllisSquared}}]\label{Propn4.4}
The points $x$ and $y$ are proximal if and only if there is $p\in E(X)$ such that $px=py$. 
\end{propn}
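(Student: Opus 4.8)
The plan is to read off everything from the definition of $E(X)$ as the closure of the power maps $\{T^n\}$ inside the product space $X^X$ with the Tychonoff (pointwise) topology, combined with two elementary facts. First, $X^X$ is compact by Tychonoff's theorem, since $X$ is compact Hausdorff. Second, for each fixed point $z\in X$ the evaluation map $\mathrm{ev}_z\colon X^X\to X$, $f\mapsto f(z)$, is continuous; equivalently, a net $f_i\to f$ in $X^X$ if and only if $f_i(z)\to f(z)$ for every $z$. With these in hand both implications become a matter of unwinding definitions.

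For the \emph{only if} direction, I would start from proximality of $x$ and $y$: there is a net $(T^{n_i})$ and a point $z\in X$ with $T^{n_i}x\to z$ and $T^{n_i}y\to z$. Viewing $(T^{n_i})$ as a net in the compact space $X^X$, I would pass to a subnet converging pointwise on all of $X$ to some map $p$. Since each $T^{n_i}$ lies in the set whose closure is $E(X)$, the limit satisfies $p\in E(X)$. A subnet of a convergent net has the same limit, so along the subnet the images of $x$ and $y$ still tend to $z$; by continuity of $\mathrm{ev}_x$ and $\mathrm{ev}_y$ this yields $px=z=py$, which is exactly the required $p$.

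For the \emph{if} direction, suppose $p\in E(X)$ satisfies $px=py$. Because $p$ lies in the closure of $\{T^n\}$, there is a net $(T^{n_i})$ converging to $p$ pointwise, and in particular $T^{n_i}x\to px$ and $T^{n_i}y\to py$. Putting $z:=px=py$, this exhibits a net along which the images of $x$ and $y$ share the common limit $z$, which is precisely the definition of proximality. This direction is the easier of the two, as no compactness argument is needed.

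The only point I would be careful about, and the main obstacle, is the interplay between sequences and nets together with the use of compactness. Since $X$ (and hence $X^X$) need not be metrizable, proximality must be read in the net sense, and the extraction of a pointwise-convergent subnet in the forward direction relies essentially on Tychonoff compactness of $X^X$ rather than on any sequential compactness. Once that is granted, the rest is a direct application of the definition of $E(X)$ and the continuity of the coordinate projections.
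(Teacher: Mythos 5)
Your proof is correct and is the standard argument; the paper itself states Proposition \ref{Propn4.4} without proof, citing it as a known result from \cite{EllisSquared}, so there is no in-paper argument to compare against. Your handling of the one delicate point --- that the closure defining $E(X)$ and the extraction of a pointwise-convergent subnet live in the (generally non-metrizable) compact space $X^X$, so the argument must be phrased with nets even though the paper's definition of proximality speaks of a ``sequence'' $\{T^n\}$ --- is exactly right, and in the metric subshift setting the paper actually works in, one can recover a genuine sequence afterwards by first countability of $X$.
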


\begin{propn}[{\cite[Proposition 4.9]{EllisSquared}}]\label{Propn4.9}
Let $(X,T)$ be a dynamical system with Ellis semigroup $E(X)$. Then $(X,T)$ is distal if and only if $E(X)$ is a group, if and only if $Id_X\in T$ is the only idempotent in $E(X)$. 
\end{propn}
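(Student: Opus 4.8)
The plan is to establish the cycle of implications: $(X,T)$ distal $\Rightarrow$ $Id_X$ is the only idempotent $\Rightarrow$ $E(X)$ is a group $\Rightarrow$ $(X,T)$ distal. This handles all three equivalences at once, and relies throughout on the characterization of proximality in \Cref{Propn4.4} together with the standard fact (Ellis's lemma) that every minimal left ideal of the compact right-topological semigroup $E(X)$ contains an idempotent. Recall that $Id_X = T^0 \in \{T^n\} \subseteq E(X)$ serves as the two-sided identity of $E(X)$ under composition.

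First I would show distality implies that $Id_X$ is the only idempotent. Suppose $(X,T)$ is distal and let $u \in E(X)$ satisfy $u^2 = u$. For an arbitrary $x \in X$ put $y := ux$; then $uy = u^2 x = ux = y$, so $ux = uy$. By \Cref{Propn4.4} the points $x$ and $y$ are proximal, and distality forces $x = y$, i.e.\ $ux = x$. As $x$ was arbitrary, $u = Id_X$, so $Id_X$ is the only idempotent.

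Next, assuming $Id_X$ is the only idempotent, I would deduce that $E(X)$ is a group. Since right multiplication is continuous and $E(X)$ is compact, for any $p \in E(X)$ the set $E(X)p$ is a closed left ideal, hence contains a minimal left ideal $L$, which in turn contains an idempotent by Ellis's lemma. That idempotent must equal $Id_X$, so $Id_X \in E(X)p$ and $p$ has a left inverse. A monoid in which every element has a left inverse is a group: if $qp = Id_X$ and $rq = Id_X$, then $r = r(qp) = (rq)p = p$, whence $pq = Id_X$ and $p$ is two-sided invertible. Therefore $E(X)$ is a group. Finally, if $E(X)$ is a group and $x,y$ are proximal, then by \Cref{Propn4.4} there is $p$ with $px = py$; applying $p^{-1}$ gives $x = p^{-1}(px) = p^{-1}(py) = y$, so no two distinct points are proximal and $(X,T)$ is distal.

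The main obstacle is the middle implication: producing inverses requires the structural input that minimal left ideals of a compact right-topological semigroup carry idempotents, after which the reduction to the purely algebraic monoid argument is routine. The other two implications are essentially immediate applications of \Cref{Propn4.4}. I would also double-check the harmless bookkeeping that $Id_X$ is genuinely the identity and that the minimal left ideal chosen sits inside $E(X)p$, so that the located idempotent really yields a left inverse of $p$.
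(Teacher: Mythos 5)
Your proof is correct. The paper does not prove this proposition at all: it is quoted as a standard result from the Ellis--Ellis monograph \cite{EllisSquared}, so there is no in-paper argument to compare against. Your cycle of implications (distal $\Rightarrow$ $Id_X$ is the only idempotent $\Rightarrow$ $E(X)$ is a group $\Rightarrow$ distal) is the standard textbook proof, and each step checks out: the first implication is exactly the content of the paper's Lemma \ref{Lemma4} (an idempotent $u$ makes $ux$ proximal to $x$); the second correctly combines the closedness of $E(X)p$ under the continuous right-multiplication $q\mapsto qp$, the existence of a minimal left ideal inside it, the Ellis--Namakura idempotent lemma, and the routine monoid fact that left-invertibility of every element forces a group; and the third is an immediate application of Proposition \ref{Propn4.4}. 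The only dependency worth flagging is that the idempotent lemma for compact right-topological semigroups is genuinely external input (the paper itself only alludes to it in passing when asserting the existence of minimal ideals), so in the context of this paper you would cite it rather than reprove it.
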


We will make use of the following additional Lemma:

\begin{lem}[{\cite[Proposition 4.8]{EllisSquared}}]\label{Lemma4}
If $x$ is a distal point in the dynamical system $(X,T)$, and $u$ is an idempotent in $E(X)$, then $ux=x$. 
\end{lem}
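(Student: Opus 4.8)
The plan is to show that $ux$ is proximal to $x$ and then invoke distality of $x$. First I would note that $u \in E(X) \subseteq X^X$, so $ux$ is a genuine point of $X$ to which the notion of proximality applies. The key observation is that the idempotent $u$ itself serves as the witness for proximality: using associativity of the Ellis semigroup together with $u^2 = u$, we compute
$$ u \cdot (ux) = (uu)x = u^2 x = ux = u \cdot x. $$
Thus $p = u$ satisfies $p x = p(ux)$, and so by Proposition~\ref{Propn4.4} the points $x$ and $ux$ are proximal.

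Once proximality is established, the conclusion is immediate: since $x$ is a distal point, any point proximal to $x$ must equal $x$, and applying this to $y = ux$ gives $ux = x$, as desired.

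I expect no serious obstacle here; the argument is genuinely short. The only points that require any care are that $ux$ is a legitimate element of $X$ (rather than merely a formal product) and that the semigroup operation on $E(X)$ is associative, both of which are built into the setup. The entire conceptual content lies in recognizing that idempotency of $u$ delivers the equality $u(ux) = ux$ for free, which is exactly the identity $px = p(ux)$ needed to apply the Ellis-semigroup characterization of proximality in Proposition~\ref{Propn4.4}.
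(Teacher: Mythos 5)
Your proof is correct and is essentially identical to the paper's: both establish that $ux$ is proximal to $x$ by using the idempotent $u$ itself as the witness $p$ in Proposition~\ref{Propn4.4}, via $u(ux)=u^2x=ux=u(x)$, and then conclude from distality of $x$.
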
 

\begin{proof}
By Proposition \ref{Propn4.4}, $x,y$ are proximal if and only if there exists $p\in E(X)$ such that $px=py$. 
Then for any $x\in X$ and any idempotent $u\in E(X)$, $ux$ is proximal to $x$: $u(x)=u(ux)$. 
Thus, if $x$ is distal, $ux=x$. 
\end{proof} 

Using the propositions above, we give a much shorter proof of \cite[Lemma 3.3]{cJohnson}

\begin{lem}[{\cite[Lemma 3.3]{HaddadJohnson}}]\label{Lemma3-3}
For a minimal system $(X,T)$ over $\N$ or $\Z$ (or even more generally, any group $G$), if $X$ is not distal, then every minimal left ideal of $E(X)$ contains more than one idempotent. 
\end{lem}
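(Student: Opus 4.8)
The plan is to argue by contradiction and reduce this single-ideal statement to the \emph{global} distality criterion of Proposition~\ref{Propn4.9}. Fix an arbitrary minimal left ideal $L\subseteq E(X)$. Since $(X,T)$ is minimal, every orbit closure $\overline{\{T^n z\}}$ equals $X$ and is therefore itself a minimal subsystem; Proposition~\ref{Propn4.3} then supplies, for each $z\in X$, an idempotent $e_z\in L$ with $e_z z=z$. In particular $L$ already contains at least one idempotent, so the real content is to exclude the possibility that it contains exactly one.

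Next I would suppose, toward a contradiction, that $L$ has a unique idempotent $u$. By the previous observation, for every $z\in X$ the fixing idempotent $e_z\in L$ must coincide with this unique $u$, whence $uz=z$ for all $z\in X$; regarded as elements of $X^X$ this says precisely $u=\mathrm{Id}_X$, so $\mathrm{Id}_X\in L$. Because $L$ is a left ideal and $p\cdot\mathrm{Id}_X=p$ for every $p\in E(X)$, this forces $E(X)=E(X)\,\mathrm{Id}_X\subseteq L$, i.e.\ $L=E(X)$. Thus the hypothesis ``$L$ has a unique idempotent'' upgrades to ``$\mathrm{Id}_X$ is the only idempotent of the entire Ellis semigroup,'' and Proposition~\ref{Propn4.9} then forces $(X,T)$ to be distal, contradicting the hypothesis. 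Hence $L$ contains more than one idempotent, and as $L$ was arbitrary the conclusion holds for every minimal left ideal.

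The step I expect to carry the weight is the collapse $\mathrm{Id}_X\in L\Rightarrow L=E(X)$: it is exactly what lets me invoke the global criterion of Proposition~\ref{Propn4.9} instead of having to develop a structure theory internal to $L$. The argument uses no special feature of $\N$ or $\Z$, so it should go through verbatim for an arbitrary acting group $G$, with $\mathrm{Id}_X=T_e$ for the group identity $e$, provided Propositions~\ref{Propn4.3} and~\ref{Propn4.9} are available in that generality (as they are).

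Finally, a remark on why I avoid the more hands-on route. One might instead start from a proximal pair $x\neq y$ (which exists because $X$ is not distal) together with some $p\in E(X)$ satisfying $px=py$ from Proposition~\ref{Propn4.4}, and try to exhibit two \emph{distinct} point-fixing idempotents of $L$ directly. The obstacle there is that a single idempotent of $L$ may well fix both $x$ and $y$, so this approach does not obviously separate two idempotents; the contradiction argument above sidesteps this difficulty entirely by ruling out uniqueness globally.
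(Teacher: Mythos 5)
Your proof is correct and follows essentially the same route as the paper's: assume a minimal left ideal with a unique idempotent, use Proposition~\ref{Propn4.3} and minimality of $X$ to force that idempotent to be $\mathrm{Id}_X$, and then invoke Proposition~\ref{Propn4.9} to contradict non-distality. Your explicit intermediate step that $\mathrm{Id}_X\in L$ forces $L=E(X)$ (so that uniqueness in $L$ really is uniqueness in all of $E(X)$) is a small but welcome clarification of a point the paper passes over tersely.
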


\begin{proof} 
Assume $(X,T)$ is minimal not distal, and suppose $I\subset E(X)$ be a minimal ideal with only one idempotent $u^2=u\in I$. 
Since $X$ is minimal, Proposition \ref{Propn4.3} yields that for each $x\in X$, there is (some) idempotent $v_x\in I$ such that $v_xx=x$. 
Since $I$ has only one idempotent, we have that $v_x=u$ for all $x\in X$, i.e. that $ux=x$ for all $x\in X$. 
Thus, the single idempotent $u\in I$ acts as identity on all of $X$, i.e.  $u=Id_X$. 
Therefore, $I=E(X)$. 
Since by \cite[Theorem 3.12 (e)]{EllisSquared}, the ideal $I$ can be partitioned into a disjoint union of groups of the form $uI$ for idempotent(s) $u\in I$, we conclude that  $I=E(X)$ is a group. 
Then by Proposition \ref{Propn4.9}, $X$ is distal - a contradiction to the assumption that it is not. 
\end{proof}

Similarly to the way in which Lemma \ref{Propn4.9} shows how the Ellis semigroup reflects the notion of distality, the Ellis semigroup can also be used to capture a special property of proximality. 
\begin{propn}[{\cite[Proposition 13.1]{EllisSquared}}]\label{prox_trans_one_min_ideal}
The Ellis semigroup $E(X,T)$ has exactly one minimal ideal if and only if proximality in $(X,T)$ is a transitive relation.
\end{propn}
Note that in general, the proximal relation is not transitive. For example, amongst the four fixed points of the Thue-Morse substitution, we see that $v$ is proximal to $w$ and $w$ is proximal to $\bar{v}$ but $v$ is definitely not proximal to $\bar{v}$. 

We continue with a generalisation of an analogue of \cite[Lemma 3.1]{HaddadJohnson}. This recasts their lemma, which concerns IPCPs (see Definition \ref{IPCPdef}) in $\N$-cascades, in terms of idempotents in arbitrary dynamical systems over the same group, hence generalising it:

\begin{lem} \label{Lemma3-1}
Given an extension $(X,T)$ of $(Y,T)$, with $(X,T)$ and $(Y,T)$ dynamical systems, the idempotents of $E(X)$ project to idempotents of $E(Y)$. 
\end{lem}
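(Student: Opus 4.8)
The plan is to leverage Proposition~\ref{Propn2.10} almost in its entirety, so that the argument reduces to the elementary fact that a semigroup homomorphism carries idempotents to idempotents. First I would unpack what ``project'' means here: the extension $(X,T)$ of $(Y,S)$ comes equipped with a surjective homomorphism $\pi:X\to Y$ intertwining the two actions, and by Proposition~\ref{Propn2.10} this $\pi$ induces a canonical map $\pi^*:E(X)\to E(Y)$ which is continuous, surjective, and multiplicative, in the sense that $\pi^*(pq)=\pi^*(p)\pi^*(q)$ for all $p,q\in E(X)$. It is along this $\pi^*$ that idempotents of $E(X)$ are to be projected.

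The key step is then a one-line computation. Let $u\in E(X)$ be any idempotent, so that $u^2=u$ (i.e. $u\circ u=u$). Applying $\pi^*$ and using its multiplicativity from Proposition~\ref{Propn2.10}, I would write $\pi^*(u)=\pi^*(u^2)=\pi^*(u)\,\pi^*(u)=\bigl(\pi^*(u)\bigr)^2$, which exhibits $\pi^*(u)$ as an idempotent of $E(Y)$. Since $u$ was arbitrary, every idempotent of $E(X)$ projects to an idempotent of $E(Y)$, as claimed.

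There is no genuine obstacle beyond invoking the correct preliminary result: the entire weight of the lemma is borne by the existence and multiplicativity of $\pi^*$ asserted in Proposition~\ref{Propn2.10}, and the idempotent calculation is purely formal. I would only add two remarks. First, the argument uses nothing specific to $\Z$; it goes through verbatim for systems over $\N$ or over any common (semi-)group, since Proposition~\ref{Propn2.10} is itself phrased at that level of generality, which is exactly what the ``(or in general $\ldots$)'' clause of the statement records. Second, I would note that $\pi^*$ need not be injective on idempotents, so the lemma gives only that idempotents of $E(X)$ land \emph{among} the idempotents of $E(Y)$, not a bijection between the two sets; this one-directional control is, however, precisely what is needed for the subsequent analysis of minimal ideals.
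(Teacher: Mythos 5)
Your proposal is correct and follows essentially the same route as the paper: both invoke Proposition~\ref{Propn2.10} to obtain the induced multiplicative map $\pi^*:E(X)\to E(Y)$ and then conclude with the one-line computation $\pi^*(u)=\pi^*(uu)=\pi^*(u)\pi^*(u)$. Your closing remarks on generality and non-injectivity are accurate but not needed for the argument.
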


\begin{proof}
Given an extension $\pi:(X,T)\rightarrow (Y,T)$, by Proposition \ref{Propn2.10}, we have an induced homomorphism $\pi^*$ between Ellis semigroups, such that the diagram is commutative for all $x_0\in X$:

$$ \begin{tikzcd}
E(X) \arrow{r}{\pi^*}[swap]{p\mapsto q} \arrow[swap]{d}{p \mapsto p x_0} & E(Y) \arrow{d}{q\mapsto q\pi(x_0)} \\%
X \arrow{r}{\pi}[swap]{x_0\mapsto \pi(x_0)} & Y
\end{tikzcd}
$$
Note that $\pi^*(pp')=\pi^*(p)\pi^*(p')$. Thus, if $u\in E(X)$ is an idempotent and $v=\pi^*(u)$, then $v\in E(Y)$ is also an idempotent:

\begin{align*}
vv&=\pi^*(u)\pi^*(u) &\textrm{ by definition}\\
&= \pi^*(uu) &\textrm{ since $\pi^*$ is a homomorphism}\\
&= \pi^*(u) &\textrm{ since $u$ is an idempotent}\\
&=v &\textrm{ by definition.}
\end{align*}

This proves the required result. 
\end{proof}

Finally,  we note the following Lemma. 
\begin{propn}[{\cite[Chapter IV, Proposition 3.26]{JdV}}]\label{no_prox_pairs} 
Let $(X,T)$ be a dynamical system, $I\subset E(X)$ be a minimal ideal, and $u\in I$ be an idempotent. 
The subspace $u[X]\subset X$, where $u[X]$ denotes the image of $X$ under the idempotent $u$, does not contain any proximal pairs.
\end{propn}
For completeness, we give a proof of this proposition. 

\begin{proof}
Assume that $x,y\in u[X]$ are proximal points; by Proposition \ref{Propn4.4} there exists $p\in E(X)$ such that $px=py$.  
Since $I$ is a minimal ideal, for all $\g\in I$, $E\gamma$ is a minimal left ideal which is a subset of $I$, thus $E\gamma=I$. 
Applying this for $\g=u$, we get that $pu\in I$. 
Applying this again for $\g=pu$, we get that $Epu=I\ni u$, and thus there is $q\in E$ such that $qpu=u$. 
From this, we obtain:
\begin{align*}
x&=ux & (x\in u[X])\\
&=qpux & (qpu=u)\\
&=qpx & (x\in u[X])\\
&=qpy & (px=py)\\
&=qpuy & (y\in u[X])\\
&=uy & (qpu=u)\\
&= y & (y\in u[X]),
\end{align*}
as required. 
Thus, there are no proximal pairs in $u[X]$. 
\end{proof}

\section{The AI Factor} \label{Factorizations_Chapter}

In \cite{Keane} and \cite{CovenKeane}, Coven and Keane gave an explicit construction of a two-step factor $(X_\theta,\si)\rightarrow (X_\phi,\si)\rightarrow\Z(r)$ for aperiodic binary substitutions $\theta$ of constant length $r$. 
There, the map from $X_\theta$ to $X_\phi$ is isometric (Definition \ref{isometric_defn}), and the map from $X_\phi$ to the $r$-adic adding machine $\Z(r)$ is amost automorphic (Definition \ref{AI_defn}). 
This result was generalized for a  certain class of substituions over arbitrary finite alphabets by Martin in \cite{Martin}, where he shows that a similar two-step factor map exists for a certain class of substitutions over an arbitrary finite alphabet. 
In the same paper, he also shows that the maximal equicontinuous factor of any aperiodic substitution is $\Z_{h(\theta)}\times\Z(r)$, where $r$ is the length of the substitution $\theta$ and $h(\theta)$ is a constant related to the substitution.
Soon after, the question about the maximal equicontinuous factor of any constant length substitution was completely settled by Dekking \cite{Dekking}. 
Similar, though more complicated and abstract, constructions have been used by Veech in his paper \cite{Veech}, where he proves that every point-distal dynamical system with a residual set of distal points has an almost automorphic extension which is an AI system.  We recall that a system is point-distal if and only if it has a distal point with a dense orbit. 
Eli Glasner \cite{SGlasner} obtains a generalisation of a similar flavor by showing that a metric minimal dynamical system whose Ellis semigroup has finitely many minimal ideals is a PI system. 
In a subsequent paper \cite{GlasnerPreprint} he expands upon an example which shows the reverse does not hold: that there exists a proximal-isometric system whose Ellis semigroup has uncountably many minimal ideals. A proximal-isometric system is similar to an AI system, where the `almost automorphic' condition is replaced by `proximal'. 

Here, we will use notions introduced by Martin to construct a two-step factor as above for our substitution space $(X_\theta,\si)$. 
However, our construction differs from Martin's through a closer investigation of the intermediate space $X_\phi$. 
Unlike Martin, we do not consider $X_\phi$ as a quotient of $X_\theta$, but instead we show $X_\phi$ is a substitution space over a potentially smaller alphabet $\B$. 
Moreover, we prove that the map $\Psi:(X_\phi,\si)\rightarrow(\Z_{h(\theta)}\times\Z(r),+)$ is one to one everywhere outside of the orbits of the fixed points of $\phi$. 
Here, we give a novel presentation of these results. 

Let us now introduce the notions and results which will be called upon in the following discussion. 
All non-standard definitions and results can be found in \cite{Martin}.


\begin{defn}[isometric extension, \cite{Martin}] \label{isometric_defn} 
Let $\Phi:X\rightarrow Y$ be a homomorphism of dynamical systems $(X,T)$ and $(Y,T)$, and let $K:=\{(x,y)\in X\times X:\Phi(x)=\Phi(y)\}$. 
We say that \emph{$(X,T)$ is an isometric extension of $(Y,T)$} if and only if there is a continuous function $F:K\rightarrow\R$ such that:
\begin{enumerate}
\item For each $y\in Y$, $F:\Phi^{-1}(y)\times\Phi^{-1}(y)\rightarrow \R$ defines a metric on $\Phi^{-1}(y)$, and 
\item $F(tx,ty)=F(x,y)$ for all $t\in T$. 
\end{enumerate}
Moreover, we assume that for each $y\in Y$, the fiber $\Phi^{-1}(y)$ contains at least two points. 
\end{defn}

\begin{defn}[AI extension, \cite{Martin}]\label{AI_defn}
Let $(X,T)$, $(Y,T)$, and $(Z,T)$ be dynamical systems with homomorphisms $\Phi:X\rightarrow Y$ and $\Psi: Y\rightarrow Z$. 
We say that $(X,T)$ is an \emph{AI extension of $(Z,T)$} if and only if 
$(Y,T)$ is an almost automorphic extension of $(Z,T)$ and $(X,T)$ is an isometric extension of $(Y,T)$. 
\end{defn}

\begin{defn}[AI dynamical system, \cite{Martin}] \label{AIsystemdefn}
We call a dynamical system $\mathcal{X}=(X,T)$ an \emph{AI system} if and only if there exists an ordinal $\al$ and an inverse system $\{\mathcal{X}_\be;\Phi_{\be_\g} (\g\leq\be)\}_{\be\leq\al}$ such that
\begin{enumerate}
\item $\mathcal{X}_\al=\mathcal{X}$,
\item $\mathcal{X}_0$ is the one-point dynamical system,
\item If $\be+1<\al$, then $\mathcal{X}_{\be+1}$ is an AI extension of $\mathcal{X}_\be$; 
if $\be+1=\al$, then $\mathcal{X}_{\be+1}$ is an AI extension of $\mathcal{X}_\be$, where we do not require the final isomorphic extension to have fibers of at least two points, and 
\item If $\be\leq\al$ is a limit ordinal, then $\mathcal{X}_\be=\lim^{-1}_{\g\be}\mathcal{X}_\g$. 
\end{enumerate}
\end{defn}

Recall Hypotheses \ref{Hyp_1} and \ref{Hyp_2} from earlier, that $\theta$ is an aperiodic constant-length substitution such that if $\theta$ is moreover bijective, then $\theta^n(a)_0=\theta^n(a)_{r-1}=a$.

Also recall the following definitions. 


Let $\Z(r)$ be the \emph{$r$-adic adding machine}, defined as follows. 
We consider this as the set of all sequences $z_0z_1z_2\ldots$, where $z_i\in\{0,\ldots,r-1\}$ for $i\geq0$. 
Such a sequence will be viewed as a formal $r$-adic expansion $z_0+z_1r+z_2r^2+\ldots$, and the group addition is defined accordingly. 
We define a metric $\rho$ on $\Z(r)$ as follows: 
$\rho(\{a_i\},\{b_i\})=1/(k+1)$, where 
$k:=\max\{j:a_i=b_i \textrm{ for }i=0,\ldots,j-1\}$, if $a_0=b_0$, and 
as $\rho(\{a_i\},\{b_i\})=1$ otherwise. 
The map $T:\Z(r)\rightarrow\Z(r)$ is the homeomorphism of $\Z(r)$ onto itself corresponding to addition of the group element $100\ldots$. 
We denote by $\mathcal{Z}(r)$ the dynamical system $(\Z(r),T)$. 
By an \emph{integer} in $\Z(r)$, we mean an element of the form $T^n(000\ldots)$, for $n\in\Z$. 
Correspondingly, a \emph{non-integer} is any element not of this form (note that it will have infinitely many $0$'s and infinitely many $1$'s in its tail).

We denote by $\Z_m$ the cyclic group of order $m$, where $\Z_m$ acts on itself via addition modulo $m$.

In \cite{Martin}, Martin showed that

\begin{lem}[{\cite[Lemma 4.01]{Martin}}] \label{flow_homo}
Let $\theta$ be an aperiodic substitution of length $r$ which is injective on letters. 
There is an equicontinuous factor map $f:(X_\theta,\si)\rightarrow(\Z(r),+)$. 
\end{lem}

Recall Definition \ref{basic_block_defn} of a basic $r^k$-block, that is, a finite word $B$ such that $\theta^k(a)=B$ for some $k\in\N^+$ and $a\in\A$. 
We introduce the following notation in order to more precisely describe the action of the map $f$ on words in $X_\theta$. 

\begin{ntn}
For $x\in X_\theta$, $z=z_0z_1\ldots \in\Z(r)$, and $k\in\N^+$, we denote by $x[(z);k+1]$ the $r^{k+1}$-block
$$
x\big[-\sum_{i=0}^k z_i r^i, -\sum_{i=0}^k z_i r^i +r^{k+1}-1\big].
$$
\end{ntn}

\begin{lem}[{\cite[Lemma 4.03]{Martin}}]\label{preimage_adding_element} 
Let $x\in X_\theta$, $z=z_0z_1\ldots\in\Z(r)$. 
For the function $f$ as in Lemma \ref{flow_homo}, we have $f(x)=z$ if and only if for all $k\in\N^+$, $x[(z);k+1]$ is a basic $r^{k+1}$-block.
\end{lem}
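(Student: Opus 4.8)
The plan is to identify $f(x)$ with the \emph{address} of the origin in the hierarchical block decomposition of $x$, and then read off the two directions from this identification. First I would recall that, since $\theta$ is admissible and in particular aperiodic, it is recognizable in the sense of Mossé: every $y\in X_\theta$ has a unique decomposition into basic $r$-blocks, and by iterating, a unique decomposition into basic $r^k$-blocks for every $k$. Let $c_{k}(x)\in\{0,\ldots,r^{k}-1\}$ denote the offset of the coordinate $0$ inside the level-$k$ block of $x$ containing it. Because each basic $r^{k+1}$-block $\theta^{k+1}(a)=\theta^{k}(\theta(a)_0)\cdots\theta^{k}(\theta(a)_{r-1})$ is a concatenation of $r$ basic $r^{k}$-blocks, these offsets cohere: $c_{k+1}(x)\equiv c_k(x)\pmod{r^k}$. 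I would then observe that the homomorphism $f$ of Lemma \ref{flow_homo} is exactly the map recording this coherent sequence of offsets; concretely, writing $f(x)=z_0z_1\ldots$, one has $\sum_{i=0}^{k}z_i r^i=c_{k+1}(x)$ for every $k$. This is forced by the $\si$-equivariance $f(\si y)=f(y)+1$ together with the way the decomposition shifts, since moving $x$ one step carries the origin one step further into its block.

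Granting this, the forward direction is short. If $f(x)=z$, then $c_{k+1}(x)=\sum_{i=0}^{k}z_i r^i$, so the level-$(k+1)$ block of the decomposition containing the origin begins at coordinate $-\sum_{i=0}^{k}z_i r^i$ and has length $r^{k+1}$; this is precisely $x[(z);k+1]$, and by construction it is a basic $r^{k+1}$-block.

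For the converse I would prove the sharper statement that the block condition pins down $z$ uniquely, hence forces $z=f(x)$. Suppose $z$ satisfies the hypothesis but $z\neq z':=f(x)$, and let $j$ be the first index where they differ. For $k\geq j$ the blocks $x[(z);k+1]$ and $x[(z');k+1]$ start at coordinates whose difference is $d_k=\sum_{i=j}^{k}(z'_i-z_i)r^i$, with $r$-adic valuation $v_r(d_k)=j$ and $0<|d_k|<r^{k+1}$, so the two occurrences overlap and are out of phase. The block $x[(z');k+1]$ is a genuine level-$(k+1)$ block of the decomposition, while $x[(z);k+1]=\theta^{k+1}(a)$ is the hypothesized basic block. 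The crux is to show a long basic block can occur only in phase: comparing a radius-$L$ window (where $L$ is the recognizability constant) centred at an interior level-$1$ cut of $\theta^{k+1}(a)$ with the matching window in $x$, recognizability forces that cut to be a genuine level-$1$ cut of $x$, so $d_k\equiv0\pmod r$; de-substituting and iterating (valid while the surviving block is longer than $L$) upgrades this to $d_k\equiv0\pmod{r^{m(k)}}$ with $m(k)\to\infty$. For $k$ large enough that $m(k)>j$ this contradicts $v_r(d_k)=j$, so $z=f(x)$.

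The main obstacle is this last paragraph: ruling out out-of-phase occurrences of basic blocks, which is exactly where Mossé recognizability does the real work. Care is needed at the bottom of the de-substitution induction, where the surviving block (of length $r$) may be too short to apply the local recognizability window directly — which is precisely why the argument is run for large $k$ and phrased through the valuation $v_r(d_k)$, rather than by trying to conclude $d_k=0$ at a single scale.
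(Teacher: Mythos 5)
The paper itself offers no proof of this lemma: it is imported from Martin \cite{Martin}, where the homomorphism $f$ of Lemma \ref{flow_homo} is in effect \emph{constructed} by assigning to $x$ the unique $z\in\Z(r)$ for which every $x[(z);k+1]$ is basic, so the statement amounts to that construction together with existence and uniqueness of such a $z$. Your proposal supplies a genuine self-contained argument for exactly that, and its skeleton is sound: the coherence $c_{k+1}(x)\equiv c_k(x)\pmod{r^k}$ of the offsets, the reading of the forward direction as ``the level-$(k+1)$ block containing the origin is basic,'' and the converse via the valuation clash between $v_r(d_k)=j$ and $d_k\equiv 0\pmod{r^{m(k)}}$ are all correct, including your care at the bottom of the de-substitution induction. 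In the constant-length admissible setting the unique desubstitution you invoke is available by more elementary means than the general Moss\'e theorem (and is what Martin's framework rests on), so you are using heavier machinery than necessary, but that is a matter of economy, not correctness.

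The one step you should not wave through is the sentence asserting that $\si$-equivariance ``forces'' $f$ to be your address map $x\mapsto(c_k(x))_k$. On a minimal system, two continuous equivariant maps into the group $\Z(r)$ differ by a continuous invariant, hence constant, function; so equivariance pins down $f$ only up to an additive constant, and for a wrongly translated $f$ the lemma is false. To close this you need (i) to verify that the address map is itself a continuous equivariant surjection onto $\Z(r)$ --- continuity is again a recognizability statement, since $c_{k+1}(x)$ must be determined by a finite central window of $x$ --- and (ii) to fix the constant by the normalisation the paper uses implicitly, namely that fixed points of $\theta$ are sent to $000\ldots$ (compare the use of $f^{-1}(00\ldots)$ in the proof of Theorem \ref{1-1-outside_orbits}). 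With that normalisation made explicit, your forward direction is immediate and your uniqueness argument completes the equivalence.
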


\begin{ntn}[special point of $X_\theta$]
From now on, for a constant-length substitution $\theta$, let a \emph{special point $x_\theta$ of $\theta$} be any bi-infinite fixed point of $\theta$ such that $x_\theta[0]=0$, i.e. such that $x_\theta=\ldots\cdot 0 \ldots$. 
\end{ntn}

For substitutions $\theta$ where the two-letter word $00$ is $\theta$-legal, we will take the special point to be the point starting from the seed $0\cdot 0$, i.e. the bi-infinite fixed point $x_\theta$ such that $x_\theta=\ldots 0\cdot 0\ldots$. 

\begin{xmpl}
The special point as above defined by the Thue-Morse substitution $\theta$ is $x_\theta=\ldots 0110\cdot0110\ldots=\bar{v}$. 
For the bijective substitution $\phi$ from Example \ref{xmpl2}, we can set $x_\phi=\ldots2342\cdot0120\ldots$, since $00$ is not a $\phi$-legal word. 
\end{xmpl}

\begin{defn}[height of a substitution, \cite{Martin, Dekking}] 
For the substitution $\theta$ of constant length $r$, let $M$ be the set of indexes of all positive occurrences of $0$ in the special point $x_\theta$, more formally, define $M:=\{n\in\N^+:x_n=0\}$.
Denote by $d_\theta$ the greatest common divisor of elements of $M$. 
Finally, define $h(\theta):=\max\{n\geq1: (n,r)=1, \ n\textrm{ divides } d_\theta\}$ to be the \emph{height} of the substitution $\theta$. 
\end{defn}

\begin{xmpl}
For any continuous substitution, in particular the Thue-Morse one, we have that $00$ is $\theta$-legal. Hence, $h(\theta)=1$ for any continuous substitution. 
For more information, see \cite{CovenKeane}. 
For the substitution from Example  \ref{xmpl2}, we can easily see that $h(\theta)=3$. 
\end{xmpl}

We follow \cite{Martin} and define an equivalence relation on the alphabet $\A$ via the following sets:

\begin{defn}[$S_p$, {\cite[Definition 4.11]{Martin}}] 
For $i\in\A$, let $z(i)=\min\{n\geq0: \theta(i)_n=0\}\mod h(\theta)$. 
For $p\in\{0,\ldots,h(\theta)-1\}$, we define $S_p:=\{i\in\A: z(i)\cong-p\mod h(\theta)\}$. 
\end{defn}

\begin{xmpl}[Thue-Morse]
There is just one equivalence class of letters associated with the Thue-Morse substitution, namely $S_0=\{0,1\}$. 
\end{xmpl}

\begin{xmpl}[Substitution from Example \ref{xmpl2}]
For the substitution from Example \ref{xmpl2} we have equivalence classes of letters $S_0=\{0,3\}, \ S_1=\{1,4\}, \ S_2=\{2,5\}$. 
\end{xmpl}

\begin{thm}[{\cite[Theorem 5.09]{Martin}}] 
Let $\theta$ be an aperiodic substitution of constant length $r$ which is injective on letters. 
Then its maximal equicontinuous factor is $\Z_{h(\theta)}\times\Z(r)$. 
\end{thm}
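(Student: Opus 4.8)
The plan is to realise $\Z_{m(\theta)}\times\Z(r)$ as an equicontinuous factor of $(X_\theta,\si)$ through an explicit map, and then to verify its maximality against the universal property in the definition of the maximal equicontinuous factor.

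First I would construct the factor map $\pi=(g,f)\colon X_\theta\to\Z_{m(\theta)}\times\Z(r)$. The second coordinate is already supplied by Lemma~\ref{flow_homo}, so the real work is to produce the height coordinate $g\colon X_\theta\to\Z_{m(\theta)}$. Using the classes $S_p$, I would define a colouring $c\colon\A\to\Z_{m(\theta)}$ by setting $c(i)=p$ for $i\in S_p$, and then put $g(y)=c(y_0)$. The key combinatorial input is a \emph{consistency} lemma: for every legal two-letter word $ab\in P_\theta$ one has $c(b)=c(a)+1\pmod{m(\theta)}$ (after fixing the orientation of the colouring). This should follow by unwinding the definition of $z(i)$, the definition of the height $m(\theta)=d_\theta^*$, and the coincidence-free standard form of Hypothesis~\ref{Hyp_2}. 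Granting consistency, $g$ depends only on the $0$-th coordinate, hence is continuous, and satisfies $g(\si y)=g(y)+1$, so it is a homomorphism onto $(\Z_{m(\theta)},+1)$; combining it with $f$ yields $\pi$.

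Next I would dispatch the two easy properties. The group $\Z_{m(\theta)}\times\Z(r)$, acted on by translation by the element $(1,1)$, is a rotation on a compact metric abelian group, hence an isometry and in particular equicontinuous. For surjectivity, the crucial observation is that $\gcd(m(\theta),r)=1$, because $m(\theta)=d_\theta^*$ is by construction the product of exactly those prime factors of $d_\theta$ that do not divide $r$; this makes $(1,1)$ a topological generator, so the target is itself a minimal system. Since $\pi(X_\theta)$ is a closed, $\si$-invariant subset of a minimal system, it must be the whole group, so $\pi$ is onto and $\Z_{m(\theta)}\times\Z(r)$ is an equicontinuous factor of $(X_\theta,\si)$.

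The remaining, and principal, task is maximality. Here I would invoke the universal property directly: given any equicontinuous factor $h\colon(X_\theta,\si)\to(Z,T)$, one may assume $Z$ minimal, whence $(Z,T)$ is a rotation on a monothetic compact abelian group and $h$ is assembled from continuous eigenfunctions. It therefore suffices to show that the group $\Lambda\subset\T$ of continuous eigenvalues of $(X_\theta,\si)$ coincides with the character group of $\Z_{m(\theta)}\times\Z(r)$, namely the group generated by $e^{2\pi i/m(\theta)}$ together with all $e^{2\pi i k/r^n}$. The inclusion ``$\supseteq$'' is witnessed by $f$, which contributes the $r$-power roots of unity, and by $g$, which contributes $e^{2\pi i/m(\theta)}$. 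The hard part, and the main obstacle, is the reverse inclusion: that $(X_\theta,\si)$ has \emph{no other} continuous eigenvalues. For this I would exploit the constant-length structure, arguing that a continuous eigenfunction with eigenvalue $\lambda$ forces $\lambda^{r^n}\to1$ and, via recognizability of basic $r^k$-blocks (in the spirit of Lemma~\ref{preimage_adding_element}), pins $\lambda$ to an $r$-adic rational whose coprime-to-$r$ denominator must divide $d_\theta$, hence $m(\theta)$. Equivalently, one can phrase this step as showing that $\pi(x)=\pi(y)$ forces $x$ and $y$ into the same regionally proximal class, so that the fibres of $\pi$ are precisely those classes; either way the content is that $\pi$ discards no equicontinuous information, and this spectral/recognizability argument is where essentially all the difficulty resides.
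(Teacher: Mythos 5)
First, a point of comparison: the paper does not prove this statement at all --- it is imported verbatim from Martin \cite{Martin} (with the rational spectrum ultimately settled by Dekking \cite{Dekking}), and the text only records the factor map $x\mapsto(\al(x),f(x))$ afterwards. So there is no ``paper's proof'' to match; your proposal has to stand on its own. Its first half does. Your map $\pi=(g,f)$ is, up to a harmless reparametrisation, exactly the map the paper attributes to Martin: your colouring $c$ by the classes $S_p$ encodes the same residue as $\al(x)=-p(x)\bmod m(\theta)$, and the consistency lemma you need ($c(b)=c(a)+1$ for every $ab\in P_\theta$) is precisely Proposition~\ref{31_May_2018_Propn}, proved later in the paper from the definition of $m(\theta)$ and Hypothesis~\ref{Hyp_2}. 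Equicontinuity, and surjectivity via $\gcd(m(\theta),r)=1$ and minimality of the rotation by $(1,1)$, are correct and complete.

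The gap is in the maximality half, which you correctly identify as carrying all the weight but then only gesture at. Two specific problems. First, reducing maximality to computing the group $\Lambda$ of continuous eigenvalues requires knowing that the maximal equicontinuous factor of a minimal system is the rotation dual to $\Lambda$; that is fine to quote, but it is the entire reduction, not a triviality. Second, and more seriously, the argument you sketch for $\Lambda\subseteq\langle e^{2\pi i/m(\theta)}, e^{2\pi i k/r^n}\rangle$ does not close: the condition $\lambda^{r^n}\to 1$ does \emph{not} pin $\lambda$ to a root of unity --- there are irrational $\alpha$ with $r^n\alpha\to 0$ in $\T$ (numbers whose base-$r$ expansion has rapidly growing blocks of zeroes), so one must separately prove that a constant-length substitution system has purely rational continuous spectrum and that the part of the denominator coprime to $r$ divides $d_\theta$. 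This is the actual content of Martin's and Dekking's theorems (it needs recognizability plus a return-time/height analysis of the fixed point), and invoking ``recognizability in the spirit of Lemma~\ref{preimage_adding_element}'' does not supply it. As written, the proposal establishes that $\Z_{m(\theta)}\times\Z(r)$ is \emph{an} equicontinuous factor, but not that it is the maximal one.
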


This result is extended by Dekking in \cite{Dekking}, where he uses a different technique to remove the additional assumptions which Martin makes, thus completely settling the question about the maximal equicontinuous factor of constant length substitution systems. 

Martin shows that the map to the maximal equicontinuous factor is $x\mapsto(\al(x),f(x))$, where $f(x)$ is as in Lemma \ref{flow_homo}, and $\al(x)=-p(x)\mod h(\theta)$, where $p(x):=\min\{i\geq1:x_i=0\}$. 
Moreover, he has linked a type of partial coincidence within an equivalence class $S_i$ with the property of being an almost automorphic extension of its maximal equicontinuous factor. More precisely: 

\begin{thm}[{\cite[Theorem 6.03]{Martin}}] \label{aa_xtn_iff_partial_coinc} 
Let $\theta$ be a constant-length substitution which is injective on letters. 
The cascade $(X_\theta,\si)$ is an almost automorphic extension of its maximal equicontinuous factor if and only if for some $i\in\{0,\ldots,h(\theta)-1\}$, 
there are integers $k\in\N^+$, $m\in\{0\ldots,r^k-1\}$, such that if $p,q\in S_i$, then $\theta^k(p)_m=\theta^k(q)_m$. 
\end{thm}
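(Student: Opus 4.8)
The plan is to realise $(X_\theta,\si)$ as an almost one-to-one extension of its maximal equicontinuous factor by analysing the fibres of the factor map $\pi=(\al,f):X_\theta\ra\Z_{m(\theta)}\times\Z(r)$ over the \emph{non-integer} points of $\Z(r)$, and to show that a fibre collapses to a single point exactly when a partial coincidence is available. First I would describe $f^{-1}(z)$ combinatorially for a non-integer $z=z_0z_1\ldots$. By Lemma~\ref{preimage_adding_element} a point $x\in f^{-1}(z)$ displays, for every $k$, the basic block $\theta^{k+1}(a_k)$ in the window $x[(z);k+1]$; since $\theta$ is admissible, $\theta^{k+1}$ is injective on letters, so $a_k$ is determined by $x$. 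Comparing consecutive windows yields the desubstitution recurrence $a_k=\theta(a_{k+1})_{z_{k+1}}$, and because $z$ is a non-integer these windows exhaust $\Z$, so $x\mapsto(a_k)_k$ is a bijection from $f^{-1}(z)$ onto the inverse limit of the maps $a\mapsto\theta(a)_{z_{k+1}}$. Unwinding the composition gives $a_0=\theta^{\,l-k}(a_l)_{\mu(k,l)}$ with $\mu(k,l)=\sum_{i=1}^{l-k} z_{k+i}\,r^{\,i-1}$, and since the relevant images are nested and decreasing one checks that $\#f^{-1}(z)=1$ \emph{iff} for all large $k$ there is an $l>k$ with the column $\mu(k,l)$ of $\theta^{\,l-k}$ constant.

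Next I would insert the height. Using $\gcd(r,m(\theta))=1$, an induction on $k$ gives the coloring identity $\chi(\theta^k(a)_m)\equiv r^k\chi(a)-m\pmod{m(\theta)}$, where $\chi(a)\in\Z_{m(\theta)}$ is the class of $a$ (the index $p$ with $a\in S_p$). Because $\al$ is the $\Z_{m(\theta)}$-coordinate of the equicontinuous factor, this identity forces all points of the refined fibre $\pi^{-1}(c,z)$ to share, at each level $k$, a common class $i_k=i_k(c,z)$ for their desubstitution letters. Consequently the analysis of the previous paragraph carries over verbatim with every column \emph{restricted to the relevant class}: $\#\pi^{-1}(c,z)=1$ iff for all large $k$ there is an $l>k$ with the column $\mu(k,l)$ of $\theta^{\,l-k}$ constant on $S_{i_l}$ — that is, exactly when a partial coincidence is met along $z$.

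The engine of both implications is a propagation lemma: if $\theta^{k_0}(\cdot)_{m'}$ is constant on $S_i$, then writing $\theta^{k}(a)=\theta^{\,k-k_0}(\theta^{k_0}(a))$ shows that $\theta^{k}(\cdot)_m$ is constant on $S_i$ for every $m$ in the block $[\,m'r^{\,k-k_0},(m'+1)r^{\,k-k_0})$, so a single partial coincidence produces constant columns over intervals of positive density. For ($\Leftarrow$), given a partial coincidence in $S_i$ at $(k_0,m')$, I would build a non-integer $z$ together with a matching $c$ whose digit windows realise the base-$r$ pattern of $m'$ in their top $k_0$ places while the induced classes $i_l$ return to $i$, and do so cofinally; the propagation lemma then gives $\#\pi^{-1}(c,z)=1$. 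A single singleton fibre upgrades to a residual set of singleton fibres because $y\mapsto\diam\pi^{-1}(y)$ is upper semicontinuous, so its zero set is a $\si$-invariant $G_\delta$, hence dense by minimality of the factor; this is precisely an almost automorphic extension. For ($\Rightarrow$) I would argue contrapositively: if no $S_i$ admits a partial coincidence, every restricted column is non-constant, so $\#\pi^{-1}(c,z)\geq 2$ for all non-integer $z$; since the non-integers are residual in $\Z_{m(\theta)}\times\Z(r)$, there is no residual set on which $\pi$ is injective, and the extension is not almost automorphic.

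I expect the main obstacle to be the construction of $z$ in the ($\Leftarrow$) direction. One must choose the digits so that the $r$-adic pattern condition (top digits encoding $m'$) and the $\Z_{m(\theta)}$ class condition ($i_l=i$) both hold cofinally, while keeping $z$ a genuine non-integer. Threading these two arithmetically independent constraints — a base-$r$ pattern and a period-$m(\theta)$ class cycle, reconciled through $\gcd(r,m(\theta))=1$ — through a single sequence is the delicate point; the remainder is bookkeeping with the desubstitution recurrence and the coloring identity.
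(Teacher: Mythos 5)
The paper never proves this statement: it is imported verbatim from Martin \cite{Martin} and used as a black box, so there is no in-paper argument to measure yours against. On its own terms, your proposal reconstructs what is essentially Martin's route — desubstitute a point of a fibre over a non-integer $z$ into a compatible letter sequence via Lemma \ref{preimage_adding_element}, identify $f^{-1}(z)$ with the inverse limit of the column maps $a\mapsto\theta(a)_{z_{k+1}}$, cut down to $\pi^{-1}(c,z)$ using the height classes, and read off that a fibre is a singleton exactly when the restricted columns become constant cofinally. The inverse-limit bookkeeping, the residuality upgrade via upper semicontinuity of the fibre diameter and minimality of the factor, and the contrapositive for the forward implication (the integers of $\Z(r)$ form a meager set, so singleton fibres concentrated there cannot yield a residual injectivity set) are all standard and correct.

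Two points need attention. First, a small one: with the paper's conventions the coloring identity should read $\chi(\theta^k(a)_m)\equiv\chi(a)+m\pmod{m(\theta)}$, since Proposition \ref{31_May_2018_Propn} sends consecutive letters $S_i\to S_{i+1}$ and $\theta^k(a)_0=a$ in canonical form; your $r^k\chi(a)-m$ has the wrong sign on $m$. This does not change the structure of the argument (all that matters is that the class of $\theta^k(a)_m$ is an invertible affine function of $\chi(a)$ and $m$), but it would matter when you thread the congruences in the construction of $z$. Second, and more substantively, the construction of $(c,z)$ in the ($\Leftarrow$) direction is described rather than carried out, and it is the one step that is a genuine proof obligation rather than routine. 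It does go through: at a cofinal sequence of levels $l$ you prescribe the $k_0$ digits $z_{l-k_0+1},\dots,z_l$ to encode $m'$ in base $r$, impose the single congruence $\sum_{j\leq l}z_j\equiv c-i\pmod{m(\theta)}$ so that $i_l=i$, and use arbitrarily long padding blocks between pattern occurrences to meet the congruence while keeping infinitely many digits equal to $0$ and infinitely many nonzero (so $z$ stays a non-integer); your propagation lemma then makes every $A_{k'}$ a singleton simultaneously from this one cofinal sequence. You should write that construction out explicitly, since it is the only place the coincidence hypothesis is actually consumed.
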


\begin{lem}[{\cite[Lemma 6.06]{Martin}}] 
If $\theta$ is a bijective constant-length substitution, then all equivalence classes of letters $S_i$ are equicardinal. 
\end{lem}

\begin{defn}[$P(i,j,k)$, {$[ab]$}, {\cite[Definition 8.06]{Martin}}] \label{Pijk_defn}
For $i\in\{0,\ldots,h(\theta)-1\}$, $k$ a positive integer, and $j\in\{0,\ldots,r^k-2\}$, define
$P(i,j,k):=\{\theta^k(p)[j,j+1]:p\in S_i\}$. 
When the set $\{P(i,j,k): i\in\{0,\ldots,h(\theta-1)\},k\in\N^+,j\in\{0,\ldots,r^k-2\}\}$ partitions the set of legal $2$-letter words $P_\theta$, we will write $[ab]$ for the equivalence class of $ab\in P_\theta$. 
\end{defn}

\begin{xmpl}
For the Thue-Morse substitution 
we have that $h(\theta)=1$, so $i=0$ and $P(0,0,1)=\{01,10\}$, $P(0,1,1)=\{00,11\}$; all other $P(i,j,k)$ coincide with one of these two classes. 
Thus, the $P(i,j,k)$ partition the set of legal words $P_\theta=\{00,01,10,11\}$. 
Also, for example, $[01]=\{01,10\}$. 
\end{xmpl}

\begin{xmpl}
We recall the substitution $\phi$ given in Example \ref{xmpl2}, 
$\phi(0)=0120, \ \phi(1)=1501,\ 
\phi(2)=2342, \ \phi(3)=3453,\ 
\phi(4)=4234, \ \phi(5)=5015$, with equivalence classes of letters $S_0=\{0,3\}, \ S_1=\{1,4\}, \ S_2=\{2,5\}$. 
We have that $P(0,0,1)=\{01,34\}$, $P(1,0,1)=\{15,42\}$, $P(2,0,1)=\{23,50\}$, $P(0,1,1)=\{12,45\}$  $P(0,2,1)=\{20,53\}$, which are all disjoint and include all possible versions of a $P(i,j,k)$. 
Thus, they partition the set of legal two-letter words for $\phi$. 
\end{xmpl}

\begin{thm}[{\cite[Theorem 8.08]{Martin}}]\label{Martins_theorem}
Given a bijective substitution $\theta$ of length $r$, the corresponding cascade $(X_\theta,\si)$ is an AI extension of its maximal equicontinuous factor $\Z_{h(\theta)}\times \Z(r)$ if and only if the following condition holds:

(A) The collection $\{P(i,j,k): i\in\{0,\ldots,h(\theta)-1\}, k\in\N^+, j\in\{0,\ldots,r^k-2\}\}$ is a partition of $P_\theta$. 
\end{thm}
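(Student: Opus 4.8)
The plan is to prove both implications by exhibiting the intermediate system that realises the AI tower $X_\theta\to X_\phi\to\Z_{m(\theta)}\times\Z(r)$, with the first arrow isometric and the second almost automorphic; condition (A) is precisely what is needed to build it. First I would record that the sets $P(i,j,k)$ always \emph{cover} $P_\theta$: it is standard that every legal two-letter word $ab$ appears inside $\theta^k(a)$ for some letter and all large $k$ (a pair straddling the boundary of two $\theta^k$-blocks becomes interior to a single $\theta^{k+1}$-block), so $ab=\theta^k(p)[j,j+1]$ with $p\in S_i$ for appropriate $i,j,k$. Hence (A) is equivalent to pairwise consistency of the $P(i,j,k)$, i.e.\ to the class $[ab]$ of Definition \ref{Pijk_defn} being well defined.

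For the \emph{if} direction, assume (A). Let $\B$ be the set of classes $[ab]$ and define $\Phi:X_\theta\to\B^\Z$ by the two-block map $\Phi(x)_n=[x_nx_{n+1}]$. By Theorem \ref{SBCMarcusLind} this is a sliding block code commuting with $\si$; set $X_\phi:=\Phi(X_\theta)$, a subshift factor of $X_\theta$, and let $\Psi:X_\phi\to\Z_{m(\theta)}\times\Z(r)$ be the induced map onto the maximal equicontinuous factor. I would then verify three things. (i) $X_\phi$ is again a substitution system: the two-block recoding of $\theta$ induces a constant-length substitution $\phi$ on $\B$ of length $r$, whose fixed points are the $\Phi$-images of those of $\theta$; this is the combinatorial heart of the novel presentation and needs a careful check that the induced map is well defined on $\B$. (ii) $\Phi$ is an \emph{isometric} extension of $X_\phi$: two points of a common fiber have identical class sequences, so by coincidence-freeness and the lemma that all $S_i$ are equicardinal the fibers have constant finite cardinality and are permuted simply transitively by a finite group $\Gamma$ acting fibrewise and commuting with $\si$; a bi-invariant metric on $\Gamma$ then descends to a continuous, $\si$-invariant $F$ on $K=\{(x,y):\Phi(x)=\Phi(y)\}$ restricting to a metric on each fiber. (iii) $\Psi$ is almost automorphic: I would produce a point of $X_\phi$ with singleton $\Psi$-fiber over a non-integer of $\Z(r)$, where the class structure forces a coincidence, and then invoke minimality together with Theorem \ref{aa_xtn_iff_partial_coinc} applied to $X_\phi$ to conclude injectivity on a residual set, namely the complement of the orbits of the fixed points of $\phi$.

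For the \emph{only if} direction I would argue by contraposition: if (A) fails, two of the sets $P(i,j,k)$ overlap without being equal, so there is a legal pair whose class is genuinely ambiguous. I would show that this ambiguity forces, in any factorisation $X_\theta\to Y\to\Z_{m(\theta)}\times\Z(r)$ with $Y$ almost automorphic over the base, a proximal pair inside a fiber of the first map that cannot be separated, contradicting that $X_\theta$ is an \emph{isometric} extension of $Y$; here Propositions \ref{Propn4.4} and \ref{no_prox_pairs} supply the control over the proximal and idempotent structure. The main obstacle I anticipate is step (ii): constructing the fibrewise group action and the $\si$-invariant metric $F$ rigorously, which is where the partition hypothesis, coincidence-freeness, and equicardinality of the $S_i$ must all be combined. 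By contrast, identifying the induced substitution $\phi$ in (i) is routine once the classes are shown well defined, and the almost automorphy in (iii) reduces to locating a single coincidence fiber.
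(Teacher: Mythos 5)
You are proposing a proof for a statement the paper does not actually prove: Theorem \ref{Martins_theorem} is imported verbatim from \cite{Martin} and used as a black box, so there is no in-paper argument to compare against. That said, the \emph{if} half of your plan is essentially the construction the paper then carries out in Section \ref{Factorizations_Chapter}: your step (i) is Theorem \ref{Thm_2_June_2018_pg_2} (with Corollary \ref{Corollary_31_May_2018_pg_2} supplying exactly the well-definedness of $\phi$ on $\B$ that you flag), and your step (iii) is the paper's almost-automorphy theorem together with Theorem \ref{1-1-outside_orbits}. Your anticipated obstacle in step (ii) is, moreover, easier than you fear: since $|P(i,j,k)|=|S_i|$ for a simple $\theta$, each letter of the relevant class $S_l$ is the first (and likewise the last) letter of \emph{exactly one} word in each equivalence class, so two points in a common $\Psi$-fiber that agree in a single coordinate agree everywhere, by induction forwards and backwards. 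Hence distinct points of a fiber differ in every coordinate, the discrete metric on fibers is already continuous and $\si$-invariant on $K$, and no fibrewise group action is required. Your preliminary observation that the $P(i,j,k)$ always cover $P_\theta$ (so that (A) amounts to pairwise consistency) is correct for primitive $\theta$ by the standard bounded-gaps argument.

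The genuine gap is the \emph{only if} direction, which is the substantive half of Martin's theorem and for which your proposal contains a statement of intent rather than an argument. You assert that an ambiguous class forces, in \emph{any} factorisation $X_\theta\to Y\to\Z_{m(\theta)}\times\Z(r)$ with $Y$ almost automorphic over the base, an unseparable proximal pair inside a fiber of $X_\theta\to Y$; but you give no mechanism connecting the combinatorial overlap of two sets $P(i,j,k)$ to the fibers of an \emph{arbitrary} intermediate system $Y$, which need not be the two-block recoding nor even a subshift. Propositions \ref{Propn4.4} and \ref{no_prox_pairs} control proximality through the Ellis semigroup, but they say nothing about where, within a fiber over the maximal equicontinuous factor, a proximal pair must sit relative to the unknown $Y$. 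What is actually needed is a structure-theoretic reduction (if any AI tower over $\Z_{m(\theta)}\times\Z(r)$ exists, then the canonical maximal isometric extension of $X_\theta$ over the base already realises one) followed by a combinatorial identification of that canonical intermediate factor with the two-block quotient, so that its failure to separate points is read off from the overlapping classes. Neither step appears in the proposal, so as written the converse implication is unproved.
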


We now proceed to develop the new presentation of the construction of the AI factor. 
For this, we will need to prove some additional results.

\begin{propn}\label{31_May_2018_Propn}
Let $\theta$ be of constant length $r$, primitive and in canonical (as in Hypothesis \ref{Hyp_2}) form. 
If $ab\in P_\theta$ and $a\in S_i$ for some $i$, then $b\in S_{i+1\mod h(\theta)}$. 
\end{propn}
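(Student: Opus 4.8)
The plan is to read everything off the positions of the letter $0$ inside the fixed point $x_\theta$ and to exploit twice the canonical form of $\theta$. The guiding observation is that $z(\cdot)$ is defined through occurrences of $0$ in the blocks $\theta(a)$, and that $0$ is precisely the letter controlling $d_\theta$ and hence $m(\theta)$; so if I can locate the first $0$ of $\theta(a)$ and of $\theta(b)$ as \emph{absolute} positions inside $x_\theta$, their residues mod $m(\theta)$ will be forced, and the two residues will differ in a controlled way.

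First I would use primitivity. Since $\theta$ is primitive, $(X_\theta,\sigma)$ is minimal and $x_\theta$ is recurrent, so the legal word $ab\in P_\theta$ actually occurs in $x_\theta$, and in fact at some strictly positive position: there is $m\geq 1$ with $x_\theta[m]=a$ and $x_\theta[m+1]=b$. Because $x_\theta$ is a fixed point of $\theta$ and the condition $\theta(0)_0=0$ from Hypothesis \ref{Hyp_2} anchors the natural block decomposition at the origin, the block $\theta(a)=\theta(x_\theta[m])$ occupies positions $[mr,mr+r-1]$ and $\theta(b)$ occupies $[(m+1)r,(m+1)r+r-1]$.

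Next I would convert $z(a)$ and $z(b)$ into statements about absolute positions of $0$. Let $n_a=\min\{n\geq0:\theta(a)_n=0\}$, so that $z(a)\equiv n_a \pmod{m(\theta)}$; then the position $mr+n_a\geq 1$ carries a $0$, hence lies in $M$, and since $m(\theta)=d_\theta^{*}$ divides $d_\theta=\gcd M$, every element of $M$ is $\equiv 0 \pmod{m(\theta)}$. Therefore $mr+n_a\equiv 0$, i.e.\ $z(a)\equiv -mr \pmod{m(\theta)}$, and by the identical argument $z(b)\equiv -(m+1)r \pmod{m(\theta)}$. Using the \emph{same} $m$ for both letters, subtraction yields the clean relation $z(b)\equiv z(a)-r \pmod{m(\theta)}$, with no need to know the individual residues or to prove that each letter sits in a single residue class.

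The one remaining ingredient, which I regard as the crux, is to upgrade the shift by $r$ to a shift by $1$, i.e.\ to show $r\equiv 1 \pmod{m(\theta)}$. Here the canonical form pays off a second time: $\theta(0)_{r-1}=0$ together with $x_\theta[0]=0$ forces $x_\theta[r-1]=\theta(0)_{r-1}=0$, and since $r\geq 2$ this gives $r-1\in M$; hence $m(\theta)\mid d_\theta\mid (r-1)$, that is $r\equiv 1 \pmod{m(\theta)}$. Combining, $z(b)\equiv z(a)-1 \pmod{m(\theta)}$, so if $a\in S_i$ (i.e.\ $z(a)\equiv -i$) then $z(b)\equiv -(i+1)$ and $b\in S_{i+1\bmod m(\theta)}$, as required. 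The only point that then needs a word of care is that $z(a)$ and $z(b)$ be well defined, i.e.\ that $\theta(a)$ and $\theta(b)$ each contain a $0$; by primitivity the power of $\theta$ fixed in Hypothesis \ref{Hyp_2} may be taken large enough that every $\theta(a)$ contains the letter $0$, so this is covered by the standing hypotheses.
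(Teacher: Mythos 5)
Your proof is correct and rests on the same two congruences the paper's proof uses: that all positive positions of $0$ in the fixed point are $\equiv 0 \pmod{m(\theta)}$, and that $r\equiv 1\pmod{m(\theta)}$, the latter extracted from the canonical form exactly as you do. The only real difference is one of bookkeeping --- you read the residues of $z(a)$ and $z(b)$ off \emph{absolute} positions of an occurrence of $ab$ inside $x_\theta$ (using primitivity to place that occurrence at a positive index), whereas the paper works with relative positions inside the legal word $\theta(ab)$, measuring the length of the subword joining the first $0$ of $\theta(a)$ to the first $0$ of $\theta(b)$; both reduce to the same arithmetic.
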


\begin{proof}
By definition of $h(\theta)$, whenever $w=w_0\ldots w_n$ is a finite $\theta$-legal word with $w_0=w_n=0$, then the indexes $0\equiv n\mod h(\theta)$, and so $|w|=n+1\equiv 1 \mod h(\theta)$. In particular, $r\equiv 1 \mod h(\theta)$ (*). 

If $ab\in P_\theta$, then $\theta(ab)$ is a $\theta$-legal word of length $2r$. Let $a\in S_i$, $b\in S_j$, so if $\theta(a)=\al_0\ldots\al_{r-1}$, then the index $i'$ of the first letter where $0$ occurs is congruent to $-i\mod h(\theta)$. In other words, $i'\cong-i\mod h(\theta)$ (**). 
Similarly, if $\theta(b)=\be_0\ldots\be_{r-1}$, then the first $j'$ such that $\be_{j'}=0$ satisfies $j'\cong -j\mod h(\theta)$ (***). 
(By definition of $S_i$, $S_j$, respectively.) 
Let $w$ be the subword of $\theta(ab)$ defined as $w=\al_{i'}\al_{i'+1}\ldots \al_{r-1}\be_0\ldots\be_{j'}$. 
Since $\al_{i'}=\be_{j'}=0$, by the remark above we have that $|w|\cong 1\mod h(\theta)$. 
Also, by direct calculation, $|w|=|\al_{i'}\ldots\al_{r-1}|+|\be_0\ldots\be_{j'}|=(r-1-i'+1)+(j'+1)=r-i'+j'+1$. 
So we have
\begin{align*}
1&\cong r-i'+j'+1\mod h(\theta) & \\
0&\cong r+i-j\mod h(\theta) &\textrm{ by (**) and (***) }\\
j&\cong r+i\mod h(\theta) &\textrm{ by modular arithmetic }\\
j&\cong i+1 \mod h(\theta) &\textrm{ by (*). }
\end{align*}
Since all indexes of $S_i$ are elements of $\{0,\ldots,h(\theta)-1\}$, this means that $j=i+1\mod h(\theta)$, as required. 
\end{proof}

\begin{cor}\label{Corollary_31_May_2018_pg_2}
If in addition to the conditions of Proposition \ref{31_May_2018_Propn}, $\theta$ is bijective, for each $P(i,j,k)$ there exists a unique $S_i$ such that 
$$
ab\in P(i,j,k) \textrm{ implies that $a\in S_i$, and } b\in S_{i+1\mod h(\theta)}.
$$

Moreover, for all $a\in S_i$, there exists a letter $b\in S_{i+1\mod h(\theta)}$ such that $ab\in P(i,j,k)$. 
\end{cor}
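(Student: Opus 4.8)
The plan is to reduce everything to a single ``phase'' computation governed by Proposition \ref{31_May_2018_Propn}. The key preliminary observation is that, because $\theta$ is in the canonical form of Hypothesis \ref{Hyp_2}, we have $\theta(a)_0=a$ for every letter $a$, and hence by an immediate induction $\theta^k(p)_0=p$ for all $k$; thus the basic block $\theta^k(p)$ begins with the letter $p$ itself. Since $\theta^k(p)$ is a basic block it is $\theta$-legal, so every consecutive pair $\theta^k(p)[t,t+1]$ lies in $P_\theta$ and Proposition \ref{31_May_2018_Propn} applies to it. Reading that proposition as the assertion that along any legal word the $S$-index increases by one modulo $m(\theta)$ at each step, and starting from $\theta^k(p)_0=p\in S_i$, I would conclude by induction on $t$ that $\theta^k(p)_t\in S_{(i+t)\bmod m(\theta)}$ for every $t\in\{0,\ldots,r^k-1\}$.

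With this identity in hand, the first part is bookkeeping. Fix $p\in S_i$; taking $t=j$ shows the first letter $a=\theta^k(p)_j$ lies in $S_{i^*}$, where $i^*:=(i+j)\bmod m(\theta)$, and crucially this index does not depend on the choice of $p\in S_i$. Hence every word $ab\in P(i,j,k)$ has its first letter in the single class $S_{i^*}$; since $S_0,\ldots,S_{m(\theta)-1}$ partition $\A$ and $P(i,j,k)$ is nonempty (as $S_i\neq\varnothing$), this class is uniquely determined. Taking $t=j+1$ (legitimate because $j\in\{0,\ldots,r^k-2\}$), or equivalently applying Proposition \ref{31_May_2018_Propn} once more, gives $b=\theta^k(p)_{j+1}\in S_{(i^*+1)\bmod m(\theta)}$. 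Relabelling $i^*$ as $i$ yields the statement exactly as written.

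For the ``moreover'' clause I would upgrade the inclusion of images to a surjection by exploiting coincidence-freeness. Since $\theta$ is coincidence-free, each column map $a\mapsto\theta(a)_n$ (for $0\leq n<r$) is injective, hence a bijection of the finite alphabet $\A$; writing $j$ in base $r$ exhibits $\Gamma\colon a\mapsto\theta^k(a)_j$ as a composition of such column maps, so $\Gamma$ is itself a bijection of $\A$. By the previous paragraph $\Gamma$ carries $S_i$ into $S_{i^*}$, and since $\Gamma$ is injective while $|S_i|=|S_{i^*}|$ (the classes being equicardinal for coincidence-free $\theta$, by the preceding lemma), the restriction $\Gamma|_{S_i}\colon S_i\to S_{i^*}$ is forced to be a bijection. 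Thus for every $a\in S_{i^*}$ there is some $p\in S_i$ with $\theta^k(p)_j=a$, and then $b:=\theta^k(p)_{j+1}\in S_{(i^*+1)\bmod m(\theta)}$ supplies the required word $ab\in P(i,j,k)$.

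I expect the main obstacle to be precisely this surjectivity in the last step. The existence and uniqueness of $S_{i^*}$ follow almost formally once Proposition \ref{31_May_2018_Propn} is available, but the ``moreover'' genuinely needs two extra inputs working together: the bijectivity of the iterated column map $\Gamma$ and the equicardinality of the classes $S_i$, without which one obtains only the inclusion $\Gamma(S_i)\subseteq S_{i^*}$ rather than equality. The delicate point is therefore to confirm that the standing hypotheses on $\theta$ (primitivity, the coincidence-free canonical form, and simplicity) really do guarantee both facts, so that the injection $\Gamma|_{S_i}$ must be onto $S_{i^*}$.
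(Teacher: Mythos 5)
Your proof is correct, and its first half --- iterating Proposition \ref{31_May_2018_Propn} along the legal word $\theta^k(p)$ starting from $\theta^k(p)_0=p$ to get $\theta^k(p)_t\in S_{(i+t)\bmod m(\theta)}$ --- is exactly the paper's argument, just with the induction spelled out. Where you genuinely diverge is the \emph{moreover} clause. The paper derives surjectivity from a cardinality count: simplicity gives $|P(i,j,k)|=|S_i|$, i.e.\ the map $p\mapsto\theta^k(p)[j,j+1]$ is injective on $S_i$, and from this together with simplicity it concludes that every $a\in S_{i^*}$ occurs as a first letter. You instead show that the iterated column map $\Gamma: a\mapsto\theta^k(a)_j$ is a bijection of $\A$ (a composition of column maps, each injective by coincidence-freeness, which is built into the canonical form of Hypothesis \ref{Hyp_2}), and then combine the injectivity of $\Gamma|_{S_i}$ with the equicardinality of the classes. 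Your route buys two things: it does not invoke simplicity at all for this clause, and it is tighter at the one delicate point --- distinctness of the two-letter words $\theta^k(p)[j,j+1]$ for $p\in S_i$ does not by itself force distinctness of their \emph{first} letters, so the paper's count $|P(i,j,k)|=|S_i|$ needs to be supplemented by exactly the injectivity of $p\mapsto\theta^k(p)_j$ that your $\Gamma$ supplies and that the paper leaves implicit. The paper's cardinality identity is still worth recording separately, since it is what is used later to see that $\Psi$ in Theorem \ref{Thm_2_June_2018_pg_2} is $|P(i,j,k)|$-to-$1$, but for the statement as given your argument is complete and arguably the cleaner of the two.
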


\begin{proof}
By definition, $P(i,j,k):=\{\theta^k(p)[j,j+1]:p\in S_i\}$, so $|P(i,j,k)|\leq |S_i|$. 
Since $\theta$ is bijective, $|P(i,j,k)|=|S_i|$(*). 
By Proposition \ref{31_May_2018_Propn}, $\theta^k(p)(j)\in S_{i+j\mod h(\theta)}$ and so indeed there exists a unique $S_{i+j\mod h(\theta)}$ such that $ab\in P(i,j,k)\rightarrow a\in S_{i+j\mod h(\theta)}, \ b\in S_{i+j+1\mod h(\theta)}$. 
Also by (*) and since $\theta$ is bijective, we conclude that for all $a\in S_{i+j\mod h(\theta)}$ there exists a $b\in S_{i+j+1\mod h(\theta)}$ such that $ab\in P(i,j,k)$. 
\end{proof}

Now we move onto one of our main theorems - that the intermediate space $X_\phi$ postulated in Theorem \ref{Martins_theorem} is in fact a substitution system, with the homomorphism between the spaces being a sliding block code. 
We recall that $\theta$ is a fixed substitution which satisfies condition (A) from Theorem \ref{Martins_theorem}, with $(X_\theta,s)$ being the associated shift space. 
From Martin's proof of this theorem, we already know $X_\phi$ is a compact Hausdorff space such that there is an action of $\Z$ on $X_\phi$ which makes $(X_\phi,\Z)$ is the intermediate space between the substitution system $(X_\theta,s)$ and its maximal equicontinuous factor. 
However, the fact that this intermediate space is a substitution system, and the description of the homomorphism between $X_\theta$ and $X_\phi$, is new.

Informally, we construct a map $\Psi:\mathcal{X}_\theta\rightarrow\mathcal{X}_\phi$ which is defined via an `encoding' of the set $P_\theta$ of aperiodic two-letter words. This encoding sends all two-letter words in the same partition $P_j$ of the set $P_\theta$ to a given letter in $\B$. 
Explicitly defining this $\Psi$ so that it maps fixed points of $\theta$ to fixed points of $\phi$ requires careful combinatorial choices, detailed in the proof which we now give. 

\begin{thm}\label{Thm_2_June_2018_pg_2}
Let $\theta$ be a bijective substitution in canonical form (as in Hypothesis \ref{Hyp_2}) of length $r$ over $\A$ and let $P(i,j,k)$ partition $P_\theta$ into $n$ equivalence classes. 
Then there exists a substitution $\phi$ on $\B=\{0,\ldots,n-1\}$ and a sliding block code $\Psi:\mathcal{X}_\theta\rightarrow\mathcal{X}_\phi$. 
In fact, we also show that this is a $|P(i,j,k)|$-to-$1$ extension.
\end{thm}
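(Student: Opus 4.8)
The plan is to realise $\Psi$ as the $2$-block map recording the $P(i,j,k)$-class of each consecutive pair, and then read off the substitution $\phi$ that it induces on classes. Concretely, identify $\B=\{0,\dots,n-1\}$ with the $n$ equivalence classes of the partition of $P_\theta$ from Definition \ref{Pijk_defn}, writing $[ab]\in\B$ for the class of a legal pair $ab$, and define
$$\Psi(x)_i := [\,x_i x_{i+1}\,],\qquad x\in X_\theta,\ i\in\Z.$$
Since $\Psi(x)_0$ depends only on $x[-1,1]$ and $\Psi$ visibly commutes with the shift, Theorem \ref{SBCMarcusLind} shows $\Psi$ is a sliding block code (memory $0$, anticipation $1$).

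For the target substitution I would set, for $[ab]\in\B$,
$$\phi([ab])_l := [\,\theta(ab)_l\,\theta(ab)_{l+1}\,],\qquad l=0,\dots,r-1,$$
so that $\phi$ is again of constant length $r$. The first thing to verify is that this is independent of the representative $ab$. Writing the class as $P(i,j,k)$ and using $\theta(ab)=\theta^{k+1}(p)[jr,\,jr+2r-1]$ for any $p\in S_i$ with $\theta^k(p)[j,j+1]=ab$, a direct index computation identifies $[\theta(ab)_l\theta(ab)_{l+1}]$ with the class $P(i,\,jr+l,\,k+1)$; and since for $l<r-1$ this class equals $\{\theta(a)[l,l+1] : a \text{ a first letter of } P(i,j,k)\}$ while for $l=r-1$ it equals $\{\theta(a)_{r-1}\theta(b)_0 : ab\in P(i,j,k)\}$, it depends only on the \emph{set} $P(i,j,k)$ and on $l$, hence only on the class $[ab]$. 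With $\phi$ well defined, splitting each index as $i=mr+l$ gives the intertwining $\Psi\circ\theta=\phi\circ\Psi$ by a one-line check. In particular $\Psi(x_\theta)$ is a $\phi$-fixed point, $\phi$ inherits primitivity from $\theta$, and $\Psi(X_\theta)=\overline{O_{\Psi(x_\theta)}}=X_\phi$, so $\Psi:\mathcal{X}_\theta\to\mathcal{X}_\phi$ is a factor map.

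For the fibre cardinality the central fact is the reconstruction principle supplied by Corollary \ref{Corollary_31_May_2018_pg_2}: within a class $P(i,j,k)$ the assignment (first letter) $\mapsto$ (second letter) is a \emph{bijection} $S_i\to S_{i+1\bmod m(\theta)}$. Hence, given $y\in X_\phi$, any preimage $x$ is a ``thread'' satisfying $[x_ix_{i+1}]=y_i$, and knowing one letter $x_{i_0}$ determines $x_{i_0+1}$ (forwards, from $y_{i_0}$) and $x_{i_0-1}$ (backwards, from the inverse bijection), so $x$ is determined by $y$ together with $x_0$. The index $i$ with first letters in $S_i$ is read off unambiguously from $y_0$ by Proposition \ref{31_May_2018_Propn}, and as $x_0$ ranges over the $|S_{i}|=|P(i,j,k)|$ admissible first letters one obtains at most $|P(i,j,k)|$ threads, pairwise distinct at every coordinate; this yields the upper bound $|\Psi^{-1}(y)|\le |P(i,j,k)|$.

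The remaining, and hardest, point is the lower bound: that \emph{every} admissible first letter actually yields a point of $X_\theta$. Here I would combine two ingredients. First, Theorem \ref{Martins_theorem} guarantees that, under hypothesis (A), $\mathcal{X}_\theta$ is an \emph{isometric} extension of the intermediate system, whose fibres are therefore all equicardinal, so it suffices to produce a single fibre of full size $|P(i,j,k)|$. Second, to produce one I would show that a suitable legal $X_\phi$-word lifts over each of its admissible first letters: since $\phi$ is primitive, such a word $C$ occurs in $\phi^k(\beta)$ for every $\beta\in\B$ and all large $k$, the iterated intertwining $\phi^k([ab])=\Psi(\theta^k(ab))[0,r^k-1]$ (applying $\Psi$ blockwise) realises these occurrences as $\Psi$-images of genuine $\theta^k$-blocks, and the surjectivity clause of Corollary \ref{Corollary_31_May_2018_pg_2} (every $a\in S_i$ is a first letter of some pair in $P(i,j,k)$) is precisely what should make all first letters of $S_i$ appear. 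The main obstacle is this last step: controlling, uniformly over all occurrence positions and word lengths, that the first letters of the lifts exhaust $S_i$, which is where the combinatorics of the $P(i,j,k)$ and the self-similarity of $\theta$ must be pushed hardest.
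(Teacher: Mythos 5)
Your construction is, up to a reflection of the coding window, exactly the paper's: the paper codes by $\Psi(x)_i=[x_{i-1}x_i]$ (memory $1$, anticipation $0$) and defines $\phi(b)_0=b$, $\phi(b)_h=[\theta(a_b)(h-1,h)]$ using a \emph{last} letter $a_b$ of a pair in the class $b$, whereas you code by $[x_ix_{i+1}]$ and read $\phi$ off the first letter, placing the identity coordinate at position $r-1$ instead of $0$. Both well-definedness checks rest on Corollary \ref{Corollary_31_May_2018_pg_2} (your re-indexing of $[\theta(ab)_l\theta(ab)_{l+1}]$ as the class $P(i,jr+l,k+1)$ is a clean way to see it), and both verify the sliding-block-code property via Theorem \ref{SBCMarcusLind}. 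The substantive difference is that the paper's proof \emph{stops there}: it does not verify $\Psi(X_\theta)=X_\phi$, it defers the intertwining $\Psi\circ\theta=\phi\circ\Psi$ to the lemma that follows, and it never addresses the $|P(i,j,k)|$-to-$1$ claim at all. So your last two paragraphs are additions rather than an alternative route, and the upper bound (threads determined by a single coordinate via the bijections first letter $\mapsto$ last letter inside each class) is correct as you give it.

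The one soft spot is your lower bound. Invoking Theorem \ref{Martins_theorem} to get equicardinal fibres of $\Psi$ presupposes that your $\mathcal{X}_\phi$ \emph{is} the intermediate system postulated there, which is part of what this theorem is supposed to establish; as written the appeal is circular. It is also unnecessary. The $|P_\theta|$ two-sided fixed points $\lim_n\theta^n(a\cdot b)$, for $ab\in P_\theta$, are pairwise distinct points of $X_\theta$, and by the intertwining relation $\Psi$ sends $\lim_n\theta^n(a\cdot b)$ and $\lim_n\theta^n(c\cdot d)$ to the same fixed point of $\phi$ precisely when $[ab]=[cd]$; this exhibits one fibre of size exactly $|P(i,j,k)|$ over each $\phi$-fixed point with no appeal to Martin. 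Equicardinality of the remaining fibres then follows from minimality by a compactness argument: approximate an arbitrary $y\in X_\phi$ by shifts of a $\phi$-fixed point, take limits of the full fibres over those shifts, and use your own observation that distinct threads in a fibre differ at \emph{every} coordinate (hence stay uniformly separated and cannot collapse in the limit). This replaces the step you flag as hardest with the self-similarity you have already used.
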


\begin{proof}
Let us label the partitions of $P_\theta$ as $P_0,\ldots,P_{n-1}$ with the rule that the \emph{last} letters of $P_0$ belong to $S_0$ (so in particular, for some $a\in \A$, $a0\in P_0$). 
For $ab\in P_\theta$, define $[ab]:=k$, where $ab \in P_k$ (since the $P(i,j,k)$ partition $P_\theta$, this $k$ is uniquely defined for any $ab \in P_\theta$). 
For $b\in \{0,\ldots n-1\}$, let $a_b$ be any last letter of a word in $P_b$. 
Define $\phi(b)=b_0\ldots b_{r-1}$ by $b_0=b$ and $b_h:=[\theta(a_b)(h-1,h)]$. 
Note that by Corollary \ref{Corollary_31_May_2018_pg_2}, $\phi(b)$ does not depend on the particular choice of $a_b$ - if $c,d\in S_i$, 
then $[\theta(c)(h-1,h)]=[\theta(d)(h-1,h)]$ for all $h\in\{1,\ldots,r-1\}$, by definition of $P(i,j,k)$. 
Now let $\Psi:\mathcal{X}_\theta\rightarrow\mathcal{X}_\phi$ be the sliding block code defined by $\Psi(x)_i=[x_{i-1}x_i]=[x(i-1,i)]$. 
We use Theorem \ref{SBCMarcusLind} to confirm that $\Psi$ is indeed a sliding block code by checking $\Psi\circ\si_\theta=\si_\phi\circ\Psi$. 
For $x\in X_\theta$, $\Psi(\si(x))_i=[\si(x)(i-1,i)]=[x(i,i+1)]=\Psi(x)_{i+1}=\si(\Psi(x))$, as required.
\end{proof}

\begin{xmpl}
To continue with our Example \ref{xmpl2}, recall the substitution defined there:  
\begin{align*}
\phi(0)&=0120, & \phi(1)&=1501,\\
\phi(2)&=2342, & \phi(3)&=3453,\\
\phi(4)&=4234, & \phi(5)&=5015.
\end{align*} 
The  equivalence classes of letters are $S_0=\{0,3\}, \ S_1=\{1,4\}, \ S_2=\{2,5\}$, with diagram of proximal pairs given by 

\begin{center}
\includegraphics[scale=0.4]{Proximal_Pairs_0120.png}
\end{center}

Moreover, recall that the set $P_\phi=\{01,34,15,42,23,50,12,45,20,53\}$ is partitioned by the $P(i,j,k)$. Here, we relabel the respective equivalence classes of $\phi$-legal $2$-letter words $P(i,j,k)$ in the style of the proof of the above theorem:
\begin{align*}
 P_0&=\{01,34\}, &   P_1&=\{15,42\} \\
 P_2&=\{23,50\}, &   P_3&=\{12,45\},\\
 P_4&=\{20,53\}. &&
 \end{align*}
Thus, following the construction detailed in the proof above, we have that the intermediate space is given by the substitution $\phi'$ on the $5$-letter alphabet $\B$: 
\begin{align*}
\phi'(0)&=0120 , & \phi'(1)&=1201 , \\
\phi'(2)&=2034 , &\phi'(3)&=3201 , \\
\phi'(4)&=4034 . &&
\end{align*}
We note the structure of its proximal fixed points:
\begin{center}
\includegraphics[scale=0.4]{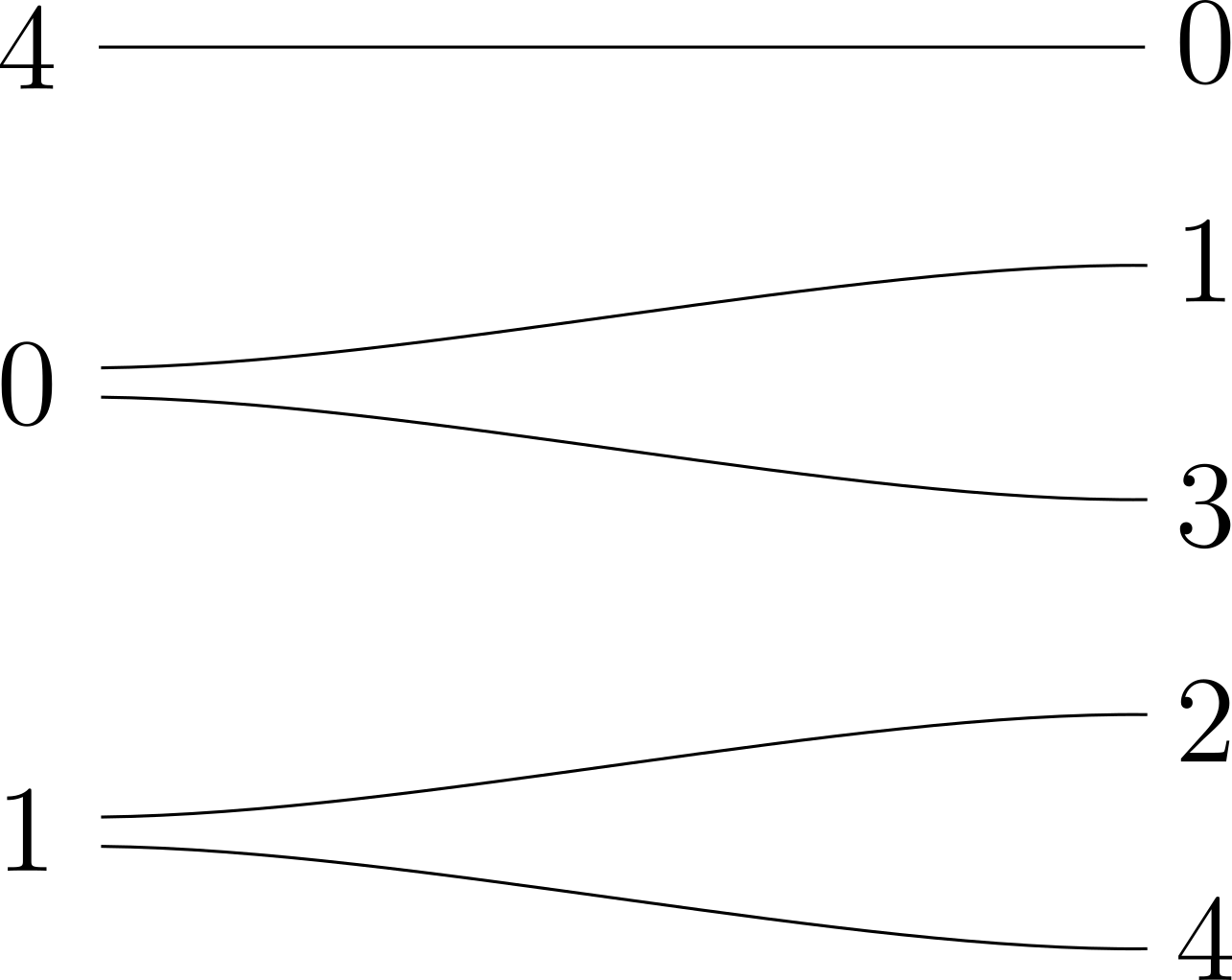}
\end{center}
By Theorem \ref{aa_xtn_iff_partial_coinc}, we can see that the cascade associated with $\phi'$ is an almost-automorphic extension of the maximal equicontinuous factor (which both $\phi$ and $\phi'$ share), namely $\Z_3\times\Z(4)$. 
This observation will be useful later as well, in the proof given of Theorem \ref{alt_prf}. 
\end{xmpl}

To be able to explore the properties of the Ellis semigroups of the shift spaces $X_\theta$ and $X_\phi$, we will need to further determine the structure of $X_\phi$ and the homomorphism from it to the maximal equicontinuous factor. We begin with the following lemmas. 

\begin{lem}
$\Psi(\theta(x))=\phi(\Psi(x))$.
\end{lem}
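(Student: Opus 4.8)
The plan is to verify the identity one coordinate at a time: both $\Psi(\theta(x))$ and $\phi(\Psi(x))$ are bi-infinite words over $\B$, so it suffices to show they agree at every index $i\in\Z$. First I would record the block decompositions coming from the fact that $\theta$ and $\phi$ both have constant length $r$. For $\theta(x)$ this reads $\theta(x)_{jr+h}=\theta(x_j)_h$, and for $\phi(\Psi(x))$ it reads $\phi(\Psi(x))_{jr+h}=\phi(\Psi(x)_j)_h=\phi([x_{j-1}x_j])_h$, both valid for all $j\in\Z$ and $h\in\{0,\ldots,r-1\}$. Writing an arbitrary index uniquely as $i=jr+h$ with $0\leq h\leq r-1$, the verification splits into an interior case $h\geq1$ and a boundary case $h=0$.

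For the interior case $h\geq1$, I would compute $\Psi(\theta(x))_{jr+h}=[\theta(x)_{jr+h-1}\,\theta(x)_{jr+h}]=[\theta(x_j)_{h-1}\,\theta(x_j)_h]$, since both coordinates lie inside the single block $\theta(x_j)$. On the other side, with $b=[x_{j-1}x_j]$, the defining rule gives $\phi(b)_h=[\theta(a_b)_{h-1}\,\theta(a_b)_h]$ for any last letter $a_b$ of a word in $P_b$. The key observation is that $x_j$ is itself such a last letter, because $x_{j-1}x_j\in P_b$ by the very definition of $b$; since by Corollary \ref{Corollary_31_May_2018_pg_2} the value $\phi(b)_h$ does not depend on the chosen representative, I may take $a_b=x_j$ and obtain $\phi([x_{j-1}x_j])_h=[\theta(x_j)_{h-1}\,\theta(x_j)_h]$, which matches the left-hand side.

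The boundary case $h=0$ is where the canonical form of Hypothesis \ref{Hyp_2} does the work, and this is the step I expect to be the main (if modest) obstacle. Here $\Psi(\theta(x))_{jr}=[\theta(x)_{jr-1}\,\theta(x)_{jr}]=[\theta(x_{j-1})_{r-1}\,\theta(x_j)_0]$, where the two letters now straddle the junction between the blocks $\theta(x_{j-1})$ and $\theta(x_j)$, whereas $\phi([x_{j-1}x_j])_0=[x_{j-1}x_j]$ by the rule $b_0=b$. To reconcile these I would invoke that $\theta$ is in standard form, so $\theta(a)_0=\theta(a)_{r-1}=a$ for every letter $a$; taking $a=x_{j-1}$ and $a=x_j$ yields $\theta(x_{j-1})_{r-1}=x_{j-1}$ and $\theta(x_j)_0=x_j$, whence $[\theta(x_{j-1})_{r-1}\,\theta(x_j)_0]=[x_{j-1}x_j]$, again matching.

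Having matched both cases at an arbitrary coordinate, the identity $\Psi(\theta(x))=\phi(\Psi(x))$ follows. The only genuinely delicate point is the boundary coordinate, and it is precisely there that the normalization $\theta(a)_0=\theta(a)_{r-1}=a$ is indispensable: without it, the two-letter word read across a block junction need not reduce to $x_{j-1}x_j$, and the commutation relation would fail.
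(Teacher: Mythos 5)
Your proposal is correct and follows essentially the same route as the paper's own proof: decompose the index as $i=jr+h$, treat the boundary case $h=0$ via the canonical form $\theta(a)_0=\theta(a)_{r-1}=a$, and treat the interior case via the definition of $\phi$ together with the representative-independence from Corollary \ref{Corollary_31_May_2018_pg_2}. Your explicit justification that $x_j$ may be taken as the representative $a_b$ (since $x_{j-1}x_j\in P_b$) makes a step fully precise that the paper only cites implicitly.
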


\begin{proof}
We want to show $\Psi(\theta(x))_i=\phi(\Psi(x))_i$ for any $i\in \Z$. 
Let $i=mr+n$. We have two possibilities. 

\emph{Case 1 - $n=0$. } 
$\Psi(\theta(x))_i=[\theta(x)_{i-1}\theta(x)_i]=[\theta(x_{m-1})_{r-1}\theta(x_m)_0]=[x_{m-1}x_m], $ since $\theta$ is in canonical form and so $\theta(a)_0=\theta(a)_{r-1}=a$ for all $a\in \A$. 
Also, $\phi(\Psi(x))_i=\phi(\psi(x)_m)_0=\phi([x_{m-1}x_m])_0=[x_{m-1}x_m],$ by definition of $\phi$. 

\emph{Case 2 - $n\in\{1,\ldots,r-1\}$. } 
$\Psi(\theta(x))_i=[\theta(x)_{i-1}\theta(x)_i]=[\theta(x_m)_{n-1}\theta(x_m)_n]=[\theta(x_m)(n-1,n)]$. 
Also, $\phi(\Psi(x))_i=\phi(\Psi(x)_m)_n=\phi([x_{m-1}x_m])_n=[\theta(x_m)(n-1,n)]$, as required, where the final equality holds by definition of $\phi$ and Corollary \ref{Corollary_31_May_2018_pg_2}. 
\end{proof}

\begin{lem}\label{divides}
$h(\theta)=h(\phi)$.
\end{lem}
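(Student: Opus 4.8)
The plan is to establish the two divisibilities $m(\theta)\mid m(\phi)$ and $m(\phi)\mid m(\theta)$ separately. Since each $\phi(b)=b_0\ldots b_{r-1}$, the substitution $\phi$ has the same constant length $r$ as $\theta$, so the star-operation $n\mapsto n^{*}$ appearing in both heights is governed by the same $r$; it therefore suffices to compare the largest divisors of $d_\theta$ and $d_\phi$ that are coprime to $r$. I first record that $\phi$ is itself admissible of constant length $r$: distinct letters $a\neq b$ give $\phi(a)_0=a\neq b=\phi(b)_0$, so $\phi$ is injective on letters, and since $\Psi$ is a finite-to-one extension of the infinite system $X_\theta$, its image $X_\phi$ is infinite and the fixed point of $\phi$ is aperiodic. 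Hence the theorem of Martin computing the maximal equicontinuous factor applies to $\phi$ as well, giving that the maximal equicontinuous factor of $X_\phi$ is $\Z_{m(\phi)}\times\Z(r)$.

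For $m(\theta)\mid m(\phi)$ I argue combinatorially. Let $x_\phi$ be the special point of $\phi$, so $(x_\phi)_0=0$, and write $x_\phi=\Psi(x')$ with $x'\in X_\theta$, which is possible as $\Psi$ is onto. By definition $(x_\phi)_i=[x'_{i-1}x'_i]$, so $(x_\phi)_i=0$ exactly when $x'_{i-1}x'_i\in P_0$. By the labelling chosen in the proof of Theorem \ref{Thm_2_June_2018_pg_2}, every word of $P_0$ has its last letter in $S_0$; in particular $(x_\phi)_0=0$ forces $x'_0\in S_0$. Applying Proposition \ref{31_May_2018_Propn} to the consecutive (hence $\theta$-legal) pairs of $x'$ and propagating from this anchor yields $x'_i\in S_{i\bmod m(\theta)}$ for every $i$. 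Consequently $(x_\phi)_i=0$ implies $x'_i\in S_0$, i.e. $i\equiv 0\bmod m(\theta)$, so every positive occurrence of $0$ in $x_\phi$ sits at a multiple of $m(\theta)$. Thus $m(\theta)\mid d_\phi$, and since $m(\theta)$ is already coprime to $r$ and $a\mid b$ implies $a^{*}\mid b^{*}$, this gives $m(\theta)=m(\theta)^{*}\mid d_\phi^{*}=m(\phi)$.

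For the reverse divisibility I pass to maximal equicontinuous factors. The map $\Psi\colon X_\theta\to X_\phi$ is a factor map, so composing with the factor map $X_\phi\to\Z_{m(\phi)}\times\Z(r)$ and then projecting realises $\Z_{m(\phi)}$ as an equicontinuous factor of $X_\theta$. By maximality it factors through $\Z_{m(\theta)}\times\Z(r)$, giving a factor map $\Z_{m(\theta)}\times\Z(r)\to\Z_{m(\phi)}$. Since $\Z(r)$ is pro-$r$ while $m(\phi)$ is coprime to $r$, the $\Z(r)$-coordinate must act trivially, so the map factors through $\Z_{m(\theta)}$ and forces $m(\phi)\mid m(\theta)$. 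Combining the two divisibilities gives $m(\theta)=m(\phi)$.

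The main obstacle is the bookkeeping in the combinatorial direction: a symbol $0$ of $X_\phi$ encodes a two-letter pattern of $X_\theta$ (the class $P_0$) rather than a single letter, so the residue constraint cannot simply be read off one coordinate. What makes it work is the specific labelling forcing the last letters of $P_0$ into $S_0$, together with Proposition \ref{31_May_2018_Propn}, which lets me anchor the $S$-class colouring at the origin of a chosen fixed point and then propagate it. One should also verify that the special point of $\phi$ genuinely carries a $0$ at the origin, so that $m(\phi)$ is computed as intended.
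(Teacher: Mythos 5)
Your argument is correct, and it is in fact more complete than the one in the paper. For the divisibility $m(\theta)\mid m(\phi)$ the paper argues in the same combinatorial spirit as you do, but slightly differently: it observes that $w=\Psi(u)$ for the one-sided fixed points $w$ of $\phi$ and $u$ of $\theta$ beginning with the letter $0$, and that $w_i=0$ forces $u_i=0$ (only words of $P_0$ are sent to the letter $0$, and $P_0$ contains a unique word ending in $0$), so $M_\phi\subseteq M_\theta$ and hence $\gcd M_\theta\mid\gcd M_\phi$. You instead propagate the $S_i$-colouring along a preimage $x'$ of the special point of $\phi$ via Proposition \ref{31_May_2018_Propn}, anchored at $x'_0\in S_0$, and conclude that every positive occurrence of $0$ in $x_\phi$ sits at a multiple of $m(\theta)$; this gives $m(\theta)\mid d_\phi$ directly and is equally valid. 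The genuine difference is in the reverse divisibility: the paper dismisses it with ``it is not too difficult, using a similar line of argument'', whereas you supply an actual proof by passing to maximal equicontinuous factors and using that any continuous homomorphic image of the pro-$r$ group $\Z(r)$ inside $\Z_{m(\phi)}$ is trivial because $m(\phi)$ is coprime to $r$. That argument is sound (a factor map between minimal rotations on compact monothetic groups is, after translating by the image of the identity, a continuous epimorphism), and it buys you the reverse inequality with no further combinatorics; its only cost is the need to verify that $\phi$ is admissible of constant length $r$ so that Martin's theorem applies to $X_\phi$, which you do. The small points you flag --- that the special point of $\phi$ genuinely carries a $0$ at the origin, and that the last letters of $P_0$ lie in $S_0$ --- are indeed guaranteed by the labelling convention fixed in the proof of Theorem \ref{Thm_2_June_2018_pg_2}.
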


\begin{proof}
Let $w$ be the right-hand infinite fixed point of $\phi$ starting from the letter $0$, 
and let $u$ be the right-infinite fixed point of $\theta$ starting with the letter $0$. 
Note that $\lim_{n\rightarrow\infty}\Psi(\theta^n(0))=\lim_{n\rightarrow\infty}\phi^n(0)$, since $\phi(b)_0=b$ for all letters $b\in\B$ and since by definition, $a0\in P_0$ for some $a\in \A$. 
Thus $w$ is the image under $\Psi$ of $u$. 
Since only 2-letter blocks in $P_0$ are mapped to $0\in\B$ by $\Psi$ and since $a0$ is the only word in $P_0$ ending in `$0$' (by Corollary \ref{Corollary_31_May_2018_pg_2}), we have that $w_i=0$ implies $u_i=0$. 
Thus, 
$$
M_\phi:=\{n\in\N: w_n=0\}\subset\{n\in\N:u_n=0\}=:M_\theta,
$$
and so $gcd M_\theta$ divides $gcd M_\phi$, and so $h(\theta)$ divides $h(\phi)$, as required. 
Hence, $h(\theta)\leq h(\phi)$. 

It is not too difficult, using a similar line of argument, to show that in fact $h(\theta)=h(\phi)$. 
\end{proof}

We now give an alternative proof of \cite[Part of Theorem 8.08]{Martin}, which depends on the understanding of the intermediate space $(\mathcal{X}_\phi,\Z)$ as a substitution system, rather than a general Hausdorff space with some $\Z$-action. 

\begin{thm} \label{alt_prf}
The intermediate space in Martin's Theorem \ref{Martins_theorem}, $\mathcal{X}_\phi$, is an almost automorphic extension of its maximal equicontinuous factor. 
\end{thm}

\begin{proof}
We will use Theorem \ref{aa_xtn_iff_partial_coinc}. 
Let $S_i':=\{b\in\B:z(b)\cong -i\mod h(\phi\}$ be an equivalence class of $\B$, and let $c,d\in S_i'$. 
By definition of $\phi$, $\phi(c)=c_0\ldots c_{r-1}$, $\phi(d)=d_0\ldots d_{r-1}$, where $c_0=c$, $d_0=d$, and there exist $a,b\in \A$ such that for all $h\in\{1,\ldots,r-1\}$, $c_h=[\theta(a)(h-1,h)]$ and $d_h=[\theta(b)(h-1,h)]$. 
By Lemma \ref{divides}, $h(\theta)$ divides $h(\phi)$, so $z(c)\cong -i\mod h(\phi)$ implies that $z(c)\cong -(i\mod h(\phi))\mod h(\theta)$. 
Writing $j:=i\mod h(\phi)$ for short, we have $z(c)\cong -j\mod h(\theta)$; similarly, $z(d)\cong -j\mod h(\theta)$. 
As we remarked before in the proof of Lemma \ref{divides}, we can choose $a,b\in\A$ so that $c_h=0$ implies that $a_h=0$, and similarly, $d_h=0$ implies that $b_h=0$. 
Combining this with the above congruences and the fact that all zeroes in $X_\theta$ are spaced at least $h(\theta)$ apart force us to conclude that both $z(a)\cong -j\mod h(\theta)$ and $z(b)\cong -j \mod h(\theta)$. 
Thus, $a,b\in S_j$, so $c_h=[\theta(a)(h-1,h)]=[\theta(b)(h-1,h)]=d_h$. 

Hence by Theorem \ref{aa_xtn_iff_partial_coinc}, $\mathcal{X}_\phi$ is an almost automorphic extension of its maximal equicontinuous factor. 
\end{proof}
\begin{nt}\label{Note_whenever}
Note that we have in fact shown something stronger - that \emph{whenever} $c,d\in S_i'\subset\B$, we have that $c_h=d_h$ for all $h=1,\ldots,r-1$. 
\end{nt}
We use this fact to prove the following theorem.
\begin{thm}\label{1-1-outside_orbits}
The map $\Psi:\mathcal{X}_\phi\rightarrow \Z_{h(\theta)}\times \Z(r)$ as previously defined is one to one outside of the orbits of the fixed points of $\phi$. 
\end{thm}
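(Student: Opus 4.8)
The plan is to use Martin's explicit description of the factor map: exactly as for $X_\theta$, the map from $X_\phi$ to its maximal equicontinuous factor is $\Psi(y)=(\al(y),f(y))$, where $f$ is the $r$-adic address of Lemma~\ref{flow_homo} and $\al(y)=-p(y)\bmod m(\theta)$ with $p(y)=\min\{i\ge1:y_i=0\}$ (here $m(\phi)=m(\theta)$ by Lemma~\ref{divides}). First I would observe that the orbits of the fixed points of $\phi$ are precisely the points $y$ with $f(y)$ an \emph{integer} of $\Z(r)$: a fixed point has every block based at the origin equal to a basic block, so $f(y)=0$, and shifting gives $f(\si^n y)=T^n(0)=n$. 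Thus it suffices to prove that if $f(y)=f(y')=:z$ is a non-integer and $\al(y)=\al(y')$, then $y=y'$.

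The engine of the proof is the strong coincidence recorded in Note~\ref{Note_whenever}: if two letters lie in the same class $S_i'\subset\B$, then their $\phi$-images agree in every coordinate except possibly the $0$th. I would upgrade this to all powers of $\phi$ by a base-$r$ recursion: for any letter $a$ and any position $p$, writing $p=d\,r^{k-1}+p'$ with $0\le d<r$ and $0\le p'<r^{k-1}$, one has $\phi^k(a)_p=\phi^{k-1}(\phi(a)_d)_{p'}$, and $\phi(a)_d$ is already determined by the class of $a$ as soon as $d\neq0$. Iterating down the digits of $p$ yields the key fact that \emph{$\phi^k(a)_p$ depends only on the $S'$-class of $a$ whenever $p\neq0$}, while $\phi^k(a)_0=a$ is the only coordinate that sees the individual letter (the recursion bottoms out at $\phi^0(a)_0=a$ when every digit of $p$ vanishes).

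To assemble the two points I would invoke Lemma~\ref{preimage_adding_element}: for each $k$ the block $y[-o_k,\,-o_k+r^k-1]$ is a basic block $\phi^k(a_k)$, where $o_k=\sum_{i=0}^{k-1}z_ir^i$, so $y_j=\phi^k(a_k)_{j+o_k}$ for every $k$ with $j+o_k\ge0$. The spine letter is $a_k=y_{-o_k}$, and Proposition~\ref{31_May_2018_Propn} (consecutive letters increment the class by $1$) together with $0\in S_0'$ (since $\phi$ is canonical, $z(0)=0$) shows that the class of $y_0$ equals $\al(y)$ and hence the class of $a_k$ equals $\al(y)-o_k\bmod m(\theta)$. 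So the class of each spine letter is pinned down by $(\al(y),z)$, and by the key fact above $y_j$ is determined by $(\al(y),z)$ for every $j,k$ with $j+o_k\neq0$. Since $z$ is a non-integer it has infinitely many nonzero digits, whence $o_k\to\infty$; for each fixed $j$ this forces $j+o_k\ge1$ for all large $k$, so $y_j$ is determined. The same determination holds for $y'$, giving $y=y'$.

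The main obstacle is the recursion lemma of the second paragraph together with the bookkeeping feeding it: one must track the class of $a_k$ as $k$ varies and verify that the $0$th coordinate is the unique escape from the coincidence, so that the single ``seam'' position is the only possible source of ambiguity. This also makes the exclusion in the statement sharp: when $z$ is an integer $n$, the sequence $o_k$ stabilises at $n$, the position $j=-n$ satisfies $j+o_k=0$ for all large $k$, and there $y_{-n}=a_k$ records the individual fixed letter rather than its class—so distinct fixed points (distinct letters of $\phi$ fixed at position $0$ in the relevant class) share the same image, and injectivity genuinely fails exactly on these orbits.
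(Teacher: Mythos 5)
Your proposal is correct and follows essentially the same route as the paper: both rest on Lemma \ref{preimage_adding_element} to realise a fiber point as a nested limit of basic $\phi^k$-blocks, on the coincidence that $\phi^k(a)_n$ for $n\neq 0$ depends only on the $S'$-class of $a$, and on the non-integrality of $z$ pushing the single ambiguous ``seam'' position off to infinity. The only cosmetic difference is that you track the class of the spine letter via $\al(y)$ and Proposition \ref{31_May_2018_Propn} to get injectivity of $(\al,f)$ directly, whereas the paper instead shows $|f^{-1}(z)|=m(\theta)$ exactly and combines this count with the lower bound coming from surjectivity.
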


\begin{proof}
Note that by Lemma \ref{flow_homo}, $f^{-1}(00\ldots)$ contains only and all of the fixed points of the substitution $\phi$, and $\Psi$ maps orbits to orbits. 
Moreover, since $\Psi$ is surjective, we have that for all $z\in\Z$, there exist $x_1,\ldots x_{h(\theta)}$ such that $\Psi(x_j)=(z,j)\in \Z_{h(\theta)}\times \Z(r)$. 
Thus, $|f^{-1}(z)|\geq h(\theta)$. 
Therefore, recalling that $(\al,f):\mathcal{X}_\theta\rightarrow\mathcal{Z}{(h(\theta),r)}$,  to show $\Psi$ is one to one, we need to show that $|f^{-1}(z)|=h(\theta)$ for all non-integer $z\in \Z(r)$. 

So, let $z=z_0z_1\ldots\in\Z(r)$ be a non-integer, and let $z_i$ be a nonzero term of $z$. 
Since $z$ is not an integer, it does not have a tail of zeroes, so there is $j>i$ such that $z_j\neq 0$. 
Then by Lemma \ref{preimage_adding_element}, $x[(z);j+1]$ is a basic $r^{j+1}$-block. 
Note that since $i<j$, the word $x[(z); i+1]$ is a basic sub-block of $x[(z); j+1]$. 
Moreover, since $z_j\neq 0$, $x[(z);i+1]$ is not a prefix of $x[(z);j+1]$; informally we can say it is a basic ``tail-end'' $r^{i+1}$-block of $x[(z);j+1]$. 

Note that by the way $\phi$ is defined, if $a,b\in S_l$, then for all $k\in\N^+$, for all $n\in\{1,\ldots, r^k -1\}$, we have $\phi^k(a)_n=\phi^k(b)_n$. 
In other words, we have only $h(\theta)$-many options for $x[(z);i+1]$. 
By the same type of argument, we also have only $h(\theta)$-many choices for $x[(z);j+1]$. 
Let us label them as $w^{(j)}_1,\ldots,w^{(j)}_{h(\theta)}$. 
Moreover, again by definition of $\phi$, if $a\in S_l,  \ b\in S_m$, and $l\neq m$, then $\phi^k(a)_n\neq \phi^k(b)_n$, for all $k\in\N^+$ and $n\in\{0,\ldots,r^k-1\}$. 
Hence, out of the $h(\theta)$-many choices for $x[(z);i+1]$, there is precisely one which is a subword of a given choice of the $h(\theta)$ possibilities for $x[(z); i+1]$. 
Hence, for each $w^{(j)}_n$, for each $i<j$, there is precisely one $w^{(i)}_m$ which is a subword of $w^{(j)}_n$ starting at the appropriate index. 
Without loss of generality, let us relabel the words for each $j$ so that $w^{(i)}_n$ is a subword at the appropriate place of $w^{(j)}_n$ for all $n\in\{1,\ldots,h(\theta)\}$. 
Therefore, for $z\in \Z(r)$, we have only $h(\theta)$-many $x_i\in X_\phi$ such that $f(x)=z$, namely $x_n=\lim_{i\rightarrow\infty} w^{(i)}_n$ for $n\in \{1,\ldots, h(\theta)\}$. 
Therefore, $|f^{-1}(z)|=h(\theta)$, as required. 
\end{proof}

Hence we have the following diagram
\begin{equation}\label{diagram1}
 \begin{tikzcd}
(X_\theta,s) \arrow{rrr}{\Psi}[swap]{\Psi(w)=y_1, \ \Psi(v)=y_2} \ar[rrrrrr, "\Phi\circ\Psi", bend left=20]
& & & (X_\phi,s) \arrow{rrr}{\Phi}[swap]{\Phi(y_1)=\Phi(y_2)=\bold{0}} & & & (\Z_{h(\theta)}\times\Z(r),+) 
\end{tikzcd}
\end{equation}
We note that since the extension $\Psi:\mathcal{X}_\theta\rightarrow\mathcal{X}_\phi$ is distal (and isometric), all points outside the orbits of the fixed points of $\theta$ are distal, as well. 

%
%
%
\section{Finding the Ellis semigroups}\label{Ellis_calculation_section}

We proceed by recalling Proposition \ref{Propn2.10} about the map between respective Ellis semigroups which is induced by homomorphisms of dynamical systems and considering Diagram \ref{diagram1}: 
\begin{equation*}
 \begin{tikzcd}
(X_\theta,s) \arrow{rrr}{\Psi}[swap]{\Psi(w)=y_1, \ \Psi(v)=y_2} \ar[rrrrrr, "\Phi\circ\Psi", bend left=20]
& & & (X_\phi,s) \arrow{rrr}{\Phi}[swap]{\Phi(y_1)=\Phi(y_2)=\bold{0}} & & & (\Z_{h(\theta)}\times\Z(r),+) 
\end{tikzcd}
\end{equation*}
We move ``backwards'' (i.e. from factors to extensions) through this diagram, going from the simpler semigroup $E(\Z_{h(\theta)}\times\Z(r))$ to the more complicated ones for the other two spaces. 
All spaces are the same as introduced in Section \ref{Factorizations_Chapter}; we sometimes write $\mathcal{X}_\theta$ for the cascade $(X_\theta,s)$ and similarly $\mathcal{X}_\phi$ for $(X_\phi,s)$. 

Since $\mathcal{Z}:=\mathcal{Z}(h(\theta),r)=(\Z_{h(\theta)}\times\Z(r),+)$ is equicontinuous, it is distal and so by Proposition \ref{Propn4.9}, its Ellis semigroup is a group (in fact, $E(\mathcal{Z})\cong\mathcal{Z}$). 
Thus, the only idempotent in $E(\mathcal{Z})$ is the identity map, $I_\mathcal{Z}$. 

\begin{defn}[q]\label{define_q}
For the substitution $\phi$ on the alphabet $\mathcal{B}$ defined as in the proof of Theorem \ref{Thm_2_June_2018_pg_2}, we define sets of letters $C_1,\ldots, C_{r-1}\subset\B$ by $C_i:\{\phi(b)_i:b\in\B\}$. 
Define $q:=|\{C_i:i=1,\ldots,r-1\}|$. 
\end{defn}
In other words, the set $C_i$ is the set of all letters in the $i$th `column' of the substitution $\phi$, where we only consider the `tail-ends' $\phi(b)[1;r-1]$, for a letter $b$. 
Then, $q$ is the number of distinct sets of letters in the same column. 
Note that $|C_i|=h(\theta)$ for any $i=1,\ldots,r-1$, since by definition of $\phi$, we have only $h(\theta)$-many possibilities for ``tail-ends'', i.e. blocks $\phi(b)[1;r-1]$.

\begin{ntn}
By $\lim_{n\rightarrow\infty}\phi^n(a\cdot b)$ we mean that we keep the `center dot' fixed, so 
 $\phi(a\cdot b)=\phi(a)\cdot \phi(b)$, etc. 
\end{ntn}

\begin{thm}\label{q_idempotents}
The Ellis semigroup of the space $\mathcal{X}_\phi$ has one minimal ideal with $q$ idempotents. 
\end{thm}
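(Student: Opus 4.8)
The plan is to exploit the almost-automorphic structure of $\mathcal{X}_\phi$ over the distal base $\mathcal{Z}=(\Z_{m(\theta)}\times\Z(r),+)$ in order to reduce the entire statement to an analysis of how idempotents act on a single non-trivial fibre. First I would record two structural facts. Since $\mathcal{Z}$ is distal and $\Phi\colon X_\phi\to\Z_{m(\theta)}\times\Z(r)$ is almost one-to-one (Theorem \ref{1-1-outside_orbits}), two points of $X_\phi$ are proximal if and only if they lie in a common fibre $\Phi^{-1}(z)$: a proximal pair projects to a proximal, hence equal, pair in $\mathcal{Z}$, while any two points of a single fibre are proximal because $\mathcal{Z}$ is minimal and has a residual set of singleton fibres. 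Consequently the distal points of $X_\phi$ are exactly those over the non-integer points of $\Z(r)$, and every non-trivial fibre is the image under some $\si^k$ of the finite fibre $\Phi^{-1}(\mathbf{0})$, which consists of the fixed points of $\phi$ with the correct height coordinate. Second, because $\si$ is continuous, $\si$ is central in $E(X_\phi)$: for $p=\lim\si^{n_\alpha}$ one has $\si p=\lim\si^{1+n_\alpha}=p\si$. This centrality will let me transport the action of any idempotent from $\Phi^{-1}(\mathbf{0})$ to every other fibre.

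Next I would pin down the action of a minimal idempotent $u$. By Lemma \ref{Lemma3-1} the element $\Phi^*(u)$ is an idempotent of the group $E(\mathcal{Z})$, hence the identity, so $u$ preserves every fibre; by Lemma \ref{Lemma4} it fixes every distal point, so $u$ is the identity off the fibres over the integers. On $\Phi^{-1}(\mathbf{0})$ the set $u[X_\phi]$ meets the fibre in exactly one point $s_u$, since $u[X_\phi]$ contains no proximal pair (Proposition \ref{no_prox_pairs}) while all points of a fibre are mutually proximal; as $u$ maps $\Phi^{-1}(\mathbf{0})$ into $u[X_\phi]\cap\Phi^{-1}(\mathbf{0})=\{s_u\}$, it is the constant map $y\mapsto s_u$ there. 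Using centrality of $\si$ we get $u\equiv\si^{k}s_u$ on $\Phi^{-1}(T^{k}\mathbf{0})$, so $s_u$ determines $u$ everywhere. Thus $u\mapsto s_u$ is an injection from minimal idempotents into $\Phi^{-1}(\mathbf{0})$.

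With this dictionary the uniqueness of the minimal ideal falls out quickly. Let $u,v$ be minimal idempotents with $u\sim v$, i.e. $uv=v$ and $vu=u$. Then $uv=v$ says that $u$ fixes $v[X_\phi]$, and evaluating on $\Phi^{-1}(\mathbf{0})$ gives $u(s_v)=s_v$; but $u$ is constant equal to $s_u$ there, so $s_u=s_v$ and therefore $u=v$. By Proposition \ref{Propn3.14} each minimal ideal contains exactly one idempotent equivalent to $u$, and we have just shown this idempotent is $u$ itself, so $u$ lies in every minimal ideal; as distinct minimal ideals are disjoint, there is only one.

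Finally I would count the admissible values $s_u$, which equals the number of idempotents in the unique minimal ideal. Every $s_u$ is a simultaneous limit $s_u=\lim_\alpha\si^{n_\alpha}y$ for all $y\in\Phi^{-1}(\mathbf{0})$, along some net with $n_\alpha\to\mathbf{0}$ in $\mathcal{Z}$, and conversely any such coherent limit yields an idempotent via the dictionary. Writing $n_\alpha=r^{k_\alpha}t_\alpha$ with $r\nmid t_\alpha$ and $k_\alpha\to\infty$, the identity $\phi^{k}(a)_j=\phi(a)_j$ for $0\le j\le r-1$ shows that the entries of the limit are governed by the columns of $\phi$: for $1\le j\le r-1$ one has $(s_u)_j\in C_j$, and self-similarity propagates this description to all scales. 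I would then show that two approaches produce the same idempotent exactly when they select equal column-sets, so that the idempotents are in bijection with the distinct sets among $C_1,\dots,C_{r-1}$, of which there are $q$ by Definition \ref{define_q}. The main obstacle is precisely this last step: making rigorous the correspondence between nets $n_\alpha\to\mathbf{0}$ and column selections, verifying that the limit is genuinely independent of the choice of $y\in\Phi^{-1}(\mathbf{0})$, and checking that equal column-sets force equal limits while distinct column-sets force distinct ones. This is where the combinatorics of $\phi$, rather than the soft Ellis-semigroup machinery, carries the argument.
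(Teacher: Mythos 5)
Your structural reduction is sound and, for the uniqueness of the minimal ideal, arguably cleaner than the paper's argument: where the paper appeals to transitivity of the proximal relation, you combine Proposition \ref{no_prox_pairs} (so that a minimal idempotent is constant on each finite fibre) with Proposition \ref{Propn3.14} and the disjointness of distinct minimal left ideals to conclude that the unique idempotent of $K$ equivalent to $u\in I$ is $u$ itself, forcing $I=K$. That part I would accept. But note one repairable imprecision: when $m(\theta)>1$ the fixed points of $\phi$ are spread over $m(\theta)$ fibres $\Phi^{-1}(\al,\mathbf{0})$, $\al\in\Z_{m(\theta)}$, and these lie in \emph{distinct} $T$-orbits of the base, so a single point $s_u\in\Phi^{-1}(\mathbf{0})$ together with $\si$-equivariance does not determine $u$ everywhere. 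You must carry a tuple $(s_u^{\al})_{\al}$, one value per $\al$-class; the equivalence argument then goes through coordinatewise, but as written the injectivity of $u\mapsto s_u$, on which your uniqueness argument rests, is not established.

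The serious gap is the count. What you have shown is at most that minimal idempotents inject into (tuples of) fixed points of $\phi$; you have neither exhibited $q$ distinct minimal idempotents nor bounded their number above by $q$. You flag the correspondence between nets $n_\alpha\to\mathbf{0}$ and column selections as ``the main obstacle'' and stop there --- but that correspondence \emph{is} the theorem. The paper resolves it constructively: for $i\in\{1,\ldots,r-1\}$ it takes the explicit sequences $s^{ir^n}$, uses the canonical form of $\phi$ to show each letter $a\in C_i$ has a unique predecessor in the limit, deduces that $f_i:=\lim_{n} s^{ir^n}$ exists on all of $X_\phi$, is idempotent, and depends only on the column set $C_i$ (so $f_i=f_j$ if and only if $C_i=C_j$, giving exactly $q$ distinct maps), and then uses the fact that the tail-ends $\phi(b)[1,r-1]$ take only $m(\theta)$ values to rule out any further idempotents. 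Without some version of this combinatorial work --- existence of the $q$ limits, their pairwise distinctness, and the exhaustion argument --- your proposal establishes the qualitative structure of $E(\mathcal{X}_\phi)$ but not the statement of Theorem \ref{q_idempotents}.
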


\begin{proof}

If $f$ is an idempotent in $E_\phi$, then $f$ (as in Lemma \ref{flow_homo}) should project to $I_\mathcal{Z}$, i.e. $\Phi\circ f=I_{\mathcal{Z}}\circ \Phi$. 
Recall Theorem \ref{1-1-outside_orbits}, that $\Phi$ is 1-1 on the set 
$X_\phi':=X_\phi\setminus\cup\{O(w):w\textrm{ is a fixed point of }\phi\}$. 
Then proximality is trivially seen to be a transitive relation, hence by Proposition \ref{prox_trans_one_min_ideal}, we have that $E(X_\phi)$ has one minimal ideal. 
Moreover, if $x\in X_\phi'$, 
then $\Phi(f(x))=I(\Phi(x))=\Phi(x)$, so we have $f(x)=x$, 
i.e. $x$ is a fixed point of $f$. 
Noting that all maps in $E_\phi$ commute with powers of the shift, we only need to determine the values of $f$ on the preimage of $\bold{0}\in\mathcal{Z}$, 
i.e. on the fixed points of $\phi$, namely $w_1,\ldots, w_d$. 

We make a couple of observations about the fixed points of $\phi$. 
All such fixed points are images under $\Psi$ of fixed points of $\theta$. 
Since $\Psi$ identifies the fixed points $\ldots a\cdot b\ldots$ with $\ldots c\cdot d\ldots$ if and only if $ab\sim_\theta cd$, 
then the number of fixed points of $\phi$ is equal to the number of distinct equivalence classes $P(i,j,k)$ of $\theta$, which is also equal to $|\B|$. 
Moreover, from the way in which $\phi$ was defined, we have only $h(\theta)$-many possibilities for ``tail-ends'' $\phi(b)[1,r-1]$ for $b\in\B$. 
Hence, if $w'$ and $w''$ are two-sided fixed points of $\phi$ such that $w_0'=w_0''$, then $w_{-n}'=w_{-n}''$ for all $n\in\N^+$. 

We now claim that if $w',w''$ are distinct and negatively asymptotic, then in fact they only differ in the $0$th letter and thus are also positively asymptotic. 
Indeed, if $w_{-n}'=w_{-n}''$ for all $n\in\N^+$, and if $u'\in \Psi^{-1}(w')$ and $u''\in\Psi^{-1}(w'')$, then $u'[-(n+1),-n]\sim_\theta u''[-(n+1),-n]$ for all $n\in\N^+$. 
Let $S_a\subset \A$ be the equivalence class of last letters of $P_{w_{-1}'}$. 
Then the set of all first letters of $P_{w_0'}$ is the same as the set of all first letters of $P_{w_0''}$, i.e. is the set $S_{a+1\mod h(\theta)}$. 
So, $u_0'\sim_\theta u_0''$, so for all $n\in\N^+$, $u'[n,n+1]\sim_\theta u''[n,n+1]$. 
Hence for all $n\in\N^+$, $w_n'=w_n''$. 
Hence if $w',w''$ are distinct and negatively asymptotic, then they are also positively asymptotic and differ only in the $0$th letter. 
By the same argument, if $w,w'$ are positively asymptotic, then they are also negatively asymptotic, and again might differ only in the $0$th letter. 

We will prove our theorem through the following steps:
\begin{enumerate}
\item We define a set of special sequences $s^{k_i(n)}$ of shifts, such that the limit of each such sequence is idempotent on the set of fixed points of $\phi$. 
\item We next show these limits not only exist on all of $X_\phi$, but are also idempotent. Thus, these maps belong to the Ellis semigroup $E(\mathcal{X}_\phi)$. 
\item Finally, we show that these are both minimal idempotents, and the \emph{only} possible minimal idempotents. 
\end{enumerate}

Let $i\in\{1,\ldots,r-1\}$ and consider the sequences $s^{k_i(n)}$, where $k_i(n)=ir^n$, for $n\in\N$. 
Then note $s^{k_i(n)}(x)[-1,0]=x[ir^n-1,ir^n]$ for all $n\in\N$, by the definition of the shift. 
Consider the set $C_{r-1}$ of final letters of images $\phi(a)$ for $a\in\B$. 
We make the following observation:
 (A) For each $a\in C_i$, there is a unique $b\in C_{r-1}$ such that for all $c\in\B$, $s^{ir^n}(c)[-1,0]=ba$ for all $n\geq1$. 
 In other words, each $a\in C_i$ has a unique predecessor in the limit. 
 
 For $a\in S_i$, let $pred_i(a)$ be any letter in $\B$ such that $\phi(pred_i(a))_{r-1}=b$. 
 Then for any $c\in\B$, $\lim_{n\rightarrow\infty} s^{ir^n}(c)=\lim_{n\rightarrow\infty}\phi^n(pred_i(a)\cdot a)=\ldots b\cdot a\ldots$, 
 where $a=\phi(c)_i$.

 Let $F$ be the set of fixed points of $\phi$, and define $f_i|_F:=\lim_{n\rightarrow\infty} s^{ir^n}|_F$. 
 Then $f_i|_F$ is indeed an idempotent on $F$. 
 Let $x=\ldots c\cdot d\ldots\in F$ be a fixed point of $\phi$, and let $a:=\phi(d)_i$. 
 Then $f_i(x)=\lim_{n\rightarrow\infty} s^{ir^n}(\ldots c\cdot d\ldots)=\lim_{n\rightarrow\infty}\phi^n(pred_i(a)\cdot a)=\ldots b\cdot a\ldots=\ldots\phi(pred_i(a))\cdot\phi(a)\ldots$, where $b$ is the unique predecessor of $a\in S_i$. 
 Also, 
\begin{align*}
 f_i(f_i(x))&=f_i(\ldots b\cdot a\ldots)& \\
 &=f_i(\ldots \phi(pred_i(a))\cdot\phi(a)\ldots)& \\
 &= \ldots\phi(pred_i(a))\cdot\phi(a)\ldots &\textrm{by definition of $pred_i(a)$}\\
&= \ldots b\cdot a\ldots=f_i(x). & \\
\end{align*}
  Hence $f_i|_F$ is an idempotent. 
  
  Moreover, $f_i$ identifies all points which are proximal to the right, as $f_i$ is a limit of positive powers of the shift $s$. 
  In other words, if $f_i(\ldots a_{-1}\cdot a\ldots)=\ldots b_{-1}\cdot b_0\ldots$, and $c_0\in\B$ has the same tail-end as $a_0$, then $f_i(\ldots c_{-1}\cdot c_0\ldots)=f(\ldots a_{-1}\cdot a_0\ldots)=\ldots b_{-1}\cdot b_0\ldots$, since $\phi^\infty(a_0)$ and $\phi^\infty(c_0)$ coincide on the right. 
  
Now, since $|C_i|=q$, we have only $q$-many distinct $f_i$. 
In other words, $C_i=C_j$ if and only if $f_i=f_j$. 
This is obvious from the definition of the $C_i$ and $f_i$. 

Now, we show the maps $f_i$ can be extended to all of $x\in X_\phi$. 
In other words, we show that $f_i:=\lim_{n\rightarrow\infty} s^{ir^n}$ converges for all $x\in X_\phi$ and is an idempotent, for all $i=1,\ldots,r-1$. 
Recall that the following diagram is commutative: 
$$ \begin{tikzcd}
X_\phi \arrow{r}{s^k}[swap]{} \arrow[swap]{d}{\Phi} & X_\phi \arrow{d}{\Phi} \\%
\Z_r\times \Z(h(\theta)) \arrow{r}{+(0,k)}[swap]{} & \Z_r\times \Z(h(\theta))
\end{tikzcd}
$$
Moreover, $\Phi$ is one to one outside the orbits of the fixed points of $\phi$. 
We have, for $w$ not an integer:
\begin{align*}
\psi(\lim_{n\rightarrow\infty}s^{r^n}(w))&=\lim_{n\rightarrow\infty}\Phi(s^{r^n}(w)) &\textrm{since $\Phi$ is continuous}\\
&=\lim_{n\rightarrow\infty}\left[\Phi(w)+(0,r^n)\right] &\textrm{note that $r^n\in\Z(h(\theta))$}\\
&=\Phi(w)+\lim_{n\rightarrow\infty}(0,r^n)&\\
&=\Phi(w)+(0,0)=\Phi(w).&
\end{align*}
Therefore, $\{s^{r^n}\}_{n\in\N}$ converges to an idempotent, as $\Phi$ is one to one outside the orbits of the fixed points. 
Therefore, $f_i\in E(X_\phi)$, for all $i=1,\ldots,r-1$. 

We now show that the $f_i$ are minimal idempotents in $E(X_\phi)$. 
First recall that we have enumerated all possible values an idempotent $f\in E(X_\phi)$ can take, since it has to commute with the shift and commute with the map $\Phi$, which is one to one on $X_\phi'$. 
Note that all our $f_i$ act as identity on the right of the other $f_i$, 
and since we know that $E(X_\phi)$ has only one minimal (by Proposition \ref{prox_trans_one_min_ideal}
) ideal with at least two idempotents in it (by Proposition \ref{Lemma3-3}), we conclude that in fact all $f_i$ are minimal idempotents in the same minimal ideal $I\subset E(X_\phi)$. 
\end{proof}
Also note this - that $f_i$ are limits of sequences - is consistent with Eli Glasner's result that in cases such as our $X_\phi$, the Ellis semigroup is Frechet. 

We now prove our main theorem. 

\begin{thm}\label{our_main_theorem}
If $\theta$ is a bijective substitution with $n$ fixed points, such that $\{P(i,j,k)\}$ (from Definition \ref{Pijk_defn}) partition the set of legal two-letter words $P_\theta$, its Ellis semigroup has $2$ minimal ideals with $q$ idempotents each, where $q$ is as in Definition \ref{define_q}. 
\end{thm}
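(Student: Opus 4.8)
The plan is to transport the structure of $E(X_\phi)$ found in Theorem~\ref{q_idempotents} up the isometric extension $\Psi\colon \mathcal{X}_\theta \to \mathcal{X}_\phi$ of Diagram~\eqref{diagram1}. By Proposition~\ref{Propn2.10} the surjection $\Psi$ induces a surjective continuous homomorphism $\Psi^*\colon E(X_\theta)\to E(X_\phi)$, and by Lemma~\ref{Lemma3-1} it carries idempotents to idempotents; since it also carries the kernel onto the kernel, every minimal idempotent of $E(X_\theta)$ projects to one of the $q$ minimal idempotents $f_1,\dots,f_q$ of $E(X_\phi)$. The decisive reduction is that $\Psi$ is distal (being isometric): as remarked after Diagram~\eqref{diagram1}, every point of $X_\theta$ off the orbits of the fixed points of $\theta$ is distal, so by Lemma~\ref{Lemma4} every idempotent of $E(X_\theta)$ fixes all such points. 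As idempotents commute with $s$, an idempotent is then completely determined by its values on the finitely many fixed points of $\theta$, and those values must lie in the $\Psi$-fibres over the values that the corresponding $f_i$ takes on the fixed points of $\phi$.

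Next I would exhibit the minimal idempotents explicitly in two families, one for each end of the bi-infinite word. For $i\in\{1,\dots,r-1\}$ set $g_i:=\lim_{n\to\infty}s^{ir^n}$ and $h_i:=\lim_{n\to\infty}s^{-ir^n}$ on $X_\theta$. That these limits exist and are idempotent is shown exactly as in Theorem~\ref{q_idempotents}: directly on the fixed points from their self-similarity, and on the remaining points by the verbatim argument, using that $\pm(0,r^n)\to 0$ in $\Z_{m(\theta)}\times\Z(r)$ and that the factor map is one-to-one off the fixed-point orbits (Theorem~\ref{1-1-outside_orbits}); the limit so obtained fixes every distal point in accordance with Lemma~\ref{Lemma4}. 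Because $\Psi^*$ is continuous and commutes with $s$, we get $\Psi^*(g_i)=\lim_n s^{ir^n}=f_i$ in $E(X_\phi)$, and likewise each $h_i$ projects onto one of the $f_j$; the column count of Definition~\ref{define_q} yields $g_i=g_j\iff C_i=C_j$, so each family consists of exactly $q$ distinct minimal idempotents and projects onto all of $\{f_1,\dots,f_q\}$.

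I would then organise these idempotents into minimal left ideals. As in the proof of Theorem~\ref{q_idempotents}, the $g_i$ act as the identity on one another's right, so $g_ig_j=g_i$ for all $i,j$, placing all $q$ of them in a single minimal left ideal $I^+$; symmetrically the $h_i$ lie in a single minimal left ideal $I^-$. The theorem now reduces to the single claim that \emph{each $f_i$ has exactly two minimal-idempotent preimages under $\Psi^*$, one in $I^+$ (namely $g_i$) and one in $I^-$}. Granting this, the minimal idempotents of $E(X_\theta)$ are precisely $g_1,\dots,g_q,h_1,\dots,h_q$; since every minimal left ideal contains a minimal idempotent and each such idempotent belongs to a unique minimal left ideal, every minimal left ideal must be $I^+$ or $I^-$, so there are exactly two, each carrying the $q$ idempotents that project bijectively onto $f_1,\dots,f_q$ (a count corroborated by the cross-ideal pairing of Proposition~\ref{Propn3.14}, and consistent with Lemma~\ref{Lemma3-3} since $X_\theta$ is minimal and not distal). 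The separation $I^+\ne I^-$ is exactly where $X_\theta$ is genuinely larger than $X_\phi$: there proximality is transitive and forces one minimal ideal (Proposition~\ref{prox_trans_one_min_ideal}), whereas in $X_\theta$ a positively asymptotic and a negatively asymptotic pair of fixed points lift to non-proximal pairs by distality of $\Psi$, breaking transitivity; Proposition~\ref{no_prox_pairs} then confirms that $g_i[X_\theta]$ and the image section of its $I^-$-partner are genuinely distinct distal sections.

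The main obstacle will be establishing the central claim that each $f_i$ has exactly two minimal-idempotent preimages. The lower bound amounts to proving $g_i\ne h_i$ and that the two lie in different minimal left ideals, which requires a careful comparison, on the finite set of fixed points of $\theta$, of the positive and negative shift-limits together with the failure of transitivity of proximality just described; it is worth stressing that this count is robustly two and does \emph{not} scale with the fibre size $|P(i,j,k)|$, so the correct invariant is the direction of the limiting shift rather than a choice of fibre point. The upper bound is harder: one must show that \emph{every} minimal-idempotent preimage of $f_i$, being a limit of shifts projecting to the identity on $\Z_{m(\theta)}\times\Z(r)$ and being pinned down by its action on the fixed points, realises one of only two possible asymptotic patterns and hence equals $g_i$ or the other lift. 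Making this rigorous means ruling out ``mixed'' limits along sequences of $r$-adically small but sign-varying integers, and I expect the cleanest route is to identify the two-element sign datum with the intrinsic pairing of the fixed points of $\theta$ within each $\Psi$-fibre, and then read off exactly two minimal left ideals.
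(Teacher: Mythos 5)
Your overall strategy coincides with the paper's: reduce everything to the finitely many fixed points of $\theta$ (distality of the $\Psi$-fibres plus Lemma \ref{Lemma4} and commutation with $s$), realise candidate idempotents as limits of shifts, and count the minimal-idempotent lifts of each of the $q$ idempotents of $E(X_\phi)$. You are in fact more explicit than the paper in writing the two families as $\lim_n s^{ir^n}$ and $\lim_n s^{-ir^n}$ and in assembling them into the two left ideals. However, you leave the decisive step --- that each $f_i$ has \emph{exactly} two minimal-idempotent preimages, i.e.\ the upper bound --- as an acknowledged obstacle, and the route you sketch for it (ruling out ``mixed'' limits along sign-varying $r$-adically small sequences) is aimed at the approximating sequences rather than at the limit function, which is where the constraint actually lives. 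That step is the content of the theorem, so as written the proposal has a genuine gap.

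The paper closes it with a short proximality argument that you have all the ingredients for but do not deploy. If $f$ is a minimal idempotent then $f(b)$ is proximal to $b$ (Proposition \ref{Propn4.4}, as in the proof of Lemma \ref{Lemma4}). Since $\theta$ is in canonical form, its fixed points are indexed by $P_\theta$, and two distinct fixed points are proximal only if they agree on all nonnegative coordinates or on all negative coordinates; hence inside the fibre $\Psi^{-1}\bigl(g(\Psi(b))\bigr)$ there are exactly two admissible values for $f(b)$: the point positively asymptotic to $b$ and the point negatively asymptotic to $b$. Coincidence-freeness then forces a single such left/right choice to propagate to every other point of the same fibre, so each $f_i$ has at most two lifts; your explicit $g_i$ and $h_i$ realise both, and Proposition \ref{Propn3.14} (equivalent idempotents distribute one per minimal ideal) pairs them across the two ideals. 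I would also flag that your identity $\Psi^*(g_i)=f_i$ and the claim $g_ig_j=g_i$ are asserted by analogy with Theorem \ref{q_idempotents} rather than checked at the $\theta$ level; the paper is equally terse there, but if you supply the proximality count above, those verifications are what remain to make the argument complete.
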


\begin{proof}

In Theorem \ref{q_idempotents}, we have shown that $E_\phi$ has one minimal ideal with $q$ idempotents. 

Moving to the extension $\mathcal{X}_\theta$ of $\mathcal{X}_\phi$, any minimal idempotent in $E_\theta$ is mapped to a minimal idempotent in $E_\phi$. 
We note again that idempotents commute with powers of the shift, so are fully determined by their value on a point per orbit. 
Since points in $X_\theta':=X_\theta\setminus\cup\{O(w):w\textrm{ is a fixed point of }\phi\}$ get mapped to points in $X_\phi'$, fibers of $\Psi$ are distal, points in $X_\theta'$ are distal, and idempotents map distal points to themselves, we have that an idempotent $f\in E_\phi$ will be the identity on $X_\theta'$. 
Thus, we only need to determine the value $f$ takes on the fixed points of $\theta$. 
Since it gets mapped to an idempotent in $E_\phi$, we have $\Psi\circ f=g\circ \Psi$, for one of the $q$-many idempotents $g$ in $E_\phi$. 

Let us consider what an idempotent $f\in E_\theta$ `does' to the fixed points of $\theta$. 
Recall from Proposition \ref{Propn4.4} that for any minimal idempotent $u$, the points $ux$ and $x$ are proximal. 
Note that since $\theta(a)_0=\theta(a)_{r-1}$, every legal word in $P_\theta$ is a fixed point of $\theta$, 
so $\theta$ has $|P_\theta|$ many fixed points. 
Also note that for two such fixed points $x$ and $y$, either $x_n=y_n$ for all $n\in \N$, or $x_n\neq y_n$ for all $n\in\N$; 
similarly $x_n$ and $y_n$ are either all the same or all different for all negative integers $n$. 
Thus, if $x$ and $y$ are proximal, they either coincide in all their non-negative or all their negative indexes.  

Fix a minimal idempotent $g$ in $E_\phi$, and let the minimal idempotent $f\in E_\theta$ be such that $\Psi\circ f = g\circ \Psi$. 
Let $a\in X_\theta$ be a fixed point of $\theta$. 
 Since $ux=uy$ implies that $x$ is proximal to $y$ (for a minimal idempotent $u$), each one of the $h(\theta)$-many points $b$ in the fiber of $\Psi^{-1}(a)$ can only get mapped to two potential points in the fiber of $\Psi^{-1}(g(a))$ - call them $b'$, which is proximal with $b$ on the right, and $b''$, which is proximal with $b$ on the left. 
 Note that the choice of $b'$ or $b''$ also uniquely determines the choice of $f(c)$ for any other point $c$ in the same fiber $\Psi^{-1}(a)$, since $\theta$ is bijective (and so would the tails of its fixed points be bijective). 
 Hence, for each idempotent $g\in E_\phi$, we have exactly two choices of $f\in E_\theta$ of idempotents such that $\Psi^*(f)=g$. 
 By almost the same argument as that in the proof of Theorem \ref{q_idempotents}, we can show that both $f$ are limits of shift maps, hence are indeed in the Ellis semigroup of $\mathcal{X}_\theta$. 
Recalling that equivalent idempotents get mapped to equivalent idempotents (so in this case, equivalent idempotents in $E_\theta$ get mapped to the same idempotent in $E_\phi$), we have only two equivalent idempotents in $E_\theta$. 
Hence, we have only two minimal ideals in $E_\theta$, with $q$ many idempotents each. 
\end{proof}

Thus, as Corollary, we have the following restatement of the Theorem of Haddad and Johnson:

\begin{thm}[{\cite[Theorem 3.5]{HaddadJohnson}}]\label{Our_main_binary_case}
The Ellis semigroup $E_\theta$ of a continuous constant-length binary substitution has two minimal ideals with two idempotents each. 
Furthermore, let $v, \bar{v}, w, \bar{w}$ be the four fixed points of the substitution $\theta$, where $v[-1,0]=11$ and $w[-1,0]=01$. 
Then in light of the argument in the proof of Theorem \ref{our_main_theorem}, we may express the four minimal idempotents $g_1, g_2, g_3,$ and $g_4$ in shorthand as: 

\begin{center}
\begin{tabular}{c || c | c | c | c }
      &  $v$ & $\bar{v}$ & $w$ & $\bar{w}$ \\
      \hline\hline
$g_1$ &  $w$ & $\bar{w}$ & $w$ & $\bar{w}$ \\
\hline
$g_2$ &  $\bar{w}$ & $w$ & $w$ & $\bar{w}$ \\
\hline
$g_3$ &  $v$ & $\bar{v}$ & $v$ & $\bar{v}$ \\
\hline
$g_4$ &  $v$ & $\bar{v}$ & $\bar{v}$ & $v$ \\
\hline
\end{tabular}
\end{center}
\end{thm}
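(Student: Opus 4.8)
The plan is to obtain this as the explicit specialization of Theorem \ref{our_main_theorem} to the continuous binary case, with the quantities $q$ and $m(\theta)$ there computed and the four idempotents pinned down by their tail behaviour. First I would check the hypotheses: a continuous substitution is coincidence-free, hence after passing to a power it is in the canonical form of Hypothesis \ref{Hyp_2}; it is admissible and simple by definition; and condition (A) of Theorem \ref{Martins_theorem} holds (by Coven and Keane the system is an AI extension of its maximal equicontinuous factor, and this will also follow directly from the next step). Since $\theta(a)_0=\theta(a)_{r-1}=a$, every legal two-letter word is the central block of a fixed point, so $\theta$ has exactly $|P_\theta|=4$ fixed points, named $v,\bar{v},w,\bar{w}$ with $v[-1,0]=11$, $w[-1,0]=01$ (hence $\bar{v}[-1,0]=00$, $\bar{w}[-1,0]=10$).

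The crucial auxiliary step is to show $m(\theta)=1$ and $q=2$. Because $\theta$ is coincidence-free, the classes $S_i$ are equicardinal, so $|\A|=m(\theta)\,|S_0|$; as $|\A|=2$ this forces $m(\theta)\in\{1,2\}$. If $m(\theta)=2$ then $|S_0|=|S_1|=1$, and Proposition \ref{31_May_2018_Propn} shows every legal $ab$ has $a,b$ in adjacent classes, so only $01$ and $10$ are legal and the fixed points would be the periodic word $\ldots 0101\ldots$, contradicting admissibility. Hence $m(\theta)=1$, $|S_0|=2$, and $\Psi$ is the derivative (difference) coding, which is $2$-to-$1$ and identifies each word with its dual; in particular the two classes $\{00,11\}$ and $\{01,10\}$ partition $P_\theta$, confirming (A). The induced $\phi$ is then binary with $\phi(0)$ and $\phi(1)$ agreeing off coordinate $0$ (Note \ref{Note_whenever}), so each column set $C_i=\{\phi(0)_i\}$ of Definition \ref{define_q} is a singleton; since $\theta$ is aperiodic both letters occur among $\phi(0)_1,\ldots,\phi(0)_{r-1}$, giving $q=|\{C_i\}|=2$. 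Theorem \ref{our_main_theorem} now yields that $E_\theta$ has two minimal ideals with $q=2$ idempotents each, i.e. four minimal idempotents.

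It remains to read off the table, for which I would use the lifting description from the proof of Theorem \ref{our_main_theorem} together with the tail structure of the four fixed points. The right half of a fixed point is determined by its coordinate $0$ and the left half by its coordinate $-1$, so the pairs asymptotic on the right are $\{v,w\}$ and $\{\bar{v},\bar{w}\}$, while those asymptotic on the left are $\{v,\bar{w}\}$ and $\{\bar{v},w\}$; no two of the four are proximal once both tails are accounted for, consistent with Proposition \ref{no_prox_pairs}. The two idempotents of $E_\phi$ (Theorem \ref{q_idempotents}) collapse the fibers $\{v,\bar{v}\}$ and $\{w,\bar{w}\}$ onto one of the two $\phi$-fixed points $\Psi(v)$ or $\Psi(w)$; each lifts in exactly two ways to $E_\theta$, one sending every fixed point to its right-proximal partner in the target fiber and one to its left-proximal partner. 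Computing these four lifts reads off the rows directly: the right-lift and left-lift with image in $\{w,\bar{w}\}$ give $g_1$ and $g_2$, and the right-lift and left-lift with image in $\{v,\bar{v}\}$ give $g_3$ and $g_4$, exactly as tabulated.

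Finally, to confirm the ideal structure I would compute on the four fixed points that $g_1 g_3=g_1$, $g_3 g_1=g_3$ and $g_2 g_4=g_2$, $g_4 g_2=g_4$, so that $\{g_1,g_3\}$ and $\{g_2,g_4\}$ are the two minimal left ideals (the right-proximal and left-proximal families), while $g_1 g_2=g_2$, $g_2 g_1=g_1$ and $g_3 g_4=g_4$, $g_4 g_3=g_3$ exhibit the cross-ideal equivalences $g_1\sim g_2$ and $g_3\sim g_4$ of Proposition \ref{Propn3.14}. I expect the main obstacle to be bookkeeping rather than conceptual: one must fix the exact right/left tail of each of $v,\bar{v},w,\bar{w}$ and match each of the two lifts of each $E_\phi$-idempotent to the correct row, since an off-by-one in the asymptotic relations would merely permute the table entries. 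The one genuinely substitution-theoretic input is the lemma that $m(\theta)=1$, ruling out the periodic height-$2$ case.
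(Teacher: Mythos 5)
Your proof is correct, and it follows the route the paper intends: the theorem is stated there as a bare corollary of Theorem \ref{our_main_theorem}, with no verification recorded, so everything of substance in your write-up is material the paper omits. Your checks all hold up: $m(\theta)=1$ follows from equicardinality of the $S_i$ plus the observation that height $2$ would force the alternating (periodic) word; $P_\theta=\{00,01,10,11\}$ is partitioned into the two dual classes $\{00,11\}$ and $\{01,10\}$, so condition (A) holds, $\Psi$ is the $2$-to-$1$ difference coding, and Note \ref{Note_whenever} makes each column set $C_i$ a singleton, with both singletons occurring by aperiodicity, so $q=2$; and the table is exactly what the lifting argument in the proof of Theorem \ref{our_main_theorem} produces once the one-sided asymptotic pairs $\{v,w\},\{\bar v,\bar w\}$ (right) and $\{v,\bar w\},\{\bar v,w\}$ (left) are written down. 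Two remarks. First, your closing computation is worth keeping: under the paper's own composition convention ($pq$ acting as $p(q(x))$, as used in the proof of Lemma \ref{Lemma4}), one indeed gets $g_1g_2=g_2$, $g_2g_1=g_1$ and $g_1g_3=g_1$, $g_3g_1=g_3$, so by Definition \ref{Defn3.13} it is $g_1\sim g_2$ and $g_3\sim g_4$, with $\{g_1,g_3\}$ and $\{g_2,g_4\}$ the idempotent pairs of the two minimal left ideals. This is the \emph{opposite} pairing from the one asserted without proof at the start of the counterexample in Section \ref{Counterexample_Section} ($g_1\sim g_3$, with $g_1,g_2$ in one ideal); your grouping is the one consistent with the paper's definitions, so you have in effect caught a discrepancy rather than made an error. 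Second, two small wording issues: the sentence ``no two of the four are proximal'' is literally false ($v$ and $w$ agree on $[0,\infty)$ and hence are proximal) --- what you need, and what you actually use, is only that $w,\bar w$ (respectively $v,\bar v$) are mutually distal, so that each idempotent's image may contain both; and ``simple by definition'' cannot be checked since the paper never defines ``simple,'' though the property it is used for ($|P(i,j,k)|=|S_i|$) does hold here because $\theta(0)$ and $\theta(1)=\bar{\theta(0)}$ disagree in every coordinate.
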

%
%
%
\section{The Counterexample} \label{Counterexample_Section}

Recall the definition of an IP cluster point: 

\begin{defn}[IP set, generating sequence, \cite{HindmanStrauss}]
An IP set $P$ in $\N$ (respectively, in $\Z$), is a subset of $\N$ (resp $\Z$) which coincides with the set of finite sums $p_{n_1}+\ldots+p_{n_k}$, for distinct indeces $n_1<n_2<\ldots<n_k$, taken from a sequence $(p_n)_{n=1}^\infty$ of distinct elements in $\N$ (resp in $\Z$). 
The sequence $(p_n)_{n=1}^\infty$ is called the \emph{generating sequence} of $P$. 
\end{defn}
Moreover, we will see that certain idempotents in the Ellis semigroup can in fact be thought of as cluster points `along an IP set'. 
In \cite[Definition on pp723, before Proposition 3.2]{Haddad}, Kamel Haddad introduces this notion as: 
\begin{restatable}{defn}{IPCPdef}\label{IPCPdef}
For an $\N$-cascade (or $\Z$-cascade), a cluster point $f$ of the Ellis semigroup $E(X)$ is called an \emph{IP cluster point along an IP subset $P$ of $\N$ (or $\Z$)} if and only if for every neighbourhood $U$ of $f$ in $X^X$, 
there is a IP subset $Q_U$ of $P$, such that $Q_U\subseteq\{n\in P: T^n\in U\}$.
\end{restatable} 

\begin{rmk}\label{IPCPrmk2}
Note that if $f$ is an IP cluster point (written IPCP for short) along the set $P$, and if $Q\supset P$, then $f$ is also an IPCP along $Q$. 
\end{rmk}

 Proposition 3.4 from Haddad and Johnson's paper states:

\begin{propn}[{\cite[Proposition 3.4]{HaddadJohnson}}]\label{HJerror}
Let $P$ be an IP subset of $\Z$, generated by $\{p_n\}_{n=1}^\infty$. 
If $p_n$ is positive for an infinite number of $n$, we denote by $P^+$ the IP set generated by the positive $p_n$'s. 
If $p_n$ is negative for an infinite number of $n$, we denote by $P^-$ the IP set generated by the negative $p_n$'s. 
Then $f$ is an IPCP for a $\Z$-cascade along an IP set $P$ if and only if $f$ is an IPCP for at least one of the corresponding $\Z^+$ or $\Z^-$ actions, along $P^+$ or $P^-$ respectively. 
\end{propn}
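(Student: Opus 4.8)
The plan is to treat the two directions of the equivalence separately, since they are of very different character. The backward (``if'') direction is essentially free: by construction $P^+$ and $P^-$ are subsets of $P$ (every finite sum of positive generators, respectively of negative generators, is in particular a finite sum of generators), so any witnessing IP set for one-sided clustering is already an IP subset of $P$. The only subtlety is to check that being an IPCP for the $\Z^+$- (or $\Z^-$-) action along a set of positive (negative) indices really does yield IPCP-ness for the full $\Z$-cascade along those same indices; but since both actions are generated by the single homeomorphism $T$ and the witnessing index set $Q_U$ consists of genuine integers on which $T^n$ is already defined in the $\Z$-cascade, the witnessing IP sets carry over verbatim (this is the content of Remark \ref{IPCPrmk2}, applied with the smaller set $P^+\subseteq P$). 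So the entire content of the proposition lies in the forward (``only if'') direction.

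For the forward direction, the natural strategy is to fix an $f$ that is an IPCP along $P$ and try to extract a one-sided IP subset witnessing the clustering. First I would pass to the ultrafilter picture: the IPCPs along $P$ are exactly the limits $f=\lim_{n\to p}T^n$ for idempotent ultrafilters $p\in\beta\Z$ with $P\in p$, via the Galvin--Glazer/Hindman correspondence underlying \cite{HindmanStrauss}. Since such a $p$ is nonprincipal, exactly one of $\Z^+,\Z^-$ lies in $p$; say $\Z^+\in p$, so that $P\cap\Z^+\in p$. The hope would then be to show $P^+\in p$, from which $f$ would be realised as an IPCP for the $\Z^+$-action along $P^+$. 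Equivalently, at the elementary level, given for each neighbourhood $U$ of $f$ an IP set $Q_U\subseteq\{n\in P:T^n\in U\}$, one would run a pigeonhole/Ramsey argument on the generating sequence $\{p_n\}$: split each generator by sign and attempt to thin $Q_U$ to a sub-IP-set supported only on positive generators (giving $P^+$) or only on negative generators (giving $P^-$), while preserving membership in $\{n:T^n\in U\}$.

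The hard part --- and the place where I expect the argument to break down, consistent with the paper announcing a counterexample --- is precisely this passage from ``the positive part of $P$'' to ``the IP set generated by the positive generators.'' The set $P\cap\Z^+$ that genuinely sits in $p$ consists of all finite sums $p_{n_1}+\cdots+p_{n_k}$ that happen to be positive, and when the signs of the $p_n$ are interleaved these sums freely mix positive and negative generators; the one-sided set $P^+=\mathrm{FS}(\{p_n:p_n>0\})$ is in general a much thinner set, and there is no mechanism forcing $P^+\in p$, nor forcing a purely-positive sub-IP-set of $Q_U$ to retain the clustering of $f$. In other words, the clustering witnessed along sign-mixed sums need not descend to either one-sided IP set. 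I would therefore expect any attempted proof to stall exactly here, and this gap --- the failure of the forward implication for generating sequences whose signs interleave --- should be the mechanism behind the class of counterexamples developed in Section \ref{Counterexample_Section}.
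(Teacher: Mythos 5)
You have correctly sensed the essential point: the paper does not prove Proposition \ref{HJerror} at all. It is quoted verbatim from Haddad and Johnson (who themselves state it without proof) precisely so that Section \ref{Counterexample_Section} can refute it, and the forward (``only if'') implication is exactly the direction that fails. Your backward-direction argument is sound and matches what the paper's definitions give for free: $P^+$ and $P^-$ are IP subsets of $P$, so the witnessing sets $Q_U$ transfer unchanged from the one-sided action to the $\Z$-cascade (this is the correct use of Remark \ref{IPCPrmk2}). But the paper never needs that direction; all of its effort goes into killing the other one.

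Your diagnosis of where a proof attempt would stall is compatible with, though more abstract than, what the paper actually does, and it is worth seeing how the failure is made concrete. The generating sequence is chosen to be all negative integers together with the sparse positive sequence $\{p r^{2m}: m\in\N\}$, with $p=jr+j$ and $j=\min I$ as in Proposition \ref{myLem1}; the IP set this generates is all of $\Z$, so the hypothesis ``$f$ is an IPCP along $P$'' holds automatically for the idempotent $g_1$ of Theorem \ref{Our_main_binary_case} (every idempotent is an IPCP, and Remark \ref{IPCPrmk2} passes to the superset $\Z$). The conclusion then fails on both sides: Remark \ref{IPCPrmk1} rules out $P^-$ because $w$ and $v$ differ in every negative coordinate, so no negative shift can bring $s^{-n}(w)$ and $s^{-n}(v)$ simultaneously close to $w=g_1(w)=g_1(v)$; and Lemma \ref{IPCPLem1} rules out $P^+$ via the digit formula of Proposition \ref{myLem1}, which forces every finite sum $\rho$ of the chosen positive generators to satisfy $w_{\rho-1}w_{\rho}=10$, keeping $s^{\rho}(w)$ at distance $1$ from $w$. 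Note that the mechanism is not quite the one you predict (clustering carried by sign-mixed sums): rather, being an IPCP along $P$ only requires \emph{some} IP subset of $P$ to witness the clustering, and when $P=\Z$ any IP set qualifies, whereas $P^+$ and $P^-$ are rigidly determined by the chosen generators and can be engineered to avoid the return times of $g_1$ entirely. So your verdict --- that there is no proof to be had for the forward implication --- is the right one; to actually exhibit the failure you should, as the paper does, build the generating sequence around a specific idempotent and a specific substitution rather than reasoning about idempotent ultrafilters in general.
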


Recall the definition of a continuous substitution, that is, an aperiodic constant length substitution $\theta$ over the binary alphabet $\{0,1\}$ such that $\theta(0)=\bar{\theta(1)}$.

From now on, let $\theta$ be a continuous binary substitution of length $r$. 
We provide an alternative way of defining continuous substitutions via deterministic finite automata with output and a well-known theorem of Cobham. For the theory of automata, we follow \cite{AlloucheShallitAutomata}. 

A \emph{deterministic finite automaton with output} is defined to be a 6-tuple
$$
M=(Q,\Sigma,\de,q_0,\Delta,\tau),
$$
where
\begin{itemize}
\item $Q$ is a finite set of states,
\item $\Sigma$ is the finite input alphabet
\item $\de:Q\times\Sigma\rightarrow Q$ is the transition function, 
\item $q_0\in Q$ is the initial state, 
\item $\Delta$ is the output alphabet, and 
\item $\tau:Q\rightarrow \Delta$ is the output function.
\end{itemize}

If the input alphabet $\Sigma=\Sigma_k:=\{0,1,\ldots,k-1\}$ for an integer $k\geq 0$, we call a DFAO a \emph{$k$-DFAO}. 
We say that a sequence $(a_n)_{n\geq0}$ over a finite alphabet $\Delta$ is \emph{$k$-automatic} if there exists a $k$-DFAO $M=(Q,\Sigma_k,\de,q_0,\Delta,\tau)$ such that $a_n=\tau(\de(q_0,w))$ for all $n\geq 0$ and all $w$ with $[w]_k=n$.

\begin{thm}[{Cobham, \cite{Cobham}}]
Let $k\geq 2$. 
Let $\theta$ be a constant-length substitution of length $r$. 
Then a sequence $\bold{w}=(w_n)_{n\geq0}$ is $k$-automatic if and only if 
it is the image, under a sliding block code which length 1, of a $\theta$-fixed point. 
\end{thm}

We note the SBC might not be necessary, depending on the alphabet $\A$ over which $\theta$ is defined. 
The main need for the SBC is to ensure a substitution over $\{0,1,\ldots,k-1\}$ for some $k$. 

Let $\theta$ be a continuous binary substitution of length $r$. 
Since $\theta$ is continuous, we may write $\theta(0)=a, \ \theta(1)=\bar{a}$, where $a=a_0 a_1 \ldots a_{r-1}$, and $a_0=0$.

We recall the canonical base-$k$ representation of a non-negative integer $N$. 
One can find the more detailed theory, for example, in \cite{AlloucheShallitAutomata}. 
For each non-negative integer $N\in\N^+$, there is a unique representation of the form $N=\sum_{0\leq i\leq t} z_i k^i$, with $z_t\neq 0$ and $0\leq z_i<k$. 
Thus, we can define $(N)_k=z_t z_{t-1} \ldots z_1 z_0$, where $(N)_k$ is a finite word over the alphabet $\{0,\ldots, k-1\}$. 
We will call this $(N)_k$ the \emph{canonical base-$k$ representation of $N$}. 

\begin{thm}\label{myLem1}
Let $\theta$ be a continuous binary substitution of length  $r$, so $\theta(0)=a$, $\theta(1)=\bar{a}$, where $a=a_0a_1\ldots a_{r-1}$, and $a_0=0$. 
Then the one-sided fixed point of $\theta$ defined as $\w:=\lim_{n\rightarrow\infty}\theta^n(0)$ can be represented as 
\begin{equation}\label{asdf}
\w_N=(\sum_{i\in I} |(N)_r|_i)\mod2,
\end{equation}
 where $I:=\{m\in \{0,\ldots,r-1\}:a_m=1\}$, i.e. $I$ is the set of indexes of occurrences of the letter $1$ in $a=\theta(0)$, and $|(N)_r|_i$ is the number of occurrences of the letter $i$ in the canonical base-$r$ expansion of $N$.
\end{thm}

We first give a couple of examples to clarify the definition of $(N)_r$ and the expression \eqref{asdf}. 
Then we will proceed with the proof. 

\begin{xmpl}
Let $\theta$ be the Thue-Morse substitution $\theta(0)=0110$ and $\theta(1)=1001$. 
Then if $N\in \N^+$ has base $4$ expansion   $N=b_0 4^l+b_1 4^{l-1}+\ldots b_{l-1}4+b_l$, 
we have that $(N)_4=b_l b_{l-1} \ldots b_1 b_0$. 
For example, $(7)_4=13$, and $(20)_4=110$. 
Then $\w:=\lim_{n\rightarrow\infty}\theta^n(0)=0110100110010110\ldots$, 
the set of occurrences of the letter $1$ is $I:=\{m\in \{0,1,2,3\}:a_m=1\}=\{1,2\}$, 
and so $\w_k=\big(\sum_{i\in \{1,2\}} |(k)_4|_i\big)\mod2$. 

We recall the automaton associated with the Thue-Morse substitution, starting from the letter $0$:
\begin{center}
\includegraphics[scale=0.4]{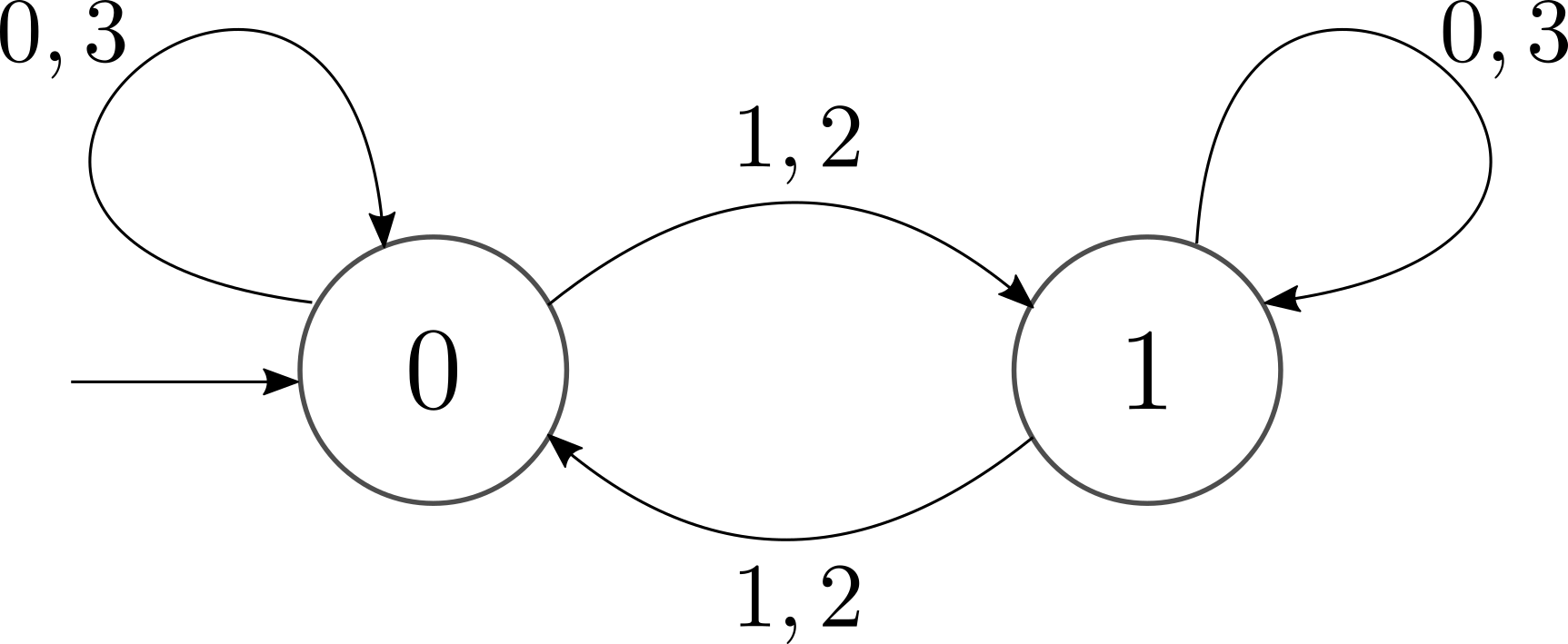}
\end{center}

\end{xmpl}
We note that the rule given in Theorem \ref{myLem1} is already known in the case of the Thue-Morse substitution \cite{Wolfram}.  
However, we do not know if it is known more generally for continuous substitutions. 

\begin{xmpl}
Let $\theta$ be the substitution $\theta(0)=011=a_0a_1a_2$ and $\theta(1)=100$. 
Then if  $N\in\N^+$ has base $3$ expansion $N=b_0 3^l+b_1 3^{l-1}+\ldots b_{l-1}3+b_l$, 
we have that $(N)_3=b_{l-1}\ldots b_1 b_0$. 
For example, $(7)_3=21$ and $(20)_3=202$. 
Then $\w:=\lim_{n\rightarrow\infty}\theta^n(0)=011100100\ldots$, the set $I:=\{m\in\{0,1,2\}:a_m=1\}=\{1,2\}$, and so $\w_k=(\sum_{i\in \{1,2\}} |(k)_3|_i)\mod2$. 

The automaton associated with this substitution, starting from the letter $0$, is:
\begin{center}
\includegraphics[scale=0.4]{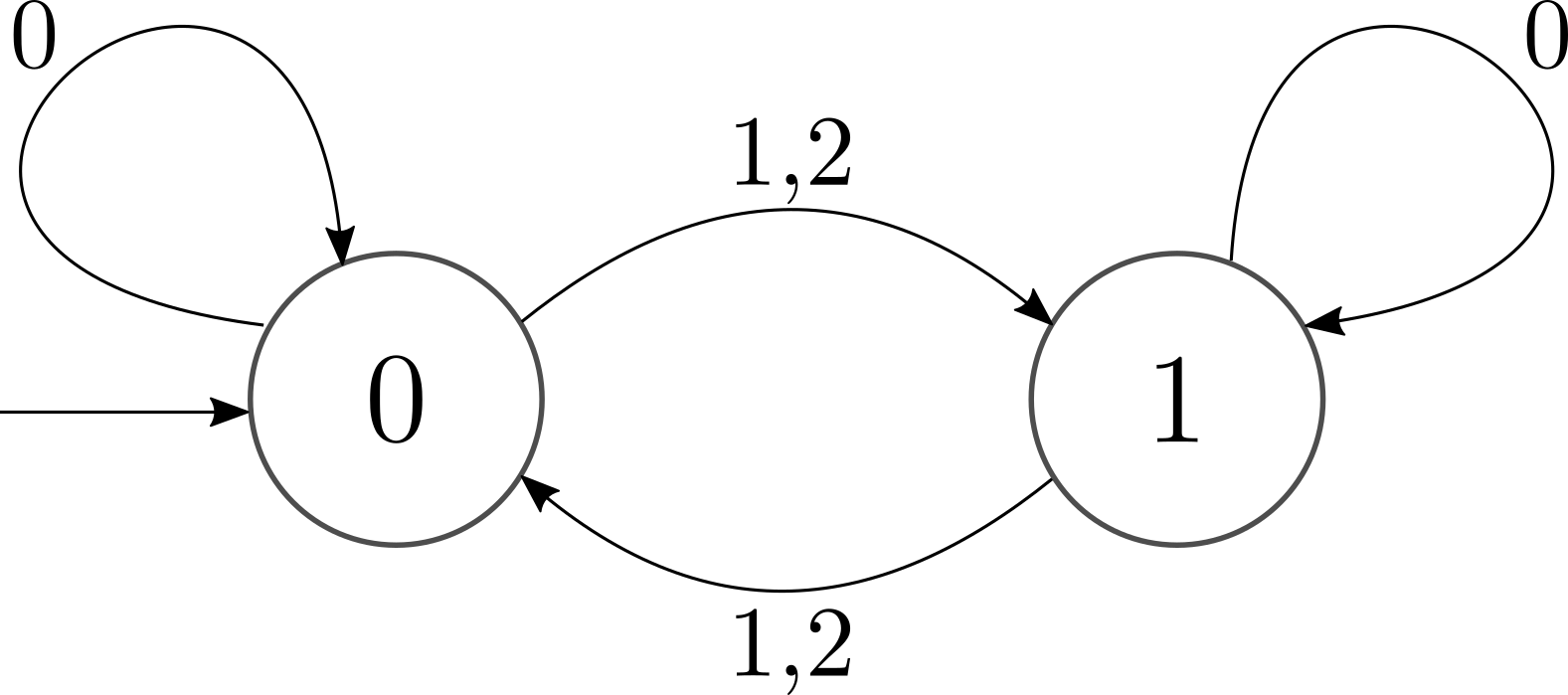}
\end{center}

\end{xmpl}

We now proceed with the proof of Proposition \ref{myLem1}. 
\begin{proof}
Let $\theta$ be a binary substitution of constant length $r$, which is moreover continuous (i.e. bijective). 
Let $I$ be the set of indexes of occurrences of the letter $1$ in $w=\theta(0)$, and $J$ be the respective set of indexes of the letter $0$ in $w$:
\begin{align*}
I:&=\{m\in \{0,\ldots,r-1\}:w_m=1\}\\
J:&=\{m\in \{0,\ldots,r-1\}:w_m=0\}
\end{align*}
Note that $I\cup J=\{0,\ldots,r-1\}$, as this is a bijective substitution. 
Then, following the proof of Cobham's Theorem from \cite{AlloucheShallitAutomata}[Theorem 6.3.2] we can represent the substitution via the following automaton
\begin{center}
\includegraphics[scale=0.4]{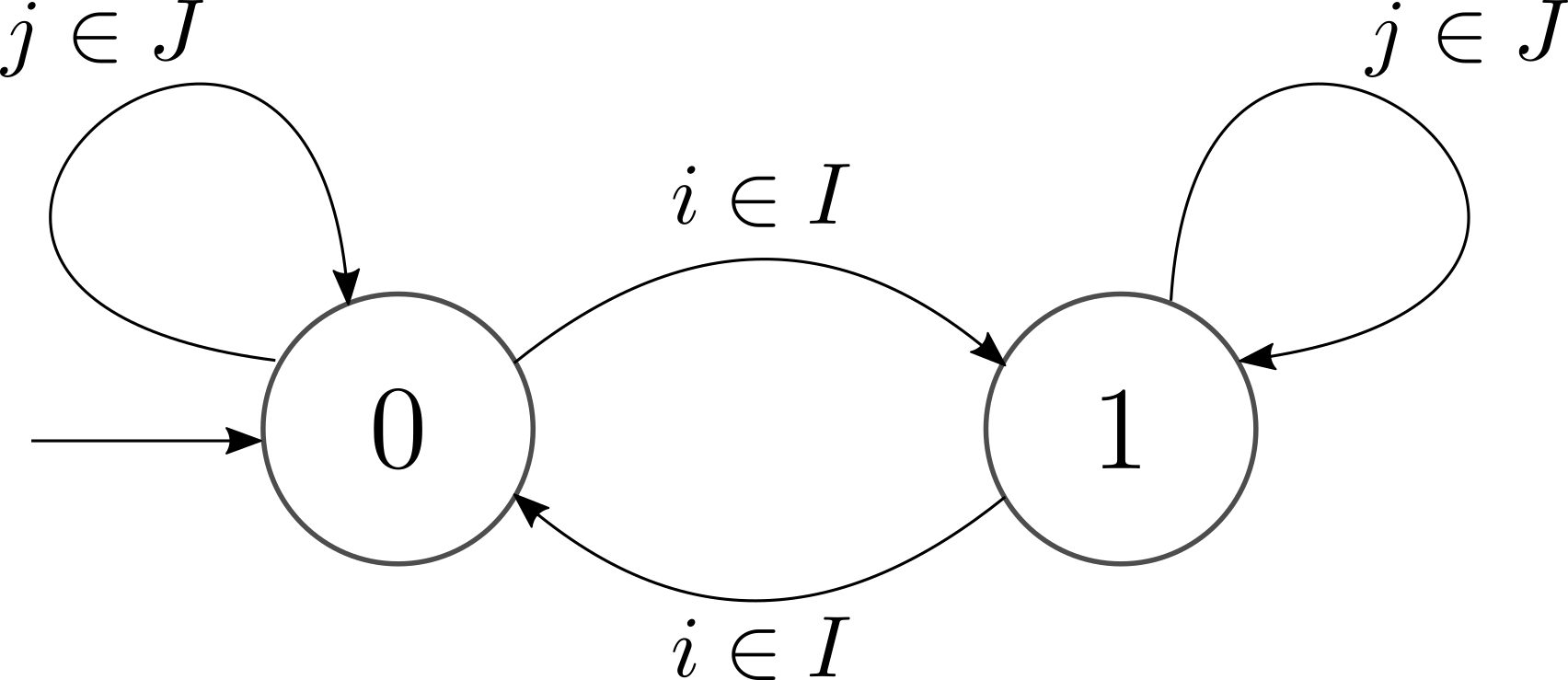}
\end{center}

From the diagram, it is obvious that if $\sum_{i\in I} |(k)_r|_i$ is even, then following the automaton, the $w_k$th letter will be $0$, and respectively, if the sum is odd, then the $w_k$th letter will be odd. 
\end{proof}

\begin{rmk}\label{IPCPrmk1}
Recall that $w_{-n}=\bar{v}_{-n}$ for any $n\in\N$, so $s^{-n}(w)$ and $s^{-n}(v)$ are always distance $1$ apart. Thus, $g_1$ cannot be an IPCP along any $P^-\subset \Z^-$. 
\end{rmk}

Now the following Lemma is all we need to finish our construction of the counterexample: 
\begin{lem}\label{IPCPLem1}
Further to the conditions of Theorem \ref{myLem1}, let $j:=\min I$ and $p:=jn+j$, so $(p)_r=jj$. 
Then the idempotent $g_1$ defined in Theorem \ref{Our_main_binary_case} is not an IPCP along the IP set generated by $Q^+:=\{p r^{2m}:m\in\N\}$. 
\end{lem}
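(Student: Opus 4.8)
The plan is to argue directly from Definition~\ref{IPCPdef}: to show that $g_1$ is not an IPCP along the IP set $P$ generated by $Q^+$, it suffices to exhibit a \emph{single} basic neighbourhood $U$ of $g_1$ in $X_\theta^{X_\theta}$ for which the return set $\{n\in P: s^n\in U\}$ contains no IP subset of $P$. Since an idempotent of $E_\theta$ is the identity on the distal points $X_\theta'$ and is completely determined by its values on the finitely many fixed points of $\theta$, I would take $U$ to be the neighbourhood $U_k$ that forces $s^n(v)_l=w_l$ for all $|l|\le k$, where $g_1(v)=w$ by Theorem~\ref{Our_main_binary_case} and $k$ is to be chosen. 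The statement then becomes a combinatorial one: decide for which $n\in P$ the shifted word $s^n(v)$ agrees with $w$ on the window $[-k,k]$, and show that those $n$ are too thin, in the IP sense, to contain an IP set.

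The engine of the argument is Proposition~\ref{myLem1}. For $n=\sum_{m\in F} p\,r^{2m}\in P$ the two equal digits $j$ of $\mathcal{P}(p)=jj$ occupy the pairwise disjoint pairs $\{2m,2m+1\}$, and since $j<r$ no carries occur; hence $\mathcal{P}(n)$ carries the digit $j$ exactly on $\bigcup_{m\in F}\{2m,2m+1\}$ and $0$ elsewhere. As $0\notin I$ (because $a_0=0$) while $j=\min I\in I$, Proposition~\ref{myLem1} yields the uniform identity $\omega_n=\big(\sum_{i\in I}|\mathcal{P}(n)|_i\big)\bmod 2 = 2|F|\bmod 2 = 0$ for every $n\in P$. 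To read off the window condition I would next compute $\omega_{n-c}$ and $\omega_{n+c}$ for small $c$ by tracking the single borrow (or carry) into the lowest occupied block, at position $2\min F$: the borrow turns its lower digit from $j$ into $j-1\notin I$ while the partner digit stays equal to $j\in I$, so exactly one $j$ survives and the parity is pinned \emph{independently of $|F|$}, the canonical-form fact $r-1\notin I$ killing the contribution of the cascaded $(r-1)$'s. Finally, using $v_l=\bar{\omega}_l$ for $l\ge 0$ together with the left-tail identity $w_l=\omega_{\,l+r^N}$ for $-r^N\le l<0$ (which follows from $w=\lim_N\theta^N(0\cdot 1)$), each coordinate equation $s^n(v)_l=w_l$ becomes an explicit parity equation in $\omega$ that can be evaluated from the block expansion of $n$.

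The decisive and hardest step is to pick one coordinate $l_0$, and set $k=|l_0|$, at which the parity demanded by $s^n(v)_{l_0}=w_{l_0}$ is incompatible with the value forced by the block expansion for \emph{every} $n$ lying in \emph{any} IP subset of $P$. I expect this to be the main obstacle, for a precise reason: an IP subset of $P$ may be generated by arbitrarily large and sparse blocks $p\,r^{2m}$, so the obstruction cannot rely on the summand $p$ itself (the $m=0$ block) and must persist throughout the entire finite-sum closure, in which the lowest occupied block — and therefore the borrow pattern governing the low coordinates — varies from element to element. The plan is to combine the uniform identity $\omega_n=0$ with the parity of $\omega_{n+l_0}$ extracted from the lowest block so as to expose a coordinate at which no finite-sum-closed family can meet the required parity, and to dispatch the degenerate cases $j=1$, $j+1=r$ and $r-1\in I$ separately. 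This is the positive-shift counterpart of Remark~\ref{IPCPrmk1}, which already excludes $g_1$ as an IPCP along any negative IP set; pinning the parity mismatch at $l_0$ uniformly over all IP subsets is exactly the crux on which the statement rests.
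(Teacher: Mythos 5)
Your setup is exactly the paper's: invoke Proposition \ref{myLem1}, exploit the disjoint base-$r$ supports of the summands $p\,r^{2m}$, and track the single borrow when passing from $n$ to $n-1$. But you then declare the decisive step --- choosing the window coordinate $l_0$ --- to be an unresolved obstacle, and the proof as written stops there. In fact your own second paragraph already closes it: take $l_0=-1$, i.e.\ the window $[-1,0]$. For \emph{every} $n=\sum_{m\in F}p\,r^{2m}$ in the IP set (not merely for a cofinal family), $\mathcal{P}(n)$ consists of $2|F|$ digits equal to $j$ and zeros elsewhere, so $\w_n=0$; and $\mathcal{P}(n-1)$ has $2|F|-1$ digits equal to $j$ (odd), one digit $j-1\notin I$ (as $j=\min I$), and $2\min F$ digits equal to $r-1$ (even), so $\w_{n-1}=1$ regardless of $F$. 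Since $w[-1,0]=\w_{n-1}... $ no --- since $s^n(w)[-1,0]=\w_{n-1}\w_n=10$ while $w[-1,0]=00$, every $s^n$ with $n$ in the IP set lies outside the neighbourhood of $g_1$ determined by the window $[-1,0]$ at the single point $w$; the return set is empty, so a fortiori it contains no IP subset. This is precisely the paper's argument.

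Your stated worry --- that the lowest occupied block varies from element to element, so the borrow pattern is not controlled --- is unfounded: the borrow always cascades through exactly $2\min F$ zero digits (an even number, so the $(r-1)$'s contribute $0$ modulo $2$ whether or not $r-1\in I$) and then flips exactly one $j$ to $j-1$ in the lowest block, leaving an odd total count of $j$'s. This is uniform over the whole finite-sum closure. Likewise the ``degenerate cases'' you propose to treat separately do not arise: $j-1\notin I$ is automatic from $j=\min I$ together with $a_0=0$; $j=r-1$ and $r-1\in I$ are excluded by the canonical form ($a_{r-1}=a_0=0$), and even without that the even multiplicity of $r-1$ makes its membership in $I$ irrelevant. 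One further simplification you missed: by Definition \ref{IPCPdef} it suffices to exhibit one neighbourhood whose return set contains no IP subset, and since $v$ and $w$ agree on all nonnegative coordinates, tracking $s^n(w)$ alone (rather than imposing $s^n(v)_l=w_l$ on a symmetric window) is enough; no left-tail identity for $w$ is needed.
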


\begin{proof}
Recall that $g_1(w)=w=g_1(v)$, where $v=\ldots 1\cdot0\ldots$ and $w=\ldots 0\cdot 0\ldots$. 
So, if $g_1$ is an IPCP along the IP set $P^+$, we will need for $s^q(w)$ to get arbitrarily close to $w$ for $q\in P^+$. 
Note that since $v_n=w_n$ for $n\in \N$, this also means $s^q(v)$ will get arbitrarily close to $w$. 

Note that for all $\rho\in Q^+$, $(\sum_{i\in I} |(\rho)_r|_i)\mod 2=0$, so $w_\rho=0$ for all $\rho\in Q^+$. 
Also, since all $\rho\in Q^+$ have disjoint support, we have that for $\rho_1,\ldots,\rho_m\in Q^+$, 
$$
\left(\sum_{i\in I} |(\sum_{k=1}^m \rho_k)_r|_i\right)\mod2
=\left(\sum_{k=1}^m\sum_{i\in I} |(\rho_k)_r|_i\right)\mod2=0.
$$
Also, $\rho$ will have a `tail' of $2m$ zeroes, so $(\rho-1)_r$ will have an odd number of $j$'s, an even number of $(n-1)$'s (in the tail), and one $j-1$ (which, since $j=\min I$, is not an element of $I$ hence not counted). 
Thus, $\left(\sum_{i\in I} |((\sum_{k=1}^m \rho_k)-1)_r|_i\right)\mod 2=1$. 
So, if $\rho$ is in $P^+$, $s^\rho(w)=\ldots w_{\rho-1}\cdot w_\rho\ldots=\ldots 1\cdot 0\ldots$, which is distance $1$ from $w$. 
So, $g_1$ cannot be an IPCP along $P^+$, as required.
\end{proof}

\begin{nt}
In fact, it is not hard to amend the proof above to show that the idempotent $g_3$ (as in Theorem \ref{Our_main_binary_case}) is an IPCP along $P^+$. 
\end{nt}

\begin{nexmpl}
Let $\theta$ be a continuous binary substitution of length  $r$. 
Then by Theorem \ref{Our_main_binary_case}, we have that the Ellis semigroup of $(X_\theta,s)$ has two minimal ideals with two idempotents each. 
Following the notation of Theorem \ref{Our_main_binary_case}, we denote the four minimal idempotents as $g_1, g_2, g_3, g_4$, where $g_1\sim g_3$, $g_2\sim g_4$, and $g_1$ and $g_2$ are in the same minimal ideal, as are $g_3$ and $g_4$. 

Since $g_1$ is an idempotent in $E(X_\theta)$, by \cite{Haddad}, $g_1$ is  an IP cluster point in $E(X_\theta)$. 
Then by Remark \ref{IPCPrmk2}, $g_1$ is also an IPCP along $\Z$ (since any IP sequence is contained in $\Z$). 

We now construct a generating set for $\Z$. 
Since $\theta$ is continuous, we may write $\theta(0)=a$, $\theta(1)=\bar{a}$, where $a=a_0a_1\ldots a_{r-1}$, and $a_0=0$. 
As in Proposition \ref{myLem1}, define $I:=\{m\in \{0,\ldots,r-1\}:a_m=1\}$. 
Also, recall the function $\N\rightarrow \{0,\ldots,r-1\}^{<\N}$ given by by $(k)_m=b_m$, 
where $k$ has base  $r$ expansion $k=b_0  r^l+b_1 r^{l-1}+\ldots b_{l-1} r+b_l$. 
Furthermore, let $j:=\min I$ and $p:=j r+j$, so $(p)_r=jj$. 
We take as generating set for $\Z$ the sequence given by 
$P:=\{m\in\Z:m<0\}\cup\{p r^{2m}:m\in\N\}$.

Then by Remark \ref{IPCPrmk1}, $g_1$ cannot be an IPCP along $P^-$. 
Moreover, by Lemma \ref{IPCPLem1}, $g_1$ also cannot be an IPCP along $P^+$. 
Therefore, the idempotent $g_1$ combined with the IP set $\Z$ generated by the sequence $P$ provide the necessary counterexample to Proposition \ref{HJerror}. 
\end{nexmpl}


\begin{ackn}
The final version of this paper was written thanks to the support of a London Mathematical Society grant number ECF-1819-33. 

My supervisor, Alex Clark, has been a constant source of useful suggestions and support throughout writing this article. I owe a debt to Reem Yassawi for her numerous suggestions for improvement of this article. I would also like to thank  Eli and Yair Glasner for bringing the work of Haddad and Johnson to my attention. 
Last but definitely not least, I would like to thank the half a dozen colleagues who have read through parts of this article, in particular Eli Glasner, Tom Ward and Jean-Paul Allouche, for their helpful comments. 
This script has benefitted significantly from their suggestions, while any potential errors are entirely due to the author's own omission. 
\end{ackn}

\bibliography{bibliography.bib}
\bibliographystyle{alpha}

\end{document}